\documentclass[12pt,paper]{amsart}
\usepackage{amsmath}
\usepackage{mathrsfs}
\usepackage{amsfonts}
\usepackage{amssymb}
\usepackage[all,cmtip]{xy}           
\usepackage{bbm}
\usepackage{bbding}
\usepackage{txfonts}
\usepackage{changes}
\usepackage[shortlabels]{enumitem}
\usepackage{ifpdf}
\ifpdf
\usepackage[colorlinks,final,backref=page,hyperindex]{hyperref}
\else
\usepackage[colorlinks,final,backref=page,hyperindex,hypertex]{hyperref}
\fi
\usepackage{amscd}
\usepackage{tikz-cd}
\usepackage[active]{srcltx}
\usepackage{tikz}
\usepackage{ulem}
\usetikzlibrary{calc}
\usetikzlibrary{arrows,shapes,chains}

\topmargin -.8cm \textheight 22.8cm \oddsidemargin 0cm \evensidemargin -0cm \textwidth 16.3cm

\makeatletter

\newtheorem{thm}{Theorem}[section]
\newtheorem{prop}[thm]{Proposition}

\newtheorem{cor}[thm]{Corollary}
\newtheorem{prop-def}{Proposition-Definition}[section]

\theoremstyle{definition}
\newtheorem{defn}[thm]{Definition}

\newtheorem{remark}[thm]{Remark}

\newcommand{\nc}{\newcommand}


\nc{\delete}[1]{{}}
\nc{\mmargin}[1]{}
\nc{\Alg}{\mathrm{Alg}}
\nc{\NjO}{\mathrm{\mathrm{RB\Omega O}_\lambda}}
\nc{\RBF}{\mathrm{RBAF}}
\nc{\rmH}{\mathrm{H}}
\nc{\DT}{\mathrm{DT}}
\nc{\C}{\mathrm{C}}

\nc{\RBO}{{\mathrm{RBO}_\lambda}}
\nc{\Sh}{\mathrm{Sh}}
\nc{\RBA}{{\mathrm{RBA}_\lambda}}
\nc{\sgn}{\mathrm{sgn}}

\delete{
\nc{\mlabel}[1]{\label{#1}}  
\nc{\mcite}[1]{\cite{#1}}  
\nc{\mref}[1]{\ref{#1}}  
\nc{\bibitem}[1]{\bibitem{#1}} 
}

	\nc{\mlabel}[1]{\label{#1}  
		{\hfill \hspace{1cm}{\bf{{\ }\hfill(#1)}}}}
	\nc{\mcite}[1]{\cite{#1}{{\bf{{\ }(#1)}}}}  
	\nc{\mref}[1]{\ref{#1}{{\bf{{\ }(#1)}}}}  
	\nc{\mbibitem}[1]{\bibitem[\bf #1]{#1}} 
	


 \font\cyrs=wncyr7




\nc{\vep}{\varepsilon}
\nc{\bin}[2]{ (_{\stackrel{\scs{#1}}{\scs{#2}}})}  
\nc{\binc}[2]{(\!\! \begin{array}{c} \scs{#1}\\
		\scs{#2} \end{array}\!\!)}  
\nc{\bincc}[2]{  ( {\scs{#1} \atop
		\vspace{-1cm}\scs{#2}} )}  
\nc{\oline}[1]{\overline{#1}}
\nc{\mapm}[1]{\lfloor\!|{#1}|\!\rfloor}
\nc{\bs}{\bar{S}}
\nc{\la}{\longrightarrow}
\nc{\ot}{\otimes}
\nc{\rar}{\rightarrow}
\nc{\lon }{\,\rightarrow\,}
\nc{\dar}{\downarrow}
\nc{\dap}[1]{\downarrow \rlap{$\scriptstyle{#1}$}}
\nc{\defeq}{\stackrel{\rm def}{=}}
\nc{\dis}[1]{\displaystyle{#1}}
\nc{\dotcup}{\ \displaystyle{\bigcup^\bullet}\ }
\nc{\hcm}{\ \hat{,}\ }
\nc{\hts}{\hat{\otimes}}
\nc{\hcirc}{\hat{\circ}}
\nc{\lleft}{[}
\nc{\lright}{]}
\nc{\curlyl}{\left \{ \begin{array}{c} {} \\ {} \end{array}
	\right .  \!\!\!\!\!\!\!}
\nc{\curlyr}{ \!\!\!\!\!\!\!
	\left . \begin{array}{c} {} \\ {} \end{array}
	\right \} }
\nc{\longmid}{\left | \begin{array}{c} {} \\ {} \end{array}
	\right . \!\!\!\!\!\!\!}
\nc{\ora}[1]{\stackrel{#1}{\rar}}
\nc{\ola}[1]{\stackrel{#1}{\la}}
\nc{\scs}[1]{\scriptstyle{#1}} \nc{\mrm}[1]{{\rm #1}}
\nc{\dirlim}{\displaystyle{\lim_{\longrightarrow}}\,}
\nc{\invlim}{\displaystyle{\lim_{\longleftarrow}}\,}
\nc{\dislim}[1]{\displaystyle{\lim_{#1}}} \nc{\colim}{\mrm{colim}}
\nc{\mvp}{\vspace{0.3cm}} \nc{\tk}{^{(k)}} \nc{\tp}{^\prime}
\nc{\ttp}{^{\prime\prime}} \nc{\svp}{\vspace{2cm}}
\nc{\vp}{\vspace{8cm}}
\nc{\modg}[1]{\!<\!\!{#1}\!\!>}
\nc{\intg}[1]{F_C(#1)}
\nc{\lmodg}{\!<\!\!}
\nc{\rmodg}{\!\!>\!}
\nc{\cpi}{\widehat{\Pi}}
\nc{\ssha}{{\mbox{\cyrs X}}} 
\nc{\tsha}{{\mbox{\cyrt X}}}
\nc{\shpr}{\diamond}    
\nc{\labs}{\mid\!}
\nc{\rabs}{\!\mid}


\nc{\ad}{\mrm{ad}}
\nc{\ann}{\mrm{ann}}
\nc{\Aut}{\mrm{Aut}}
\nc{\bim}{\mbox{-}\mathsf{Bimod}}
\nc{\br}{\mrm{bre}}
\nc{\can}{\mrm{can}}
\nc{\Cont}{\mrm{Cont}}
\nc{\rchar}{\mrm{char}}
\nc{\cok}{\mrm{coker}}
\nc{\de}{\mrm{dep}}
\nc{\dtf}{{R-{\rm tf}}}
\nc{\dtor}{{R-{\rm tor}}}

\nc{\Div}{{\mrm Div}}
\nc{\Diff}{\mrm{DA}}
\nc{\Diffl}{\mathsf{DA}_\lambda}
\nc{\diffo}{{\mathsf{DO}_\lambda}}
\nc{\alg}{\mathsf{Alg}}
\nc{\End}{\mrm{End}}
\nc{\Ext}{\mrm{Ext}}
\nc{\Fil}{\mrm{Fil}}
\nc{\Fr}{\mrm{Fr}}
\nc{\Frob}{\mrm{Frob}}
\nc{\Gal}{\mrm{Gal}}
\nc{\GL}{\mrm{GL}}
\nc{\Hom}{\mrm{Hom_\Omega}}
\nc{\Hoch}{\mrm{Hoch}}

\nc{\hsr}{\mrm{H}}
\nc{\hpol}{\mrm{HP}}
\nc{\id}{\mrm{Id}}
\nc{\im}{\mrm{im}}
\nc{\Id}{\mrm{Id}}
\nc{\ID}{\mrm{ID}}
\nc{\Irr}{\mrm{Irr}}
\nc{\incl}{\mrm{incl}}
\nc{\length}{\mrm{length}}
\nc{\NLSW}{\mrm{NLSW}}
\nc{\Lie}{\mrm{Lie}}
\nc{\mchar}{\rm char}
\nc{\mpart}{\mrm{part}}
\nc{\ql}{{\QQ_\ell}}
\nc{\qp}{{\QQ_p}}
\nc{\rank}{\mrm{rank}}
\nc{\rcot}{\mrm{cot}}
\nc{\rdef}{\mrm{def}}
\nc{\rdiv}{{\rm div}}
\nc{\rtf}{{\rm tf}}
\nc{\rtor}{{\rm tor}}
\nc{\res}{\mrm{res}}
\nc{\SL}{\mrm{SL}}
\nc{\Spec}{\mrm{Spec}}
\nc{\tor}{\mrm{tor}}
\nc{\Tr}{\mrm{Tr}}
\nc{\tr}{\mrm{tr}}
\nc{\wt}{\mrm{wt}}
\def\ot{\otimes}
\nc\blue{\color{blue}}


\nc{\bfk}{{\bf k}}
\nc{\bfone}{{\bf 1}}
\nc{\bfzero}{{\bf 0}}
\nc{\detail}{\marginpar{\bf More detail}
	\noindent{\bf Need more detail!}
	\svp}
\nc{\gap}{\marginpar{\bf Incomplete}\noindent{\bf Incomplete!!}
	\svp}
\nc{\FMod}{\mathbf{FMod}}
\nc{\Int}{\mathbf{Int}}
\nc{\Mon}{\mathbf{Mon}}
 \nc{\sproof}{\noindent{  \textit{Sketch of Proof:} }}
\nc{\remarks}{\noindent{\bf Remarks: }}
\nc{\Rep}{\mathbf{Rep}}
\nc{\Rings}{\mathbf{Rings}}
\nc{\Sets}{\mathbf{Sets}}
\nc{\ob}{\mathsf{Ob}}
\nc{\cal}{\mathcal}

\nc{\BA}{{\mathbb A}}   \nc{\CC}{{\mathbb C}}
\nc{\DD}{{\mathbb D}}   \nc{\EE}{{\mathbb E}}
\nc{\FF}{{\mathbb F}}   \nc{\GG}{{\mathbb G}}
    \nc{\LL}{{\mathbb L}}
\nc{\NN}{{\mathbb N}}   \nc{\PP}{{\mathbb P}}
\nc{\QQ}{{\mathbb Q}}   \nc{\RR}{{\mathbb R}}
\nc{\TT}{{\mathbb T}}   \nc{\VV}{{\mathbb V}}
\nc{\ZZ}{{\mathbb Z}}   \nc{\TP}{\widetilde{P}}
\nc{\m}{{\mathbbm m}}


\nc{\cala}{{\mathcal A}}    \nc{\calc}{{\mathcal C}}
\nc{\cald}{\mathcal{D}}     \nc{\cale}{{\mathcal E}}
\nc{\calf}{{\mathcal F}}    \nc{\calg}{{\mathcal G}}
\nc{\calh}{{\mathcal H}}    \nc{\cali}{{\mathcal I}}
\nc{\call}{{\mathcal L}}    \nc{\calm}{{\mathcal M}}
\nc{\caln}{{\mathcal N}}    \nc{\calo}{{\mathcal O}}
\nc{\calp}{{\mathcal P}}    \nc{\calr}{{\mathcal R}}
\nc{\cals}{{\mathcal S}}    \nc{\calt}{{\Omega}}
\nc{\calv}{{\mathcal V}}    \nc{\calw}{{\mathcal W}}
\nc{\calx}{{\mathcal X}}

\nc{\fraka}{{\mathfrak a}}
\nc{\frakb}{\mathfrak{b}}
\nc{\frakg}{{\frak g}}
\nc{\frakl}{{\frak l}}
\nc{\fraks}{{\frak s}}
\nc{\frakB}{{\frak B}}
\nc{\frakm}{{\frak m}}
\nc{\frakM}{{\frak M}}
\nc{\frakp}{{\frak p}}
\nc{\frakW}{{\frak W}}
\nc{\frakX}{{\frak X}}
\nc{\frakS}{{\frak S}}
\nc{\frakA}{{\frak A}}
\nc{\frakC}{{\frak C}}
\nc{\frakx}{{\frakx}}
\nc{\red}{\color{red}}
\nc{\RB}{\mathfrak{RB}}

\nc{\lbar}[1]{\overline{#1}}
\nc{\ra}{\rightarrow}
\nc{\hm}{\mrm{Hom}}

\nc{\tred}[1]{\textcolor{red}{#1}} \nc{\tgreen}[1]{\textcolor{green}{#1}}
\nc{\tblue}[1]{\textcolor{blue}{#1}} \nc{\tpurple}[1]{\textcolor{purple}{#1}}

\nc{\yuan}[1]{\tred{\underline{Yuan:}#1 }}
\nc{\kai}[1]{\tblue{\underline{kai:}#1 }}
\nc{\chao}[1]{\tblue{\underline{Chao:}#1 }}
\begin{document}

\title[Deformations and cohomology theory of $\Omega$-family Rota-Baxter algebras]{Deformations and cohomology theory of $\Omega$-family Rota-Baxter algebras of arbitrary weight}

\author{Chao Song}
\address{School of Mathematical Sciences, Shanghai Key Laboratory of PMMP, East China Normal University, Shanghai 200241, China}
\email{52265500011@stu.ecnu.edu.cn}

\author{Kai Wang}
\address{School of Mathematical Sciences, Shanghai Key Laboratory of PMMP, East China Normal University, Shanghai 200241, China}
\email{wangkai@math.ecnu.edu.cn }

\author{Yuanyuan Zhang$^{*}$}\thanks{*Corresponding author}
\address{School of Mathematics and Statistics, Henan University, Henan, Kaifeng 475004, China}
\email{zhangyy17@henu.edu.cn}
%


%

\date{\today}

\begin{abstract}  In this paper, we firstly construct an $L_\infty[1]$-algebra via the method of higher derived brackets, whose Maurer-Cartan elements correspond to  relative $\Omega$-family Rota-Baxter algebras structures of weight $\lambda$. For a relative $\Omega$-family Rota-Baxter algebra of weight $\lambda$, the corresponding twisted $ L_{\infty}[1] $-algebra controls its deformations, which leads to the cohomology theory of relative $\Omega$-family Rota-Baxter algebras of weight $\lambda$. 	Moreover, we also obtain the corresponding results for absolute $\Omega$-family Rota-Baxter algebras of weight $\lambda$  from the relative version.
At last, we study formal deformations of  relative (resp. absolute) $\Omega$-family Rota-Baxter algebras of weight $\lambda$, which can be explained by the lower degree cohomology groups.
\end{abstract}

\subjclass[2020]{
16E40   
16S80   
17B38 
}

\keywords{$L_{\infty}$[1]-algebra, cohomology,
	formal deformation, $\Omega$-family Rota-Baxter algebra, $\Omega$-associative algebra}

\maketitle

 \tableofcontents

\allowdisplaybreaks

\section{Introduction}\
%
{\bf Background:}
Rota-Baxter algebra was introduced by Baxter~\cite{Bax} in 1960 in his study of fluctuation theory in probability. Later Baxter's work was further explored from different angles by Rota~\cite{Rot69a, Rot69b, Rota95}, Cartier~\cite{Car} and Atkinson~\cite{Atk63} among others in the 1960-70s.
In 1980s,  Semenov-Tian-Shansky~\cite{STS} studied classical Yang-Baxter equation (CYBE) systematically and he proved that a Rota-Baxter operator of weight $0$ on a Lie algebra is exactly the operator form of a skew-symmetric solution of CYBE. Later, Kupershmidt~\cite{Kup} introduced the notion of $\mathcal{O}$-operator (so-called relative Rota-Baxter operator later) on Lie algebras and built the connections between solutions of generalized CYBE and $\mathcal{O}$-operators. Relative Rota-Baxter operators on Lie algebras also  provide solutions of the classical Yang-Baxter equation in the semidirect product Lie algebra and give rise to pre-Lie algebras~\cite{Bai}. In the context of associative algebras, Aguiar~\cite{Aguiar20} established the connection between solutions of associative Yang-Baxter equation and Rota-Baxter operators of weight $0$ on associative algebras.  Uchino~\cite{Uchino08} introduced the notion of relative Rota-Baxter operators on associative algebras . Bai, Guo and Ni~\cite{BGN} built the correspondence between solutions of extended Yang-Baxter equation and relative Rota-Baxter operators on associative algebras. Nowadays, Rota-Baxter algebras have numerous connections and applications to many areas of mathematics, including combinatorics~\cite{Rot69a, Rot69b, Rota95, Spitzer}, operad theory~\cite{BBG}, quantum field theory~\cite{CK20,EGK,GPZ}, pre-Lie and pre-Poisson algebras~\cite{Aguiar20,AB08}, Loday's dendriform algebras~\cite{EG08, EM08, Guo12, LR04}, shuffle algebras~\cite{GK20} etc.

\smallskip
The concept of algebras with multiple linear operators (also called $\Omega$-algebra) was first introduced by Kurosch in ~\cite{Kur}. The first example of this situation appeared in 2007 in a paper by  Ebrahimi-Fard, Gracia-Bondia and Patras \cite[Proposition~9.1]{FBP} (see also \cite[Theorem 3.7.2]{DK}) about algebraic aspects of renormalization in Quantum Field Theory, where a ``Rota-Baxter family" appears: this terminology was suggested to the authors by  Guo (see Footnote following Proposition 9.2 therein), who further discussed the underlying structure under the name \textsl{Rota-Baxter Family algebra} in \cite{Guo09}. Various other kinds of family algebraic structures have been recently defined \cite{Foi20, ZG, ZGM, ZGM23, ZM}. Let $\Omega$ be a semigroup and $\lambda\in \bfk$ be given.
A Rota-Baxter family of weight $\lambda$ on an associative algebra $A$ is a collection of linear operators $T = \{T_\omega\}_{\omega\in\Omega}:A\ra A$ such that
\begin{equation*}
T_{\alpha}(a)T_{\beta}(b)=T_{\alpha\beta}\big( T_{\alpha}(a)b  + a T_{\beta}(b) + \lambda ab \big),\, \text{ for }\, a, b \in A\,\text{ and }\, \alpha,\, \beta \in \Omega.
\end{equation*}
Then $(A, \mu, T=\{T_\omega\}_{\omega\in\Omega})$ is called a Rota-Baxter family algebra of weight $\lambda$.
 The concept of Rota-Baxter family algebra is a generalization of Rota-Baxter algebras~\cite{Bax}. Recently, many scholars have begun to pay attention to families algebraic structures, such as Foissy~\cite{Foi18, Foi20, FP}, Gao, Guo, Manchon and Zhang~\cite{GGZ21,ZGG,ZGM, ZG, ZGM23, ZM}, Aguiar~\cite{Agu20}, Das~\cite{Das} and so on.

\smallskip
The formal deformation theory of algebraic structures was first developed for associative algebras in the classical work of Gerstenhaber~\cite{Ger1963,Ger}, which is closely related to the cohomology theory of Hochschild cohomology on associative algebras~\cite{Hoch}. In~\cite{Ger}, Gerstenhaber showed that the deformation is governed by the Hochschild cohomology.
 Recently, Lazarev, Sheng and Tang~\cite{LST} defined the cohomology of relative Rota–Baxter Lie algebras and related it to their infinitesimal deformations.
 Tang, Bai, Guo and Sheng~\cite{RBGS} developed the deformation theory and cohomology theory of $\cal{O}$-operators (also called relative Rota-Baxter operator of weight $0$) on Lie algebras. Das~\cite{Das1} developed the corresponding cohomology theory for Rota-Baxter associative algebra of weight $0$. Wang and Zhou~\cite{WZ1, WZ2} defined the cohomology theory for Rota-Baxter associative algebras of weight $\lambda$, determined the underlying $L_\infty$-algebra and also showed that the corresponding dg operad of homotopy Rota-Baxter associative algebras of weight $\lambda$ is the minimal model of that of Rota-Baxter associative algebras of weight $\lambda$. Das~\cite{Das} defined the cohomology of twisted $\cal{O}$-operator family and NS-family algebras (in particular cohomology of Rota-Baxter family and dendriform family algebras) that govern their deformations. Recently, Das~\cite{Das22} also studied deformations and homotopy theory for Rota-Baxter family algebras of weight $0$.


{\bf Outline of the paper:} In this paper, we first construct an $L_\infty[1]$-structure by higher derived brackets and the cohomology theory of relative $\Omega$-family Rota-Baxter algebras of weight $\lambda$. As an application, we can immediately obtain the corresponding theory for the absolute version of this theory.
This paper is organized as follows. In Section~\ref{sec:relative cohomology},
we first construct an $L_\infty[1]$-algebra by higher derived brackets whose Maurer-Cartan elements correspond to relative $\Omega$-family Rota-Baxter algebras structures of weight $\lambda$, then we give the cohomology theory of relative $\Omega$-family Rota-Baxter algebras of weight $\lambda$ by this characterization. In Section~\ref {sec:abosulte cohomology}, applying the results in Section \ref{sec:relative cohomology}, we obtain the $L_\infty[1]$-structure and the cohomology theory of absolute $\Omega$-family Rota-Baxter algebras of weight $\lambda$.
In Section~\ref{sec:formal deformations}, we study formal deformations of relative (resp. absolute) $\Omega$-family Rota-Baxter algebras of weight $\lambda$ and they can be interpreted by lower degree cohomology groups.
\subsection{Notations and conventions}\

Throughout this paper, let $\mathbf{k}$ be a field of characteristic 0. All vector spaces are defined over $\mathbf{k}$, all tensor products and Hom-spaces are taken over $\mathbf{k}$ and we always assume that $\Omega$ is a semigroup.

A graded vector space is a family of vector spaces $V=\left\{V_n\right\}_{n \in \mathbb{Z}}$ indexed by integers. For any $n\in \mathbb{Z}$, an element  $ v\in V_n$ is called  homogeneous of degree $n$, and it is written as $|v|=n$.

For, $n \in \mathbb{N}^+$, let $\mathbb{S}_{n}$ denote the symmetric group in $n$ variables. For $0 \leqslant i_1, \ldots, i_r \leqslant n$ with $i_1+\dots+i_r=n, \operatorname{Sh}\left(i_1, i_2, \ldots, i_r\right)$ is the set of $\left(i_1, \ldots, i_r\right)$-shuffles, i.e., those permutations $\sigma \in \mathbb{S}_{n}$ such that
$$
\sigma(1)<\sigma(2)<\dots<\sigma\left(i_1\right), \sigma\left(i_1+1\right)<\dots<\sigma\left(i_1+i_2\right), \ldots, \sigma\left(i_{r-1}+1\right)<\dots<\sigma(n).
$$

Let $V$ be a graded vector space. Define the graded symmetric algebra $S(V)$ of $V$ to be $T(V) / I$ where $T(V)$ is the tensor algebra and $I$ is a two-sided ideal of $T(V)$ generated by $x \otimes y-(-1)^{|x| |y|} y \otimes x$ for all homogeneous elements $x, y \in V$. For $x_1 \otimes \dots \otimes x_n \in T(V)$, write $x_1 \odot \dots \odot x_n$ to be the corresponding element in $S(V)$. The degree of $x_1 \odot \dots \odot x_n$ is the sum of the degrees of $x_i$.
For homogeneous elements $x_1, \ldots, x_n \in V$ and $\sigma \in \mathbb{S}_n$, the Koszul sign  $\varepsilon\left(\sigma ; x_1, \ldots, x_n\right)$ is defined by
$$
x_1 \odot x_2 \odot \dots \odot x_n=\varepsilon\left(\sigma ; x_1, \ldots, x_n\right) x_{\sigma(1)} \odot x_{\sigma(2)} \odot \dots \odot x_{\sigma(n)} \in S(V).
$$

The suspension operator $s$ changes the grading of $V$ according to the rule $(s V)^i:=V^{i-1}$. The degree $1$ map $s: V \rightarrow sV$ is defined by sending $v \in V$ to its copy $s v \in s V$. The desuspension operator $s^{-1}$ changes the grading of $V$ according to the rule $(s^{-1} V)^i:=V^{i+1}$. The degree $-1$ map $s^{-1}: V \rightarrow s^{-1} V$ is defined by sending $v \in V$ to its copy $s^{-1} v \in s^{-1} V$.
\subsection{$L_{\infty}[1]$-algebras and higher derived brackets}\

In this subsection, we mainly recall the definitions of $L_{\infty}[1]$-algebras and higher derived brackets from\cite{Stasheff, LS, LM, RBGS}.

\begin{defn}\label{L infty def}
	An $L_{\infty}[1]$-algebra is a graded vector space $\mathfrak{L}=\bigoplus\limits_{i \in \mathbb{Z}} \mathfrak{L}^i$ endowed with a family of graded linear maps $l_n: \mathfrak{L}^{\otimes n} \rightarrow \mathfrak{L}, n \geqslant 1$ of degree 1 satisfying the following equations: for arbitrary $n \geqslant 1$ and $x_1, \ldots, x_n \in \mathfrak{L}$,
	\begin{itemize}
		\item [(i)] (graded symmetry)
		\begin{equation*}\label{L infty 1}
			l_n\left(x_{\sigma(1)}, \ldots, x_{\sigma(n)}\right)=\varepsilon(\sigma) l_n\left(x_1, \ldots, x_n\right), \forall \sigma \in \mathbb{S}_n ;
		\end{equation*}
		\item [(ii)] (generalised Jacobi identity)
		\begin{equation*}\label{L infty 2}
			\sum_{i=1}^n \sum_{\sigma \in \operatorname{Sh}(i, n-i)} \varepsilon(\sigma) l_{n-i+1}\left(l_i\left(x_{\sigma(1)}, \ldots, x_{\sigma(i)}\right), x_{\sigma(i+1)}, \ldots, x_{\sigma(n)}\right)=0 .
		\end{equation*}
		where $\operatorname{Sh}(i, n-i)$ is the set of $(i, n-i)$ shuffles.
	\end{itemize}
\end{defn}

\begin{defn}\label{L infty MC equation}
	A Maurer-Cartan element of an $L_{\infty}[1]$-algebra $\left(\mathfrak{L},\left\{l_n\right\}_{n \geqslant 1}\right)$ is an element $\alpha \in \mathfrak{L}^{0} $ satisfying the Maurer-Cartan equation:
	$$
	\sum_{n=1}^{\infty} \frac{1}{n !} l_n\left(\alpha^{\otimes n}\right)=0,
	$$
	whenever this infinite sum exists.
\end{defn}

\begin{prop}[Twisting procedure]\label{twisted L infty}
	Let $\alpha$ be a Maurer-Cartan element of $L_{\infty}[1]$-algebra $\mathfrak{L}$. The twisted $L_{\infty}[1]$-algebra is given by $l_n^\alpha: \mathfrak{L}^{\otimes n} \rightarrow \mathfrak{L}$ which is defined as follows:
	\[
	l_n^\alpha\big(x_1 \otimes \dots \otimes x_n\big)=\sum_{i=0}^{\infty} \frac{1}{i !} l_{n+i}\big(\alpha^{\otimes i} \otimes x_1 \otimes \dots \otimes x_n\big), \forall x_1, \ldots, x_n \in \mathfrak{L},
	\]
	whenever these infinite sums exist. Moreover, if $ \alpha + \widehat{\alpha} $ is a Maurer-Cartan element of $\left(\mathfrak{L},\left\{l_n\right\}_{n \geqslant 1}\right)$, then $ \widehat{\alpha} $ is a Maurer-Cartan element of $\left(\mathfrak{L},\left\{l_n^\alpha \right\}_{n \geqslant 1}\right)$.
\end{prop}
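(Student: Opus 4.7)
The plan is to verify separately that the $\{l_n^\alpha\}$ satisfy graded symmetry and the generalized Jacobi identity, and then derive the Maurer-Cartan correspondence by a direct reorganization of sums. Graded symmetry of $l_n^\alpha$ is immediate from the graded symmetry of the original brackets $l_{n+i}$ applied to $\alpha^{\otimes i}\otimes x_1\otimes\dots\otimes x_n$: permuting only the $x_j$ leaves the $\alpha$-block untouched and picks up the Koszul sign $\varepsilon(\sigma;x_1,\dots,x_n)$, as required.

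The main calculation is the generalized Jacobi identity for $\{l_n^\alpha\}$. First I would expand
\[
\sum_{k=1}^{n}\sum_{\sigma\in\mathrm{Sh}(k,n-k)}\varepsilon(\sigma)\, l_{n-k+1}^{\alpha}\bigl(l_k^{\alpha}(x_{\sigma(1)},\dots,x_{\sigma(k)}),x_{\sigma(k+1)},\dots,x_{\sigma(n)}\bigr)
\]
by using the definition of $l^\alpha$ twice, producing a double series with factors $\tfrac{1}{i!\,j!}$ indexed by the numbers $i,j\geq 0$ of inserted $\alpha$'s on the inner and outer brackets. After collecting terms with $i+j=m$ fixed and using that the $\alpha$'s on the outer slot are mutually symmetric (so one may convert $\tfrac{1}{i!\,j!}$ into $\tfrac{1}{m!}\binom{m}{i}$ and sum over shuffles of the $m$ $\alpha$'s with the $x$'s), the entire expression regroups, for each $m\geq 0$, into $\tfrac{1}{m!}$ times the generalized Jacobi sum of $\{l_n\}$ evaluated on $\alpha^{\otimes m}\otimes x_1\otimes\dots\otimes x_n$. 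Each such inner sum vanishes by the hypothesis that $\{l_n\}$ is an $L_\infty[1]$-algebra, so the total sum is zero. This step is the main obstacle: one must carefully track Koszul signs and verify that the combinatorial identity $\sum_{i+j=m}\binom{m}{i}(\text{Sh}(i,j)\text{-terms})$ recovers exactly $\mathrm{Sh}(k+i,n-k+j)$-shuffles acting on $\alpha^{\otimes m}\otimes x_1\otimes\dots\otimes x_n$ restricted so that the $k+i$-block contains all $i$ of the ``inner'' $\alpha$'s.

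For the Maurer-Cartan statement, I would expand using the graded symmetry of $l_m$ and the (formal) binomial identity
\[
l_m\bigl((\alpha+\widehat{\alpha})^{\otimes m}\bigr)=\sum_{i+n=m}\binom{m}{i}\, l_m\bigl(\alpha^{\otimes i}\otimes\widehat{\alpha}^{\otimes n}\bigr),
\]
so that
\[
\sum_{m\geq 1}\frac{1}{m!}\, l_m\bigl((\alpha+\widehat{\alpha})^{\otimes m}\bigr)=\sum_{i,n\geq 0,\, i+n\geq 1}\frac{1}{i!\,n!}\, l_{i+n}\bigl(\alpha^{\otimes i}\otimes\widehat{\alpha}^{\otimes n}\bigr).
\]
Separating the $n=0$ part gives $\sum_{i\geq 1}\tfrac{1}{i!}\, l_i(\alpha^{\otimes i})=0$ because $\alpha$ is Maurer-Cartan for $\{l_n\}$, and the remaining $n\geq 1$ portion is exactly $\sum_{n\geq 1}\tfrac{1}{n!}\, l_n^{\alpha}(\widehat{\alpha}^{\otimes n})$ by definition of $l_n^\alpha$. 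Hence $\alpha+\widehat{\alpha}$ satisfies the Maurer-Cartan equation for $\{l_n\}$ if and only if $\widehat{\alpha}$ satisfies it for $\{l_n^\alpha\}$, completing the proposition.

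Throughout, the only subtlety is the convergence caveat already flagged by the phrase ``whenever these infinite sums exist''; once one works formally (or in a complete filtered setting, which is the relevant context later in the paper for higher derived brackets), the manipulations above are term-by-term identities and no analytic issue arises.
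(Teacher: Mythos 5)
The paper states this proposition without proof (it is recalled from the literature on twisting procedures), so your argument has to stand on its own. Your treatment of graded symmetry and of the Maurer--Cartan correspondence in the final paragraph is correct: since $\alpha$ and $\widehat{\alpha}$ both have degree $0$, the binomial regrouping $\sum_{m\geq 1}\tfrac{1}{m!}l_m\big((\alpha+\widehat{\alpha})^{\otimes m}\big)=\sum_{i,n\geq 0,\,i+n\geq 1}\tfrac{1}{i!\,n!}l_{i+n}\big(\alpha^{\otimes i}\otimes\widehat{\alpha}^{\otimes n}\big)$ carries no Koszul signs, and splitting off the $n=0$ part (which vanishes by the Maurer--Cartan equation for $\alpha$) leaves exactly $\sum_{n\geq 1}\tfrac{1}{n!}l_n^{\alpha}(\widehat{\alpha}^{\otimes n})$.

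However, your verification of the generalized Jacobi identity for $\{l_n^{\alpha}\}$ has a genuine gap: as written it never uses the hypothesis that $\alpha$ is a Maurer--Cartan element, and that hypothesis is indispensable there. In the twisted Jacobi sum the inner bracket is $l_k^{\alpha}$ with $k\geq 1$, so after expansion every inner block of an original bracket contains at least one of the $x_j$. Consequently your regrouping produces, for each $m$, only the shuffle terms of the generalized Jacobi identity of $\{l_n\}$ on $\alpha^{\otimes m}\otimes x_1\otimes\cdots\otimes x_n$ whose inner block meets $\{x_1,\dots,x_n\}$; it does \emph{not} produce the terms $\sum_{i=1}^{m}\binom{m}{i}\,l_{m-i+n+1}\big(l_i(\alpha^{\otimes i})\otimes\alpha^{\otimes m-i}\otimes x_1\otimes\cdots\otimes x_n\big)$ in which the inner bracket is applied to $\alpha$'s alone. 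So the claimed identity ``twisted Jacobi sum $=\sum_{m}\tfrac{1}{m!}\times$ full Jacobi sum'' is false; the two sides differ by $\sum_{j\geq 0}\tfrac{1}{j!}\,l_{j+n+1}\big(\big(\sum_{i\geq 1}\tfrac{1}{i!}l_i(\alpha^{\otimes i})\big)\otimes\alpha^{\otimes j}\otimes x_1\otimes\cdots\otimes x_n\big)$, and it is precisely the Maurer--Cartan equation that kills this discrepancy. That the MC equation cannot be dispensed with is already visible in the dg Lie case: there $l_1^{\alpha}=l_1+l_2(\alpha,-)$ and $(l_1^{\alpha})^2(x)=l_2\big(l_1(\alpha)+\tfrac{1}{2}l_2(\alpha,\alpha),\,x\big)$, which vanishes only for Maurer--Cartan $\alpha$. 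Your sketch should be amended to isolate these pure-$\alpha$ inner-block terms and dispose of them via the Maurer--Cartan equation; with that correction the argument goes through.
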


\begin{defn}
	A $V$-data consists of a quadruple $(\mathfrak{L}, \mathfrak{a}, P, \Delta)$ where
	\begin{itemize}
		\item [(i)]$(\mathfrak{L},[-, -])$ is a graded Lie algebra;
		\item [(ii)]$\mathfrak{a}$ is an abelian graded Lie subalgebra of $(\mathfrak{L},[-, -])$;
		\item [(iii)]$P: \mathfrak{L} \rightarrow \mathfrak{L}$ is a projection, that is $P \circ P=P$, whose image is $\mathfrak{a}$ and kernel is a graded Lie subalgebra of $(\mathfrak{L},[-, -])$;
		\item [(iv)]$\Delta$ is an element in $\operatorname{ker}(P)$ with degree $ |\Delta| = 1 $ such that $[\Delta, \Delta]=0$.
	\end{itemize}
\end{defn}

\begin{thm}\label{thm:derived bracket}
	Let $(\mathfrak{L}, \mathfrak{a}, P, \Delta)$ be a $V$-data. Then the graded vector space $s^{-1} \mathfrak{L} \oplus \mathfrak{a}$ is an $L_{\infty}[1]$-algebra, in which
	\[
	\begin{aligned}
		l_1\big(s^{-1} x, a\big) & =\big(-s^{-1}[\Delta, x], P(x+[\Delta, a])\big); \\
		l_2\big(s^{-1} x, s^{-1} y\big) & =(-1)^x s^{-1}[x, y]; \\
		l_k\big(s^{-1} x, a_1, \dots, a_{k-1}\big) & =P\Big[\dots \big[[x, a_1], a_2 \big] \dots, a_{k-1}\Big],  k \geqslant 2; \\
		l_k\big(a_1, \dots, a_{k-1}, a_k\big) & =P\Big[\dots \big[[\Delta, a_1], a_2 \big] \dots, a_k\Big], k \geqslant 2 .
	\end{aligned}
	\]
	Here $x, y$ are homogeneous elements of $\mathfrak{L}$ and $a, a_1, \dots, a_k$ are homogeneous elements of $\mathfrak{a}$. All the other $L_{\infty}[1]$-algebra products that are not obtained from the ones written above by permutations of arguments, will vanish.
\end{thm}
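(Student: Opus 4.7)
The plan is to recognise this as Voronov's higher derived brackets construction, enriched by the auxiliary summand $s^{-1}\mathfrak{L}$, and to verify the $L_\infty[1]$-axioms of Definition~\ref{L infty def} directly. First I would verify the graded symmetry. For the purely $\mathfrak{a}$-valued brackets $l_k(a_1, \ldots, a_k) = P[\cdots[[\Delta, a_1], a_2], \cdots, a_k]$ with $k \geq 2$, graded symmetry reduces to showing that swapping two adjacent inner arguments $a_i, a_{i+1}$ in the iterated bracket produces only the Koszul sign; this follows from the graded Jacobi identity in $\mathfrak{L}$ together with $[a_i, a_{i+1}] = 0$, which is given since $\mathfrak{a}$ is abelian. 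The mixed brackets $l_k(s^{-1}x, a_1, \ldots, a_{k-1})$ inherit graded symmetry in the $a_i$-slots from the same computation, and the sign $(-1)^x$ in $l_2(s^{-1}x, s^{-1}y)$ is the standard suspension translation of the graded anti-symmetry of $[-,-]$ on $\mathfrak{L}$ into graded symmetry on $s^{-1}\mathfrak{L}$.

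For the generalised Jacobi identity, I would split the verification according to the number of $s^{-1}\mathfrak{L}$-arguments involved. The case with all arguments in $\mathfrak{a}$ is Voronov's classical derived-bracket computation: expanding the Jacobi sum, each summand can be rewritten as $P$ applied to an iterated bracket built from $\Delta$ and the $a_i$'s, and the cancellations follow from three ingredients, namely $[\Delta, \Delta] = 0$, the graded Jacobi identity in $\mathfrak{L}$, and the subalgebra property of $\ker(P)$ (which, combined with $\Delta \in \ker(P)$, allows us to discard any inner bracket whose entries lie in $\ker(P)$ once the outer $P$ is applied). The case with exactly one $s^{-1}x$-argument requires extra care: the contribution of $l_1$ on the $s^{-1}x$-slot splits into an $s^{-1}\mathfrak{L}$-component $-s^{-1}[\Delta, x]$ and an $\mathfrak{a}$-component $Px$, which must be tracked separately. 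The $s^{-1}\mathfrak{L}$-part of Jacobi collapses to expressions of the form $[\Delta, [\Delta, x]] = \tfrac{1}{2}[[\Delta, \Delta], x] = 0$, while the $\mathfrak{a}$-part reduces, after projection by $P$ and use of the subalgebra property of $\ker(P)$, to the same derived-bracket computation with $x$ playing the role of an extra inner argument. Finally, the case with two $s^{-1}$-arguments only involves $l_1$ and $l_2$ and collapses to the graded Jacobi identity in $\mathfrak{L}$ itself; cases with three or more $s^{-1}$-arguments are automatic since the corresponding $l_k$ are declared to vanish.

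The main obstacle I anticipate is the sign and suspension bookkeeping, particularly in the mixed case where Koszul signs of shuffles interact with the suspension sign $(-1)^x$ and with how the projection $P$ enters on each side of the Jacobi sum. A cleaner organisation would be to encode the family $\{l_k\}_{k \geq 1}$ via a single formal generating series: introducing a formal parameter of the appropriate parity and considering $P$ applied to iterated brackets of the form $[\Delta + (\text{formal argument}), \cdot]$ on $\mathfrak{a} \oplus s^{-1}\mathfrak{L}$, the generalised Jacobi identity becomes a single consequence of $[\Delta, \Delta] = 0$ together with the subalgebra property of $\ker(P)$, and the individual cases above appear as components in each total degree. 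This packaging has the additional virtue of making the interaction with the twisting procedure of Proposition~\ref{twisted L infty} transparent, which will be essential when interpreting Maurer-Cartan elements as relative $\Omega$-family Rota-Baxter algebra structures later in the paper.
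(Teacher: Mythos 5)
The paper itself offers no proof of this statement: it is recalled verbatim from Voronov's higher derived brackets construction (see \cite{Voronov}), so there is no in-paper argument to compare against. Your sketch is a reconstruction of Voronov's original proof, and the overall strategy is the right one: graded symmetry of the purely-$\mathfrak{a}$ and mixed brackets from the Jacobi identity together with $[\mathfrak{a},\mathfrak{a}]=0$, and the generalised Jacobi identity verified case by case according to the number of desuspended arguments, with the all-$\mathfrak{a}$ case reducing to the vanishing of the derived brackets of $\tfrac12[\Delta,\Delta]$.

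Two of your sub-claims are too loose to stand as written. First, the subalgebra property of $\ker(P)$ does \emph{not} let you ``discard any inner bracket whose entries lie in $\ker(P)$ once the outer $P$ is applied'': an element of $\ker(P)$ produced at an inner stage can leave $\ker(P)$ after further bracketing with elements of $\mathfrak{a}$, so only the \emph{outermost} such bracket is killed by $P$. What the subalgebra property actually supplies (given that $\mathfrak{a}$ is abelian) is the distributivity identity $P[u,v]=P[Pu,v]+P[u,Pv]$, and it is this identity that one uses to strip the inner projections and reassemble the Jacobiator as $P\big[\dots\big[[\tfrac12[\Delta,\Delta],a_1],a_2\big]\dots,a_n\big]$. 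Second, the case distinction at the end is misstated: with exactly two desuspended arguments and additional $\mathfrak{a}$-arguments present, higher mixed brackets $l_k$ with $k\geqslant 3$ do appear (the inner $l_2(s^{-1}x,s^{-1}y)$ feeds into $l_{n-1}(s^{-1}[x,y],a_1,\dots)$, and the $\mathfrak{a}$-component $Px$ of $l_1(s^{-1}x)$ feeds into $l_{n}(s^{-1}y,Px,a_1,\dots)$), so the identity is not just the Jacobi identity of $[-,-]$; and the case of exactly three desuspended arguments is not vacuous either, since the $l_2\circ l_2$ terms survive and must cancel by the graded Jacobi identity of $[-,-]$. These are bookkeeping corrections rather than conceptual gaps, and your closing suggestion of packaging the family $\{l_k\}$ into a single generating expression in $\Delta$ plus a formal argument is indeed a clean way to run the full verification.
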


\begin{remark}\label{rk:subalgebra}
	Let $\mathfrak{L}^{\prime}$ be a graded Lie subalgebra of $\mathfrak{L}$ that satisfies $\big[\Delta, \mathfrak{L}^{\prime}\big] \subset \mathfrak{L}^{\prime}$. Then $s^{-1} \mathfrak{L}^{\prime} \oplus \mathfrak{a}$ is an $L_{\infty}[1]$-subalgebra of the above $L_{\infty}[1]$-algebra $\big(s^{-1} \mathfrak{L} \oplus \mathfrak{a},\big\{l_k\big\}_{k=1}^{+\infty}\big)$.
\end{remark}
\subsection{$\Omega$-Gerstenhaber bracket and Hochschild cohomology of $\Omega$-associative algebras}\

In this subsection, we mainly recall some basic concepts from~\cite{Agu20, Ger1963, Das22}.

Let $V$ be any vector space. For any $ n \geqslant 1 $,
let $\operatorname{Hom}_{\Omega}\left(V^{\otimes n}, V\right)$ be the set whose elements are given by a family $f=\left\{f_{\alpha_1, \ldots, \alpha_n}: V^{\otimes n} \rightarrow V\right\}_{\alpha_1, \ldots, \alpha_n \in \Omega}$ of linear maps labelled by the elements of $\Omega^{\times n}$. For any $n, m_{1}, \dots, m_{n} \geqslant 1$, one can define a linear map
\begin{align*}
	\operatorname{Hom}_{\Omega}(V^{\otimes n}, V) \ \otimes \operatorname{Hom}_{\Omega}(V^{\otimes m_{1}}, V)  \ot \dots \otimes \operatorname{Hom}_{\Omega}(V^{\otimes m_{n}}, V) \rightarrow \operatorname{Hom}_{\Omega}(V^{\otimes m_{1} + \dots + m_{n}}, V)
\end{align*}
\[ f \ot g_{1} \ot \dots \ot g_{n} \mapsto f \circ (g_{1}, \dots, g_{n}) \]
given by
\begin{eqnarray*}
	f\circ (g_1\ot\ldots\ot g_n)
	&=&\{(f\circ (g_1\ot\ldots\ot g_n))
	_{\alpha_{i_{1}},  \dots,   \alpha _{i_{m_{1}}},
		\dots , \alpha_{j_{1}},  \dots ,  \alpha _{j_{m_{n}}} }
	\}
	_{\alpha_{i_{1}},  \dots,   \alpha _{i_{m_{1}}},
		\dots , \alpha_{j_{1}},  \dots ,  \alpha _{j_{m_{n}}} \in \Omega}\\
	&:=&\{f_{\alpha_{i_{1}}  \dots   \alpha _{i_{m_{1}}},
		\dots , \alpha_{j_{1}}  \dots   \alpha _{j_{m_{n}}}}
	\circ
	({g_1}_{\alpha_{i_{1}}, \dots , \alpha _{i_{m_{1}}}}
	\ot\ldots\ot
	{g_n}_{\alpha_{j_{1}}, \dots , \alpha _{j_{m_{n}}}})\}
	_{\alpha_{i_{1}},  \dots,   \alpha _{i_{m_{1}}},
		\dots , \alpha_{j_{1}},  \dots ,  \alpha_{j_{m_{n}}} \in \Omega}.
\end{eqnarray*}
More precisely,
\begin{align*}
	&(f\circ (g_1\ot\ldots\ot g_n))
	_{ \alpha_{i_{1}}, \dots, \alpha_{i_{m_{1}}},
		\dots, \alpha_{j_{1}}, \dots, \alpha_{j_{m_{n}}} }
	(v_{1}, \dots, v_{m_{1}+\dots+m_{n}})\\
	=& f
	_{ \alpha_{i_{1}} \dots \alpha_{i_{m_{1}}},
		\dots, \alpha_{j_{1}} \dots \alpha_{j_{m_{n}}} }
	\big(
	{g_{1}}_{\alpha_{i_{1}}, \dots, \alpha_{i_{m_{1}}}}(v_{1},\dots,v_{m_{1}}), \dots, {g_{n}}_{\alpha_{j_{1}}, \dots, \alpha_{j_{m_{n}}}}(v_{m_{1}+\dots+m_{n-1}+1},\dots,v_{m_{1}+\dots+m_{n}})
	\big)
\end{align*}
for $\alpha_{i_{1}}, \dots, \alpha_{i_{m_{1}}},
\dots, \alpha_{j_{1}}, \dots, \alpha_{j_{m_{n}}} \in \Omega$ and $v_1, \ldots, v_{m_{1}+\dots+m_{n}} \in V$.

For $ n , m \geqslant 1 $, $ 1 \leqslant i \leqslant n $, one can also define a linear map
\[  \operatorname{Hom}_{\Omega}(V^{\otimes n}, V) \ot  \operatorname{Hom}_{\Omega}(V^{\otimes m}, V)  \rightarrow  \operatorname{Hom}_{\Omega}(V^{\otimes n+m-1}, V)\]
\begin{eqnarray*}
	f \ot g \mapsto f\circ_{i} g
	:= f \circ (\id \ot \dots \ot \id \ot \underset{i}{g} \ot \dots \ot \id).
\end{eqnarray*}
In other words,
\begin{eqnarray*}
	f\circ_{i} g
	&=&\{(f\circ_{i} g)
	_{\alpha_{1},  \dots,   \alpha _{n+m-1}}
	\}
	_{\alpha_{1},  \dots,   \alpha _{n+m-1}  \in \Omega}\\
	&:=&
	\{ f_{\alpha_{1},  \dots, \alpha_{i-1}, \alpha_{i} \dots \alpha_{i+m-1}, \alpha_{i+m} \dots,  \alpha _{n+m-1}}
	\circ_{i} g_{\alpha_{i}, \dots, \alpha_{i+m-1}}
	\}
	_{\alpha_{1},  \dots,   \alpha _{n+m-1} \in \Omega},
\end{eqnarray*}
specifically,
\begin{equation*}\label{eq:circ}
	\begin{aligned}
		& \left(f \circ_i g\right)_{\alpha_1, \ldots, \alpha_{n+m-1}}\left(v_1, \ldots, v_{n+m-1}\right) \\
		=&f_{\alpha_1,\, \ldots,\alpha_{i-1}, \alpha_i \dots \alpha_{i+m-1}, \ldots, \alpha_{n+m-1}}
		\left(v_1, \ldots, v_{i-1}, g_{\alpha_i, \ldots, \alpha_{i+m-1}}\left(v_i, \ldots, v_{i+m-1}\right), v_{i+m}, \ldots, v_{n+m-1}\right),
	\end{aligned}
\end{equation*}
for $\alpha_1, \ldots, \alpha_{n+m-1} \in \Omega$ and $v_1, \ldots, v_{n+m-1} \in V$.


As a consequence, the graded vector space $\bigoplus\limits_{n \geqslant 1} \operatorname{Hom}_{\Omega}\left(V^{\otimes n}, V\right)$ carries graded Lie bracket of degree $-1$ (called the {\bf $\Omega$-Gerstenhaber bracket}) given by
\[
[f, g]_{\Omega}:=\sum_{i=1}^m(-1)^{(i-1)(n-1)} f \circ_i g-(-1)^{(m-1)(n-1)} \sum_{i=1}^n(-1)^{(i-1)(m-1)} g \circ_i f,
\]
for $f \in \operatorname{Hom}_{\Omega}\left(V^{\otimes m}, V\right)$ and $g \in \operatorname{Hom}_{\Omega}\left(V^{\otimes n}, V\right)$.

\begin{defn}  
	An associative algebra relative to the semigroup $\Omega$ 
	is a vector space $A$ together with a family of bilinear operations
	$ \mu = \{\mu_{\alpha,\, \beta}\}_{\alpha,\,\beta\in\Omega} $
	such that
	\[ \mu_{\alpha,\, \beta} : A \otimes  A \rightarrow  A,\,a\ot b\mapsto a\cdot_{\alpha,\,\beta} b\]
	satisfying
	\begin{align*}
		(a \cdot_{\alpha,\, \beta} b) \cdot_{\alpha \beta, \gamma} c = a \cdot_{\alpha,\, \beta \gamma} (b \cdot_{\beta, \gamma} c)
	\end{align*}
	for $ a, b, c \in A $ and $ \alpha, \beta, \gamma \in \Omega $.
	In this case, we call $ (A , \mu = \{\mu_{\alpha,\, \beta}\}_{\alpha,\,\beta\in\Omega}) $ an \textbf{$ \Omega $-associative algebra}.
\end{defn}

\begin{defn}\label{defn:bimodule}
	Let $(A , \mu)$
	be an $\Omega$-associative algebra. A {\bf bimodule} over it consists of a vector space $M$ together with two families of linear maps $ l=\{l_{\alpha,\,\beta}\}_{\alpha,\,\beta \in \Omega} $ and $ r=\{r_{\alpha,\,\beta}\}_{\alpha,\,\beta \in \Omega} $ with
	\begin{align*}
		l_{\alpha,\,\beta}: A \otimes M \rightarrow M ,& \quad (a, m) \mapsto a \cdot_{\alpha,\,\beta} m \\
		r_{\alpha,\,\beta}: M \otimes A \rightarrow M,  &  \quad (m, a) \mapsto m \cdot_{\alpha,\, \beta} a
	\end{align*}
	satisfying
	\begin{align*}
		(a \cdot_{\alpha,\, \beta} b) \cdot_{\alpha \beta,\, \gamma} m =& \, a \cdot_{\alpha,\, \beta \gamma} (b \cdot_{\beta, \gamma} m), \quad \\
		(a \cdot_{\alpha,\, \beta} m) \cdot_{\alpha \beta,\, \gamma} b =& \, a \cdot_{\alpha,\, \beta \gamma} (m \cdot_{\beta, \gamma} b),\\
		(m \cdot_{\alpha,\, \beta} a) \cdot_{\alpha \beta,\, \gamma} b =& \, m \cdot_{\alpha,\, \beta \gamma} (a \cdot_{\beta,\, \gamma} b).
	\end{align*}
for $a, b \in A, m \in M \text{ and } \alpha, \beta, \gamma \in \Omega$.
\end{defn}

The following result is well known.
\begin{prop}
Let $A$ be a vector space and $\mu: A^{\ot 2}\rightarrow A$ be a linear map. Then $(A, \mu)$ is an $\Omega$-associative algebra, if and only if, $\mu \in \operatorname{Hom}_\Omega\big(A^{\otimes 2}, A\big)$ is a Maurer-Cartan element in the graded Lie algebra $\big(\bigoplus\limits_{n \geqslant 0} \operatorname{Hom}_\Omega\big(A^{\otimes n+1}, A\big),[-,-]_{\Omega}\big)$, i,e. $ [\mu,\mu]_{\Omega} = 0$.
\end{prop}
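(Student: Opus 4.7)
The plan is to verify the equivalence by directly unpacking the $\Omega$-Gerstenhaber bracket $[\mu,\mu]_\Omega$ and matching the resulting identity against the $\Omega$-associativity axiom. Since $\mu$ has arity two, $[\mu,\mu]_\Omega$ lies in $\operatorname{Hom}_\Omega(A^{\otimes 3},A)$; thus the axiom, which involves three inputs and three semigroup labels, is exactly the right shape to be encoded by this vanishing.

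First I would substitute $f = g = \mu$ and $m = n = 2$ into the bracket formula. With these values every exponent $(i-1)(n-1)$ and $(i-1)(m-1)$ reduces to $i-1$, while the global prefactor $(-1)^{(m-1)(n-1)}$ equals $-1$. Hence the four signed terms collapse to
\[
[\mu,\mu]_\Omega \;=\; 2\bigl(\mu \circ_1 \mu \,-\, \mu \circ_2 \mu\bigr).
\]

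Next I would apply the explicit formula for $\circ_i$ to evaluate both composites on an arbitrary triple $(a,b,c) \in A^{\otimes 3}$ labelled by $\alpha,\beta,\gamma \in \Omega$. The key feature of $\Omega$-composition is that the $i$-th index slot of the outer map receives the semigroup product $\alpha_i \cdots \alpha_{i+m-1}$ of the inner labels. For $i=1$ this contracts $\alpha,\beta$ to $\alpha\beta$, and for $i=2$ it contracts $\beta,\gamma$ to $\beta\gamma$, yielding
\[
(\mu \circ_1 \mu)_{\alpha,\beta,\gamma}(a,b,c) = (a\cdot_{\alpha,\beta} b)\cdot_{\alpha\beta,\gamma} c,
\]
\[
(\mu \circ_2 \mu)_{\alpha,\beta,\gamma}(a,b,c) = a\cdot_{\alpha,\beta\gamma}(b\cdot_{\beta,\gamma} c).
\]
Since $\operatorname{char}\mathbf{k} = 0$, the vanishing of $[\mu,\mu]_\Omega$ is equivalent to the equality of these two families of trilinear maps for every choice of $\alpha,\beta,\gamma \in \Omega$, which is precisely the $\Omega$-associativity axiom.

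The argument is essentially a bookkeeping exercise and I do not foresee a substantive obstacle. The only mildly delicate point is tracking how the semigroup labels of the outer map arise as products of the inner labels, since a careless reading of the formula for $\circ_i$ would leave the outer index as a tuple $(\alpha,\beta)$ rather than the single semigroup element $\alpha\beta$, and the recovered identity would then fail to match the axiom verbatim. Once this contraction convention is observed, the equivalence is immediate.
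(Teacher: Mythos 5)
Your computation is correct: with $m=n=2$ the bracket collapses to $2(\mu\circ_1\mu-\mu\circ_2\mu)$, the label-contraction in $\circ_i$ produces exactly $(a\cdot_{\alpha,\beta}b)\cdot_{\alpha\beta,\gamma}c$ and $a\cdot_{\alpha,\beta\gamma}(b\cdot_{\beta,\gamma}c)$, and characteristic $0$ lets you divide by $2$. The paper omits a proof (declaring the result well known), but the identical computation appears as the $[\mu,\mu]_\Omega$ summand in its proof of Theorem~\ref{L infty MC and RB}, so your argument matches the paper's implicit approach.
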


\begin{defn}
	Let
	$(A, \mu)$ be an $\Omega$-associative algebra, $ M $ be an $ A $-bimodule.
	The {\bf Hochschild cochain complex of $A$ with coefficients in $M$}, denoted by $\mathcal{C}_{\Alg}^{\bullet}(A,\, M)$, is defined to be the graded space $\bigoplus\limits_{n\geqslant 1}\Hom(A^{\ot n},M)$ endowed with the following coboundary operator: \[\delta^{M}_{\Alg}: \Hom(A^{\otimes n}, M ) \ra \operatorname{Hom}_\Omega(A^{\otimes n+1}, M)\]
	given as
	\begin{align*}
		\big(\delta^M_{\Alg} (f) \big)_{\alpha_1, \ldots, \alpha_{n+1}} (a_1, \ldots, a_{n+1} ) =~& (-1)^{n+1}a_1 \cdot_{\alpha_1 , \alpha_2 \dots \alpha_{n+1}} f_{\alpha_2, \ldots, \alpha_{n+1}} (a_2, \ldots, a_{n+1})  \\
		+ \sum_{i=1}^n (-1)^{n-i+1} ~ f_{\alpha_1, \ldots, \alpha_i \alpha_{i+1}, \ldots, \alpha_{n+1}} & \big(  a_1, \ldots, a_{i-1}, a_i \cdot_{\alpha_i,\, \alpha_{i+1}} a_{i+1}, a_{i+2}, \ldots, a_{n+1}  \big) \nonumber \\
		+ ~ f_{\alpha_1, \ldots, \alpha_n} &(a_1, \ldots, a_n) ~\cdot_{\alpha_1 \dots a_n,\, \alpha_{n+1}} a_{n+1}, \nonumber
	\end{align*}
	for $f \in \operatorname{Hom}_\Omega\left(A^{\otimes n}, M\right)$.
	The cohomology of $\mathcal{C}_{\Alg}^{\bullet}(A,\, M)$ is called {\bf the  Hochschild cohomology of  the $\Omega$-associaltive algebra $A$ with  coefficients in $M$ } and it is denoted by $\mathrm{H}_{\Alg}^{\bullet}(A,\, M)$. When the bimodule $M$ is taken to be $A$ itself, i.e. the regular $A$-bimodule $ A $, the cochain complex $\mathcal{C}^\bullet(A,A)$  is just called the Hochschild cochain complex of the $\Omega$-associative algebra $A$ and it is denoted as $\mathcal{C}^\bullet(A)$. The cohomology of $\mathcal{C}^\bullet(A)$, denoted by $\mathrm{H}^\bullet_{\rm Alg}(A)$, is just called\textbf{ the Hochschild cohomology of the $\Omega$-associaltive algebra $A$}.
\end{defn}


\smallskip
\section{$L_{\infty}[1]$-structure and cohomology theory of relative $\Omega$-family Rota-Baxter algebras}
\label{sec:relative cohomology}

In this section, we apply Voronov's higher derived brackets method ~\cite{Voronov} to construct the $L_{\infty}[1]$-algebra that characterizes relative $\Omega$-family Rota-Baxter algebras of weight $ \lambda $ as Maurer-Cartan elements. We obtain twisted $L_{\infty}[1]$-algebra that controls deformations of the relative $\Omega$-family Rota-Baxter algebra of weight $ \lambda $. Consequently, we define a cohomology theory of relative $\Omega$-family Rota-Baxter algebras of weight $ \lambda $ induced by the twisted $L_{\infty}[1]$-algebra.
\subsection{$L_{\infty}[1]$-algebras associated with relative $\Omega$-family Rota-Baxter algebras}\
\label{sub:linfty} 

In this subsection, we mainly construct the $L_\infty[1]$-structure associated with relative $\Omega$-family Rota-Baxter algebras of weight $\lambda $.

Firstly, let's introduce the definition of relative $\Omega$-family Rota-Baxter algebras of weight $\lambda$ and some related notions.
\begin{defn}
	An \textbf{AssAct} is a quintuple
	$(A, \mu=\{\mu_{\alpha,\,\beta}\}_{\alpha,\,\beta\in\Omega},
	V,
	\mu_V=\{\mu_{V,\,\alpha,\,\beta}\}_{\alpha,\,\beta\in\Omega},
	l=\{l_{\alpha,\,\beta}\}_{\alpha,\,\beta\in\Omega}
	, r
	=\{r_{\alpha,\,\beta}\}_{\alpha,\,\beta\in\Omega})$, where$(A,\mu) $ and $(V,\mu_{V})$ are $\Omega$-associative algebras and $ V $ is an $ A $-bimodule with the action $ l: A \ot V \to V$, $ r: V \ot A \to V $ , and these structures are compatible in the following sense:
	\begin{eqnarray*}
		\mu_{V,\,\alpha\beta,\,\gamma} \circ (l_{\alpha,\,\beta} \ot \id) &=& l_{\alpha,\,\beta\gamma} \circ (\id \ot \mu_{V,\,\beta,\,\gamma}),\\
		\mu_{V,\,\alpha\beta,,\gamma} \circ (r_{\alpha,\,\beta} \ot \id) &=& \mu_{V,\,\alpha,\,\beta\gamma} \circ (\id \ot l_{\beta,\,\gamma}),\\
		r_{\alpha\beta,\,\gamma} \circ (\mu_{V,\,\alpha,\,\beta} \ot \id) &=& \mu_{V,\,\alpha,\,\beta\gamma} \circ (\id \ot r_{\beta,\,\gamma}).
	\end{eqnarray*}
\end{defn}

\begin{defn}
	Let $(A, \mu,V, \mu_V,l,r)$ be an AssAct. A family of linear maps  $ T = \{T_\omega\}_{\omega\in\Omega}: V \to A $ is called a \textbf{relative $\Omega$-family Rota-Baxter operator of weight $\lambda$  } if
	\[ \mu_{\alpha, \, \beta} \circ (T_\alpha \ot T_\beta) = T_{\alpha\beta} \circ \big( l_{\alpha, \, \beta} \circ (T_\alpha \ot \id) + r_{\alpha, \, \beta} \circ (\id \ot T_\beta) + \lambda {\mu_{V}}_{\alpha, \, \beta}\big). \]
	In this case, we call
	$ (A,\mu,V,\mu_{V},l,r,T) $
	a \textbf{ relative $\Omega$-family Rota-Baxter algebra of weight $\lambda $ }.
\end{defn}

\begin{prop}\label{V-data RB}
	Let $ A $ and $ V $ be two vector spaces. We have a $V$-data $(\mathfrak{L}, \mathfrak{a}, P, \Delta)$ as follows:
\begin{enumerate}
		\item  the graded Lie algebra $(\mathfrak{L},[-, -])$ is given by $\Big(\bigoplus\limits_{n=0}^{+\infty} \mathrm{Hom}_\Omega((A \oplus V)^{\ot n+1}, A \oplus V),[-, -]_{\Omega}\Big)$;
		\item  the abelian graded Lie subalgebra $\mathfrak{a}$ is given by
		$
		\mathfrak{a}:=\bigoplus\limits_{n=0}^{+\infty} \operatorname{Hom}_\Omega\big(V^{\ot n+1}, A\big)
		$;
		\item  $P: \mathfrak{L} \rightarrow \mathfrak{L}$ is the projection onto the subspace $\mathfrak{a}$;
		\item  $\Delta=0$.
\end{enumerate}
\end{prop}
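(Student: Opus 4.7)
The plan is to verify the four axioms in the definition of $V$-data in turn. Axiom (i), that $(\mathfrak{L}, [-,-]_\Omega)$ is a graded Lie algebra, is immediate from the general fact recalled in Subsection~1.3: for any vector space $W$, the graded space $\bigl(\bigoplus_{n\geqslant 0} \mathrm{Hom}_\Omega(W^{\otimes n+1}, W), [-,-]_\Omega\bigr)$ is a graded Lie algebra, and we simply apply this with $W = A \oplus V$. Axiom (iv) is vacuous since $\Delta = 0$ trivially lies in $\ker(P)$, can be assigned any degree, and satisfies $[\Delta, \Delta] = 0$. So the real content is axioms (ii) and (iii), and for both a single bookkeeping idea will suffice: decompose every $\mathrm{Hom}_\Omega$-space on $A\oplus V$ according to which summand ($A$ or $V$) each input slot and the output slot belong to.

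Writing $X_A := A$ and $X_V := V$, one has a direct sum decomposition
\[
\mathrm{Hom}_\Omega\bigl((A\oplus V)^{\otimes n+1},\, A \oplus V\bigr) \;=\; \bigoplus_{\epsilon_0, \ldots, \epsilon_{n+1} \in \{A,V\}} \mathrm{Hom}_\Omega\bigl(X_{\epsilon_1} \otimes \cdots \otimes X_{\epsilon_{n+1}},\, X_{\epsilon_0}\bigr).
\]
The subspace $\mathfrak{a} = \bigoplus_{n\geqslant 0}\mathrm{Hom}_\Omega(V^{\otimes n+1}, A)$ is precisely the sum of those summands with $\epsilon_0 = A$ and $\epsilon_1 = \cdots = \epsilon_{n+1} = V$, so $P$ is the projection onto that collection of summands. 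This gives $P\circ P = P$ and $\im(P) = \mathfrak{a}$ tautologically.

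For (ii), I would take $f \in \mathrm{Hom}_\Omega(V^{\otimes m}, A)$ and $g \in \mathrm{Hom}_\Omega(V^{\otimes n}, A)$ and observe that each partial composition $f \circ_i g$ inserts the $A$-valued output of $g$ into the $i$-th slot of $f$, which only sees $V$-inputs; since $A$ and $V$ are distinct summands, this composition is zero, and the same holds for $g \circ_i f$. Hence $[f, g]_\Omega = 0$ and $\mathfrak{a}$ is abelian. For (iii), let $f, g \in \ker(P)$, so both have vanishing $\mathrm{Hom}_\Omega(V^{\otimes *+1}, A)$ components. To show $[f, g]_\Omega \in \ker(P)$, I would examine when a summand $f \circ_i g$ can contribute to the ``all-$V$-inputs, $A$-output'' part. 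Tracking types, such a contribution forces the output of $f$ to be $A$, forces the external slots of $f$ and $g$ to be $V$, and requires the glued intermediate type $\epsilon \in \{A,V\}$ (the output of $g$, matched with the $i$-th input of $f$) to be consistent. If $\epsilon = V$, then $f$ itself has all $V$-inputs and $A$-output, contradicting $P(f) = 0$; if $\epsilon = A$, then $g$ itself has all $V$-inputs and $A$-output, contradicting $P(g) = 0$. The $g \circ_i f$ terms are handled symmetrically, so $[f, g]_\Omega \in \ker(P)$.

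The only subtlety I anticipate is keeping the slot types straight across all choices of $i$ and across the antisymmetrisation in $[-,-]_\Omega$; but the type analysis above is uniform in these choices, so I do not expect a genuine obstacle. The core conceptual point is simply that ``colored composition cannot create input/output types that are not already present in the factors,'' which is exactly what both (ii) and (iii) need.
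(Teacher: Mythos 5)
Your verification is correct and complete: the paper states this proposition without proof (treating it as routine), and your type-decomposition argument — showing that composition cannot produce an ``all-$V$-inputs, $A$-output'' component unless one factor already has one, whence $\mathfrak{a}$ is abelian and $\ker(P)$ is a graded Lie subalgebra — is exactly the standard check that is being left implicit. No gaps.
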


Consider the following graded subspace of $\mathfrak{L}$:
\[\mathfrak{L}'
= \bigoplus\limits_{n=0}^{+\infty} \mathfrak{L}'(n+1)
:= \bigoplus\limits_{n=0}^{+\infty} \left(\mathrm{Hom}_\Omega(A^{\ot n+1},A) \bigoplus \bigoplus\limits_{i=0}^{n} \mathrm{Hom}_\Omega(\mathcal{A}^{i,n+1-i},V)\right),\]
where $\mathcal{A}^{p,q}$ is the subspace of $(A\oplus V)^{\otimes p+q}$ consisting of the tensor powers of $A$ and $V$ with $A$, $V$ appearing $p,q$ times respectively.
Obviously, $\mathfrak{L}'$ is a graded Lie subalgebra of $ \mathfrak{L} $ and it will determines an $L_\infty[1]$-algebra in the following way:
\begin{prop}\label{L infty classical}
	With the above notations, $ (s^{-1}\mathfrak{L}' \oplus \mathfrak{a}, \{l_{i}\}_{i=1}^{+\infty}) $ is an $ L_{\infty}[1] $-algebra, where
	\begin{eqnarray*}
		&& l_{1} = 0, \\
		&& l_{2}(s^{-1}f,s^{-1}g) = (-1)^{|f|} s^{-1}[f,g]_{\Omega},\\
		&& l_{i}(s^{-1}f, \theta_{1},\dots,\theta_{i-1}) = P\Big[\dots \big[[f,\theta_{1}]_{\Omega},\theta_{2}\big]_{\Omega} \dots, \theta_{i-1}\Big]_{\Omega}, i \geqslant 2,
	\end{eqnarray*}
	for homogeneous elements $ f,g \in \mathfrak{L}' $, $ \theta_{1}, \dots , \theta_{i-1} \in \mathfrak{a} $, and all the other $L_{\infty}[1]$-algebra products that are not obtained from the ones written above by permutations of arguments, will vanish.
\end{prop}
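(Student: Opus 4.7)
The plan is to obtain this proposition as a direct specialization of Theorem \ref{thm:derived bracket} applied to the $V$-data of Proposition \ref{V-data RB}, combined with Remark \ref{rk:subalgebra}. Since $\Delta = 0$ in that $V$-data, the stability condition $[\Delta, \mathfrak{L}'] \subset \mathfrak{L}'$ required by Remark \ref{rk:subalgebra} is automatically satisfied, so the only substantive step is to show that $\mathfrak{L}'$ is a graded Lie subalgebra of $\mathfrak{L}$ under the $\Omega$-Gerstenhaber bracket $[-,-]_\Omega$.

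To verify this closure, I would fix homogeneous elements $f, g \in \mathfrak{L}'$ and analyze each partial composition $f \circ_i g$, since $[f, g]_\Omega$ is a signed sum of such $\circ_i$-terms. Every homogeneous summand of $\mathfrak{L}'$ carries one of two \emph{type-signatures}: either all $A$-inputs with $A$-output (the summand $\mathrm{Hom}_\Omega(A^{\otimes n+1}, A)$), or mixed $A/V$-inputs with at least one $V$-slot and $V$-output (the summand $\bigoplus_{i=0}^{n} \mathrm{Hom}_\Omega(\mathcal{A}^{i, n+1-i}, V)$). I would then run through the four type-combinations of $(f, g)$ and check, in each case, that $f \circ_i g$ either vanishes (when the output type of $g$ fails to match the slot-type of $f$ at position $i$) or lies visibly in $\mathfrak{L}'$. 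The only case needing a moment of care is when both $f$ and $g$ are of the mixed $V$-valued type: inserting $g$ into a $V$-slot of $f$ yields a $V$-valued map whose total $V$-input count is the sum of those of $f$ and $g$ minus one, hence at least one, and so the result again lies in $\mathfrak{L}'$.

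With the subalgebra property established, Remark \ref{rk:subalgebra} delivers an $L_\infty[1]$-structure on $s^{-1}\mathfrak{L}' \oplus \mathfrak{a}$ by restricting the derived brackets from Theorem \ref{thm:derived bracket}. Substituting $\Delta = 0$ into those formulas yields the stated expressions for $l_2(s^{-1}f, s^{-1}g)$ and for $l_i(s^{-1}f, \theta_1, \dots, \theta_{i-1})$ at $i \geq 2$ verbatim. The vanishing of $l_1$ rests on two observations: the term $-s^{-1}[\Delta, f]$ is zero because $\Delta = 0$, and $P(f) = 0$ for every $f \in \mathfrak{L}'$ because $\mathfrak{L}'$ and $\mathfrak{a}$ have disjoint type-signatures ($\mathfrak{a}$ consists of maps with all-$V$-inputs and $A$-output, a signature appearing in neither summand of $\mathfrak{L}'$). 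Analogously, the $\mathfrak{a}$-only higher brackets $l_k(\theta_1, \dots, \theta_k)$ for $k \geq 2$ all vanish since each involves $\Delta$. The main obstacle is essentially bookkeeping in the closure step; tracking the interlaced $A$- and $V$-slots through the $\circ_i$-operations is routine but requires care.
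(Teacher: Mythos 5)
Your proposal is correct and follows exactly the route the paper takes: the paper asserts that $\mathfrak{L}'$ is a graded Lie subalgebra of $\mathfrak{L}$ and then deduces the proposition "immediately from Remark~\ref{rk:subalgebra}" applied to the $V$-data of Proposition~\ref{V-data RB} with $\Delta=0$. Your type-signature analysis of the partial compositions $f\circ_i g$ and the observation that $P|_{\mathfrak{L}'}=0$ simply supply the details the paper leaves implicit, and they are accurate.
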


\begin{proof}
 It can be obtained immediately from Remark~\ref{rk:subalgebra}.
\end{proof}

To study the cohomology of relative $\Omega$-family Rota-Baxter algebras of weight $\lambda$, we modify the above $ L_{\infty}[1] $-algebra as follows:

\begin{thm}\label{modified L infty}
	Let $ \lambda \in \bfk$, with the above notations, $ (s^{-1}\mathfrak{L}' \oplus \mathfrak{a}, \{l'_{i}\}_{i=1}^{+\infty}) $ is an $ L_{\infty}[1] $-algebra, where
	\begin{eqnarray*}
		&& l'_{1} = 0, \\
		&& l'_{2}(s^{-1}f,s^{-1}g) = (-1)^{|f|} s^{-1}[f,g]_{\Omega},\\
		&& l'_{i}(s^{-1}f, \theta_{1},\dots,\theta_{i-1}) = \lambda^{n-(i-1)}
		P\Big[\dots \big[[f,\theta_{1}]_{\Omega},\theta_{2}\big]_{\Omega} \dots, \theta_{i-1}\Big]_{\Omega}
		,  i \geqslant 2,
	\end{eqnarray*}
	for homogeneous elements $ f\in \mathfrak{L}'(n)$ with $ |f| = n-1 $, $ g\in \mathfrak{L}'$, $ \theta_{1}, \dots , \theta_{i-1} \in \mathfrak{a} $, and all the other $L_{\infty}[1]$-algebra products that are not obtained from the ones written above by permutations of arguments, will vanish.
\end{thm}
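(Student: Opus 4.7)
The plan is to verify the generalized Jacobi identities for $\{l'_i\}$ by reducing them to the corresponding identities for $\{l_i\}$ from Proposition~\ref{L infty classical}. The crucial observation is that $\{l'_i\}$ and $\{l_i\}$ have the same support: both vanish outside of $l'_2$ applied to two $s^{-1}\mathfrak{L}'$-arguments and $l'_i$ ($i\geq 2$) applied to one $s^{-1}\mathfrak{L}'$-argument together with $i-1$ arguments from $\mathfrak{a}$. The modification only multiplies this second family by the scalar $\lambda^{n-i+1}$, while the Koszul signs $\varepsilon(\sigma)$ and the underlying algebraic operations are unchanged. Hence each term in the generalized Jacobi expression for $\{l'_i\}$ equals the corresponding term for $\{l_i\}$ up to a specific power of $\lambda$, and it suffices to show that, within any fixed Jacobi identity, the $\lambda$-powers of all surviving terms coincide.

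I would classify Jacobi identities by the pair $(p,q)$, where $p$ is the number of arguments from $s^{-1}\mathfrak{L}'$ and $q$ the number from $\mathfrak{a}$. A short case check shows that every term in the Jacobi expression vanishes trivially except when $(p,q)=(3,0)$ or $p=2$ with $q\geq 1$. The case $(3,0)$ is the graded Jacobi identity for the Lie bracket $[-,-]_\Omega$ on $\mathfrak{L}'$, which is identical for $\{l_i\}$ and $\{l'_i\}$ and holds by Proposition~\ref{L infty classical}. For $p=2$ with $q\geq 1$, write the $s^{-1}\mathfrak{L}'$-arguments as $s^{-1}f$ and $s^{-1}g$ with $f \in \mathfrak{L}'(n_f)$, $g \in \mathfrak{L}'(n_g)$. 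The non-vanishing Jacobi terms fall into three families: (a) inner $l'_2(s^{-1}f,s^{-1}g)$ followed by outer $l'_{q+1}$ on the result and all $\mathfrak{a}$-arguments; (b) inner $l'_i(s^{-1}f,\text{subset of }\mathfrak{a}\text{-arguments})$ followed by outer $l'_{q-i+3}$ on $s^{-1}g$, the inner result, and the remaining $\mathfrak{a}$-arguments; and (c) the symmetric family obtained from (b) by swapping $f$ and $g$.

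Using that $[f,g]_\Omega \in \mathfrak{L}'(n_f+n_g-1)$ and that the projection $P$ preserves arity, a direct calculation shows that every such term carries the common $\lambda$-weight $\lambda^{n_f+n_g-q-1}$: in (a) the factor comes entirely from the outer bracket as $\lambda^{(n_f+n_g-1)-q}$, while in (b) (and symmetrically in (c)) the contributions $\lambda^{n_f-i+1}$ from the inner bracket and $\lambda^{n_g-(q-i+2)}$ from the outer bracket combine to $\lambda^{n_f+n_g-q-1}$. Factoring out this common scalar, the modified Jacobi expression becomes $\lambda^{n_f+n_g-q-1}$ times the unmodified one, which vanishes by Proposition~\ref{L infty classical}. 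The main technical obstacle is the careful arity/$\lambda$-exponent bookkeeping in the $(2,q)$ case, but once the two rules above are recognized, matching the exponents reduces to a short uniform calculation that is independent of the shuffle $\sigma$, the subset of $\mathfrak{a}$-arguments placed inside, and which of $f,g$ is bracketed first.
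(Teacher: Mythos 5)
Your proposal is correct and follows essentially the same route as the paper: both reduce the generalized Jacobi identity for $\{l'_i\}$ to that of $\{l_i\}$ from Proposition~\ref{L infty classical} by observing that within each surviving identity every nonzero term acquires the same uniform factor $\lambda^{n_f+n_g-q-1}$ (the paper's $\lambda^{k_1+k_2-n+1}$). Your exponent bookkeeping in the $(2,q)$ case is in fact spelled out more explicitly than in the paper, and your inclusion of the $q=1$ (i.e.\ $n=3$ with two $s^{-1}\mathfrak{L}'$-arguments) subcase is slightly more careful than the paper's stated case split.
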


\begin{proof}
	By Proposition \ref{L infty classical}, $ (s^{-1}\mathfrak{L}' \oplus \mathfrak{a} , \{l_{i}\}_{i=1}^{+\infty} )$ is an $ L_{\infty}[1] $-algebra. For homogeneous elements $ f \in \mathfrak{L}'(n) , g \in \mathfrak{L}' $ and $ \theta_{1} , \dots , \theta_{i-1} \in \mathfrak{a}$, we have
	\begin{eqnarray}
		&& l'_{1} = l_{1} = 0, \label{classical and modified 1}\\
		&& l'_{2}(s^{-1}f,s^{-1}g) = l_{2}(s^{-1}f,s^{-1}g), \label{classical and modified 2}\\
		&& l'_{i}(s^{-1}f, \theta_{1},\dots,\theta_{i-1}) = \lambda^{n-(i-1)} l_{i}(s^{-1}f, \theta_{1},\dots,\theta_{i-1}) . \label{classical and modified 3}
	\end{eqnarray}
	So $ \{l'_{i}\}_{i=1}^{+\infty} $ inherit the graded symmetry property from $ \{l_{i}\}_{i=1}^{+\infty} $. Now, let's check that the family $\{l_i'\}_{i=1}^{+\infty}$ satisfies the generalised Jacobi identity, i.e., for homogeneous elements, $x_1,\dots, x_n\in s^{-1}\mathfrak{L}'\oplus \mathfrak{a} $, the following equation holds:
	\begin{eqnarray}	\sum_{i=1}^n \sum_{\sigma \in \operatorname{Sh}(i, n-i)} \varepsilon(\sigma) l'_{n-i+1}\left(l'_i\left(x_{\sigma(1)}, \ldots, x_{\sigma(i)}\right), x_{\sigma(i+1)}, \ldots, x_{\sigma(n)}\right)=0 \label{Eq: modified-Jacobi-identity}.
		\end{eqnarray}
	
	By the definition of $\{l_i'\}_{i=1}^{+\infty}$, all terms in the Equation \eqref{Eq: modified-Jacobi-identity} above will be trivial except the following two cases:
	\begin{itemize}
		\item[(i)] when $n=3$, $x_1,x_2,x_3$ all come from $s^{-1}\mathfrak{L'}$.
		\item[(ii)] when $n\geq 4$, there are exactly two elements of $x_1,\dots, x_n$ belong to $s^{-1}\mathfrak{L}'$ and the rest elements belong to $\mathfrak{a}$. Assume that the two elements come from $s^{-1}\mathfrak{L}'(k_{1}), s^{-1}\mathfrak{L}'(k_{2})$
		respectively.
		\end{itemize}
In case $\rm (i)$, the Eq.~\eqref{Eq: modified-Jacobi-identity} holds according to Eq. \eqref{classical and modified 2}.
In case $\rm (ii)$, by the definition of $\{l_i'\}_{i=1}^{+\infty}$, we have the following equality holds:
\[l'_{n-i+1}\left(l'_i\left(x_{\sigma(1)}, \ldots, x_{\sigma(i)}\right), x_{\sigma(i+1)}, \ldots, x_{\sigma(n)}\right)={\lambda^{k_{1}+k_{2}-n+1}}l_{n-i+1}\left(l_i\left(x_{\sigma(1)}, \ldots, x_{\sigma(i)}\right), x_{\sigma(i+1)}, \ldots, x_{\sigma(n)}\right).\] So Eq. \eqref{Eq: modified-Jacobi-identity} also holds in this case.

	In conclusion, $ (s^{-1}\mathfrak{L}' \oplus \mathfrak{a} , \{l'_{i}\}_{i=1}^{+\infty} )$ is an $ L_{\infty}[1] $-algebra.
\end{proof}
	
\begin{thm}\label{L infty MC and RB}
	Let $ A $ and $ V $ be two vector spaces endowed with five families of linear maps :
	\begin{align*}
		&\mu: A \ot A \to A,\quad l:A \ot V \to V,\quad r: V \ot A \to V,\\
 &\mu_V: V \ot V \to V,\quad T: V \to A.
	\end{align*}
	Denote $ \pi = \mu + l + r +\mu_{V} $. Then $ (s^{-1}\pi, T)$
	is a Maurer-Cartan element of $ (s^{-1}\mathfrak{L}' \oplus \mathfrak{a}, \{l'_{i}\}_{i=1}^{+\infty}) $,
	if and only if,  $ (A,\mu,V,\mu_{V},l,r,T) $ is a relative $\Omega$-family Rota-Baxter algebra of weight $\lambda$.
\end{thm}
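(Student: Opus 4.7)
The plan is to expand the Maurer--Cartan equation $\sum_{n \geq 1} \tfrac{1}{n!} l'_n(\alpha^{\otimes n}) = 0$ for the degree-zero element $\alpha = (s^{-1}\pi, T)$, and to read off its component in $s^{-1}\mathfrak{L}'$ (which will encode the AssAct axioms) and its component in $\mathfrak{a}$ (which will encode the relative $\Omega$-family Rota--Baxter identity). Every step in the computation is a manifest equivalence, so both directions of the theorem are handled simultaneously.

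First I would isolate the nonzero terms. Since $l'_1 = 0$, since $l'_n$ on purely $\mathfrak{a}$-valued arguments vanishes (the underlying $V$-data has $\Delta = 0$), and since the definition of $l'_n$ only allows either two $s^{-1}\mathfrak{L}'$-arguments (in $l'_2$) or exactly one $s^{-1}\mathfrak{L}'$-argument followed by $\mathfrak{a}$-arguments, graded symmetry leaves only $l'_2(s^{-1}\pi, s^{-1}\pi) = -s^{-1}[\pi,\pi]_\Omega$ and the tower $l'_n(s^{-1}\pi, T, \ldots, T)$ for $n \geq 2$. The crucial technical step is that this tower terminates at $n=3$: setting $\sigma := [[\pi,T]_\Omega, T]_\Omega$ and evaluating it on the various strata of $(A \oplus V)^{\otimes 2}$, one finds $\sigma$ is supported only on $V^{\otimes 2}$ with values in $A$; every summand of $[\sigma, T]_\Omega = \sigma \circ_1 T + \sigma \circ_2 T - T \circ_1 \sigma$ then vanishes, because the first two feed an $A$-valued output of $T$ into a $V$-slot of $\sigma$, and the last applies $T$ to an $A$-valued output. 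An induction shows all $l'_n(s^{-1}\pi, T, \ldots, T)$ with $n \geq 4$ vanish identically, so the naively negative powers $\lambda^{2-(n-1)}$ cause no trouble.

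With this truncation the Maurer--Cartan equation collapses to
\[
-\tfrac{1}{2}\, s^{-1}[\pi,\pi]_\Omega \;+\; \lambda\, P[\pi,T]_\Omega \;+\; \tfrac{1}{2}\, P[[\pi,T]_\Omega, T]_\Omega \;=\; 0.
\]
The $s^{-1}\mathfrak{L}'$-component reads $[\pi,\pi]_\Omega = 0$, meaning $(A \oplus V, \pi)$ is an $\Omega$-associative algebra; restricting this cubic identity to the eight type-strata $A^{\otimes 3}$, $A^{\otimes 2} \otimes V$, $A \otimes V \otimes A$, $V \otimes A^{\otimes 2}$, $A \otimes V^{\otimes 2}$, $V \otimes A \otimes V$, $V^{\otimes 2} \otimes A$, $V^{\otimes 3}$ yields respectively the associativity of $\mu$, the three $A$-bimodule axioms for $(l, r)$, the three AssAct compatibility identities of Definition of AssAct, and the associativity of $\mu_V$. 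For the $\mathfrak{a}$-component, a direct computation produces $P[\pi, T]_\Omega(v_1, v_2) = -T(\mu_V(v_1, v_2))$ and $\tfrac{1}{2} P[[\pi, T]_\Omega, T]_\Omega(v_1, v_2) = \mu(T(v_1), T(v_2)) - T(l(T(v_1), v_2)) - T(r(v_1, T(v_2)))$, whose sum vanishes if and only if the relative $\Omega$-family Rota--Baxter equation of weight $\lambda$ holds. The main obstacle in carrying this out is purely combinatorial bookkeeping, namely tracking signs in the $\Omega$-Gerstenhaber bracket and keeping straight the type-decomposition on $(A \oplus V)^{\otimes k}$ under the projection $P$; once the higher-bracket vanishing is established, the equivalence is automatic.
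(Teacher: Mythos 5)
Your proposal is correct and follows essentially the same route as the paper: truncate the Maurer--Cartan sum using $l'_1=0$, the vanishing of purely $\mathfrak{a}$-valued brackets (since $\Delta=0$), and the identity $\big[\big[[\pi,T]_{\Omega},T\big]_{\Omega},T\big]_{\Omega}=0$, then identify the $s^{-1}\mathfrak{L}'$-component $[\pi,\pi]_{\Omega}=0$ with the AssAct axioms and the $\mathfrak{a}$-component $\lambda P[\pi,T]_{\Omega}+\tfrac12 P\big[[\pi,T]_{\Omega},T\big]_{\Omega}=0$ with the relative $\Omega$-family Rota--Baxter identity. Your explicit support argument for the vanishing of the higher brackets and the stratum-by-stratum reading of $[\pi,\pi]_{\Omega}=0$ are just more detailed versions of the computations the paper performs directly.
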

\begin{proof}
Notice that $ \Big[\big[[\pi,T]_{\Omega},T\big]_{\Omega},T\Big]_{\Omega} = 0$, so we have $l_n'((s^{-1}\pi, T)^{\ot n})=0$ for all $n\geqslant 4$. Then the Maurer-Cartan equation for the element $(s^{-1}\pi, T)$ is
	\begin{eqnarray*}
		&& \sum_{k=1}^{+\infty} \frac{1}{k !} l'_{k}((s^{-1}\pi,T),\dots,(s^{-1}\pi,T))\\
		&=& \frac{1}{2 !} l'_{2}((s^{-1}\pi,T),(s^{-1}\pi,T)) +
		\frac{1}{3 !} l'_{3}((s^{-1}\pi,T),(s^{-1}\pi,T),(s^{-1}\pi,T))\\
		&=& \frac{1}{2} l'_{2}(s^{-1}\pi,s^{-1}\pi) + l'_{2}(s^{-1}\pi,T) +\frac{1}{2} l'_{3}(s^{-1}\pi,T,T)\\
		&=& \Big(-\frac{1}{2}s^{-1}[\pi,\pi]_{\Omega},\lambda P[\pi,T]_{\Omega}+\frac{1}{2} P\big[\left[\pi,T\right]_{\Omega},T\big]_{\Omega}\Big).
	\end{eqnarray*}
	Note that
	\begin{eqnarray*}
		&&[\pi,\pi]_{\Omega}\\
		&=&[\mu,\mu]_{\Omega}+2[\mu,l]_{\Omega}+2[\mu,r]_{\Omega}+[l,l]_{\Omega}\\
         &&+2[l,r]_{\Omega}+2[l,\mu_{V}]_{\Omega}+[r,r]_{\Omega}+2[r,\mu_{V}]_{\Omega}+[\mu_{V},\mu_{V}]_{\Omega}\\
		&=&2(\mu \circ (\mu  \ot \id) - \{\mu \circ (\id \ot \mu))
		+2(l \circ (\mu \ot \id) - l \circ (\id \ot l))
		+2(r \circ (r \ot \id) - r \circ (\id \ot \mu))\\
		&&+2(r \circ (l \ot \id) - l \circ (\id \ot r))
		+2(\mu_{V} \circ (l \ot \id) - l \circ (\id \ot \mu_{V}))
		+2(\mu_{V} \circ (r \ot \id) - \mu_{V} \circ (\id \ot l))\\
		&&+2(r \circ (\mu_{V} \ot \id) - \mu_{V} \circ (\id \ot r))
		+2(\mu_{V} \circ (\mu_{V} \ot \id) - \mu_{V} \circ (\id \ot \mu_{V})),
	\end{eqnarray*}
	so
	\begin{eqnarray*}
		&&[\pi,\pi]_{\Omega}=0\\
		&\Leftrightarrow& \mu \circ (\mu \ot \id) = \mu \circ (\id \ot \mu),
		\hspace{1cm} l \circ (\mu \ot \id) = l \circ (\id \ot l),
		\hspace{1cm} r \circ (r \ot \id) = r \circ (\id \ot \mu),\\
		&&r \circ (l \ot \id) = l \circ (\id \ot r),
		\hspace{1.5cm} \mu_{V} \circ (l \ot \id) = l \circ (\id \ot \mu_{V}),
		\hspace{0.5cm} \mu_{V} \circ (r \ot \id) = \mu_{V} \circ (\id \ot l),\\
		&&r \circ (\mu_{V} \ot \id) = \mu_{V} \circ (\id \ot r),
		\quad \mu_{V} \circ (\mu_{V} \ot \id) = \mu_{V} \circ (\id \ot \mu_{V}),\\
		&\Leftrightarrow&  (A,\mu,V,\mu_{V},l,r)  \text{ is an AssAct.}
	\end{eqnarray*}
	Moreover, we have
	\begin{eqnarray*}
		&&\lambda P[\pi,T]_{\Omega}+\frac{1}{2} P\big[[\pi,T]_{\Omega},T\big]_{\Omega}\\
		&=&-\lambda T \circ \mu_{V} + \mu \circ (T \ot T) - T \circ l \circ (T \ot \id)- T \circ r \circ (\id \ot T).
%
	\end{eqnarray*}
	Hence, $ (s^{-1}\pi, T)$ is a Maurer-Cartan element of $ (s^{-1}\mathfrak{L}' \oplus \mathfrak{a}, \{l'_{i}\}_{i=1}^{+\infty})$,  if and only if,
 $ (A,\mu,V,\mu_{V},$ $l,r,T) $ is a relative $\Omega$-family Rota-Baxter algebra of weight $\lambda$.
\end{proof}

Then we obtain the twisted $L_\infty[1]$-algebra that controls the deformations of $\Omega$-family Rota-Baxter algebras of weight $\lambda $.

\begin{thm} \label{twisted L infty deformation}
With the  above notations, the twisted $L_\infty[1]$-algebra $(s^{-1}\mathfrak{L}'\oplus \frak a, \{l_{i}'^{(s^{-1}\pi, T)}\}_{i=1}^{+\infty})$ controls the deformations of the $\Omega$-associative Rota-Baxter algebra  $(A, \mu, V, \mu_V, l, r, T)$ of weight $\lambda$. Explicitly, $(A, \mu+\widehat{\mu}, V, \mu_V+\widehat{\mu}_V, l+\widehat{l}, r+\widehat{r}, T+\widehat{T})$ is again a relative $\Omega$-family Rota-Baxter algebra of weight $\lambda $ if and only if $(s^{-1}\widehat{\pi}, \widehat{T})$ is a Maurer-Cartan element of $(s^{-1}\mathfrak{L}'\oplus\frak a, \{l_{i}'^{(s^{-1}\pi, T)}\}_{i=1}^{+\infty})$, where $ \widehat{\pi} = \widehat{\mu} + \widehat{l} + \widehat{r} + \widehat{\mu_{V}}$.
\end{thm}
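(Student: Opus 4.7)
The strategy is a direct concatenation of Theorem~\ref{L infty MC and RB} with the twisting procedure recorded in Proposition~\ref{twisted L infty}. The key observation is that the deformed data $(\mu+\widehat{\mu},\mu_V+\widehat{\mu}_V,l+\widehat{l},r+\widehat{r},T+\widehat{T})$ is again a relative $\Omega$-family Rota-Baxter algebra of weight $\lambda$ precisely when the characterization of Theorem~\ref{L infty MC and RB} applies to these deformed operations, that is, when $(s^{-1}(\pi+\widehat{\pi}),T+\widehat{T})$ is a Maurer-Cartan element of the untwisted $L_{\infty}[1]$-algebra $(s^{-1}\mathfrak{L}'\oplus\mathfrak{a},\{l'_i\}_{i=1}^{+\infty})$ constructed in Theorem~\ref{modified L infty}.

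First, I would observe that the assignment
\[
(\widehat{\mu},\widehat{\mu}_V,\widehat{l},\widehat{r},\widehat{T}) \longmapsto (s^{-1}\widehat{\pi},\widehat{T}) \in (s^{-1}\mathfrak{L}')^0 \oplus \mathfrak{a}^0,
\]
where $\widehat{\pi}=\widehat{\mu}+\widehat{l}+\widehat{r}+\widehat{\mu}_V$, identifies the total deformed Maurer-Cartan element with the sum $(s^{-1}\pi,T)+(s^{-1}\widehat{\pi},\widehat{T})$. Hence applying Theorem~\ref{L infty MC and RB} to the deformed algebra says exactly that this sum is a Maurer-Cartan element of $(s^{-1}\mathfrak{L}'\oplus\mathfrak{a},\{l'_i\}_{i=1}^{+\infty})$.

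Next, since $(A,\mu,V,\mu_V,l,r,T)$ is by hypothesis a relative $\Omega$-family Rota-Baxter algebra of weight $\lambda$, Theorem~\ref{L infty MC and RB} guarantees that $(s^{-1}\pi,T)$ is itself a Maurer-Cartan element, and therefore the twisting procedure of Proposition~\ref{twisted L infty} is legitimately available at this base point. Invoking the ``Moreover'' part of that proposition with $\alpha=(s^{-1}\pi,T)$ and $\widehat{\alpha}=(s^{-1}\widehat{\pi},\widehat{T})$ translates the Maurer-Cartan condition on the sum in the untwisted $L_\infty[1]$-algebra into the Maurer-Cartan condition on $(s^{-1}\widehat{\pi},\widehat{T})$ in the twisted $L_\infty[1]$-algebra $(s^{-1}\mathfrak{L}'\oplus\mathfrak{a},\{l_i'^{(s^{-1}\pi,T)}\}_{i=1}^{+\infty})$. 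Chaining the two equivalences yields the theorem.

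I do not anticipate serious obstacles: since $l_n'$ vanishes on $(s^{-1}\pi,T)^{\otimes n}$ for $n\ge 4$ (as already noted in the proof of Theorem~\ref{L infty MC and RB}), the infinite sums defining the twisted brackets and the twisted Maurer-Cartan equation are automatically finite, so no convergence issue arises. The only point requiring a little care is bookkeeping of the identification $s^{-1}(\pi+\widehat{\pi})=s^{-1}\pi+s^{-1}\widehat{\pi}$ together with verifying that $\widehat{\pi}\in\mathfrak{L}'(2)$ lies in the same graded component as $\pi$ so that the twisting is well-defined on $s^{-1}\mathfrak{L}'\oplus\mathfrak{a}$; this is immediate from the definitions.
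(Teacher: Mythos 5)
Your proposal is correct and follows exactly the route the paper intends: the paper's proof is the one-line remark that the theorem is a direct corollary of Proposition~\ref{twisted L infty}, which implicitly uses Theorem~\ref{L infty MC and RB} to identify the deformed structure with the Maurer--Cartan element $(s^{-1}\pi,T)+(s^{-1}\widehat{\pi},\widehat{T})$ and then the ``Moreover'' clause of the twisting procedure, precisely as you spell out. Your version simply makes the two-step equivalence and the finiteness of the relevant sums explicit.
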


\begin{proof}
This is the direct corollary of Proposition \ref{twisted L infty}.
\end{proof}
\subsection{Cohomology theory of relative $\Omega$-family Rota-Baxter algebras}\
\label{sub:cohomology}

In Subsection~\ref{sub:linfty}, we have seen that the Maurer-Cartan elements of the $L_\infty[1]$-algebra $ (s^{-1}\mathfrak{L} \oplus \mathfrak{a}, \{l'_{i}\}_{i=1}^{+\infty})$ correspond to relative $\Omega$-family Rota-Baxter algebras of weight $\lambda$ on the space $A \oplus V$. Then we can deduce the cohomology theory of relative $\Omega$-family Rota-Baxter algebras of weight $\lambda $ from the twisting procedures in the $L_\infty[1]$-algebra $(s^{-1}\mathfrak{L}'\oplus \frak a, \{l_{i}'^{(s^{-1}\pi, T)}\}_{i=1}^{+\infty})$.

\begin{defn}
	Let $(A, \mu, V, \mu_V, l, r, T)$ be a relative $\Omega$-family Rota-Baxter algebra of weight $\lambda$. Define  \textbf{the cochain complex of the $(A,\mu, V,\mu_V,l,r,T)$} to be the cochain complex $(s\frak L'\oplus s^2\frak a, \partial)$ with $\partial=s^2{l'}^{(s^{-1}\pi,T)}_1s^{-2}$   and denote by  $\mathcal{C}^\bullet_{\rm{RelRBA_\lambda}}(A,V)$.
	The cohomology $\mathrm{H}^{\bullet}_{\rm{RelRBA_\lambda}}(A,V)$ of the cochain complex is called  \textbf{the cohomology of the  $\Omega$-family Rota-Baxter algebra $(A,\mu, V,\mu_V, l,r)$ of weight $\lambda$}.
\end{defn}

Let's give a specific description of the cochain complex.
For any
\[ f \in \Hom(A^{\ot n}, A) \bigoplus \bigoplus\limits_{i=0}^{n-1}\Hom(\mathcal{A}^{i, n-i}, V)\subset \frak L' \text{ and } \theta\in\Hom(V^{\ot n-1}, A)\subset \frak a,\]
we have
\begin{align*}
l'^{(s^{-1}\pi, T)}_1(s^{-1}f, \theta)
=& \ \sum_{k=0}^{+\infty}\frac{1}{k!}l'_{k+1}\Big(\underbrace{(s^{-1}\pi, T),\ldots,(s^{-1}\pi, T)}_{k}, (s^{-1}f, \theta)\Big)\\
=& \ l'_2\Big((s^{-1}\pi, T), (s^{-1}f, \theta)\Big)+\frac{1}{2}l'_3\Big((s^{-1}\pi, T), (s^{-1}\pi, T), (s^{-1}f, \theta)\Big)\\
& \ +\sum_{k=3}^{+\infty}\frac{1}{k!}l'_{k+1}\Big(\underbrace{(s^{-1}\pi, T),\ldots,(s^{-1}\pi, T)}_k,(s^{-1}f, \theta)\Big)\\
=& \ l'_2(s^{-1}\pi, s^{-1}f)+l'_2(s^{-1}\pi, \theta)+l'_3(s^{-1}\pi, T, \theta)
+\sum_{k=1}^n\frac{1}{k!}l'_{k+1}(s^{-1}f, \underbrace{T, \ldots, T}_k)\\
=& \ \Big(-s^{-1}[\pi, f]_{\Omega}, \lambda P[\pi, \theta]_{\Omega}+P\big[[\pi, T]_{\Omega}, \theta\big]_{\Omega} \ + \sum_{k=1}^n\frac{1}{k!}\lambda^{n-k}P\big[\ldots[f,\underbrace{T]_{\Omega},\ldots, T}_k\big]_{\Omega} \Big).
\end{align*}
In fact, we have
\begin{align*}
	\Big[\big[[\pi, T]_{\Omega}, T\big]_{\Omega}, \theta\Big]_{\Omega}=& \ 0,\\
	\Big[\ldots\big[[f, \underbrace{T]_{\Omega}, T\big]_{\Omega},\ldots, T\Big]_{\Omega}}_{k}=& \ 0,\,k\geqslant n+1.
\end{align*}

Write
\begin{align*}
	\delta_\pi(f)=& \ [\pi, f]_{\Omega},\\
	d_T(\theta)=& \ \lambda P[\pi, \theta]_{\Omega}+P\big[[\pi, T]_{\Omega}, \theta\big]_{\Omega}\\
	=&\ \lambda[\mu_V, \theta]_{\Omega}+\big[[\pi, T]_{\Omega}, \theta\big]_{\Omega},\\
	h_T(f)=& \  \sum_{k=1}^n\frac{1}{k!}\lambda^{n-k}P\big[\ldots[f, \underbrace{T]_{\Omega},\ldots, T}_k\big]_{\Omega}.
\end{align*}
With the above notions, the coboundary operator on $\mathcal{C}^\bullet_{\rm RelRBA}(A,V)$ is give by
\[\partial(sf, s^2\theta)=
\big(-s\delta_\pi(f), s^{2}d_T(\theta)+s^2h_T(f)\big).\]
Notice that \[\delta_\pi(f)\in \frak L', d_T(\theta)\in \frak a, h_T(f)\in \frak a .\]
Thus we have $(s\frak L', \delta_\pi)$ and $(s\frak a, d_T)$ are also cochain complexes.
\begin{defn}
\begin{itemize}
	\item[(1)] The cochain complex $(s\frak L', \delta_\pi)$, denoted by $\mathcal{C}^\bullet_{\rm AssAct}(A,V)$, is called {\bf the cochain complex of the  AssAct} $(A,\mu, V,\mu_V, l,r)$. The cohomology $\rm H^\bullet_{AssAct}(A,V)$ of the cochain complex is called {{\bf the cohomology of the  AssAct} } $(A,\mu, V,\mu_V, l,r)$.
	\item[(2)] The cochain complex $(s\frak a , d_T)$, denoted by $\mathcal{C}^\bullet_{\rm RelRBO_\lambda}(T)$, is called {\bf the cochain complex of the relative $\Omega$-family Rota-Baxter operator $T$ of weight $\lambda$}. Its cohomology, denoted by $\rm H_{\rm RelRBO_\lambda}^\bullet(T)$, is called \textbf{the cohomology the relative $\Omega$-family Rota-Baxter operator $T$ of weight $\lambda$}.
	\end{itemize}
\end{defn}

These three cochain complexes are combined in the following way.
\begin{prop}
The map $h_T$ defines a cochain map from $\mathcal{C}^\bullet_{\rm AssAct}(A,V)$ to $\mathcal{C}^\bullet_{\rm RelRBO_\lambda}(T)$, i.e., the following diagrams are commutative:
\[\begin{CD}
 		\ldots@>>> {\mathcal{C}_{\rm AssAct}^n(A,V)} @>\delta_\pi >> \mathcal{C}_{\rm AssAct}^{n+1}(A,V) @>\delta_\pi >> \mathcal{C}_{\rm AssAct}^{n+2}(A,V) @>>>\ldots\\
 		@. @V {h_T} VV @V {h_T} VV @V h_T VV @.\\
 		\ldots@>>> {\mathcal{C}_{\rm RelRBO_\lambda}^n(T)} @>d_T >> \mathcal{C}_{\rm RelRBO_\lambda}^{n+1}(T) @>d_T >> \mathcal{C}_{\rm RelRBO_\lambda}^{n+2}(T) @>>>\ldots.
 	\end{CD}\]
 And we have that $\mathcal{C}_{\rm RelRBA_\lambda}(A,V)\cong s \mathrm{Cone}(h_T)$.
 \end{prop}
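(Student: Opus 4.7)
The plan is to deduce the entire proposition from the single fact that $\partial^2 = 0$. By Theorem~\ref{L infty MC and RB}, $(s^{-1}\pi, T)$ is a Maurer--Cartan element of $(s^{-1}\mathfrak{L}' \oplus \mathfrak{a}, \{l_i'\}_{i \geqslant 1})$. The twisting procedure (Proposition~\ref{twisted L infty}) then makes $l_1'^{(s^{-1}\pi, T)}$ a degree $+1$ square-zero operator, so $\partial = s^2 \, l_1'^{(s^{-1}\pi, T)} \, s^{-2}$ also satisfies $\partial^2 = 0$. This already certifies that $\mathcal{C}^\bullet_{\mathrm{RelRBA}_\lambda}(A,V)$ is a cochain complex, and the remaining assertions will be obtained by reading off the individual components of this identity.

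Next, expanding $\partial(sf, s^2\theta) = (-s\delta_\pi(f),\, s^2 d_T(\theta) + s^2 h_T(f))$ twice yields
\[
\partial^2(sf, s^2\theta) = \bigl(s\delta_\pi^2(f),\ s^2 d_T^2(\theta) + s^2 (d_T h_T - h_T \delta_\pi)(f)\bigr) = 0.
\]
Projecting onto the two summands $s\mathfrak{L}'$ and $s^2\mathfrak{a}$, and then specialising in turn to $f = 0$ and to $\theta = 0$, one reads off simultaneously $\delta_\pi^2 = 0$, $d_T^2 = 0$, and $d_T \circ h_T = h_T \circ \delta_\pi$. The first two confirm that $\mathcal{C}^\bullet_{\mathrm{AssAct}}(A,V)$ and $\mathcal{C}^\bullet_{\mathrm{RelRBO}_\lambda}(T)$ are genuine cochain complexes, while the third is precisely the commutativity of the squares in the diagram, i.e.\ $h_T$ is a cochain map.

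For the identification $\mathcal{C}^\bullet_{\mathrm{RelRBA}_\lambda}(A,V) \cong s\,\mathrm{Cone}(h_T)$, I would simply match graded pieces and differentials. With the standard convention $\mathrm{Cone}(h_T)^n = \mathcal{C}^{n+1}_{\mathrm{AssAct}}(A,V) \oplus \mathcal{C}^n_{\mathrm{RelRBO}_\lambda}(T)$, equipped with the differential $(a,b) \mapsto (-\delta_\pi a,\, h_T a + d_T b)$, a shift by $s$ gives
\[
(s\,\mathrm{Cone}(h_T))^n = \mathcal{C}^n_{\mathrm{AssAct}}(A,V) \oplus \mathcal{C}^{n-1}_{\mathrm{RelRBO}_\lambda}(T) = (s\mathfrak{L}')^n \oplus (s\mathfrak{a})^{n-1}.
\]
The tautological bijection $s\theta \leftrightarrow s^2\theta$ then identifies this with $(s\mathfrak{L}')^n \oplus (s^2\mathfrak{a})^n = \mathcal{C}^n_{\mathrm{RelRBA}_\lambda}(A,V)$, and a direct inspection shows that under this bijection the cone differential is transported to exactly $\partial$.

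The only real obstacle I anticipate is bookkeeping: tracking the suspensions $s, s^{-1}$ and the sign conventions carefully enough that the cone identification carries the intended differential. Once $\partial^2 = 0$ is in hand, both the cochain-map property and the mapping-cone description are essentially formal consequences.
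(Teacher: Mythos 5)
Your proposal is correct and follows essentially the same route as the paper, whose proof is the one-line observation that everything follows from the definition of $\partial$ and the identity $\partial^2=0$; your componentwise expansion of $\partial^2(sf,s^2\theta)$ and the degree-matching for the cone are exactly the details that observation leaves implicit.
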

 	\begin{proof}
 		This can be deduced from the definition of $\partial$  and  the fact that $\partial^2=0$.
 	\end{proof}
\begin{cor}
Let $(A, V,l,r,T)$ be a relative $\Omega$-family Rota-Baxter algebra  of weight $\lambda$. Then there is a short exact sequence of the cochain complexs:
\[\begin{CD}
 		0@>>> {s\mathcal{C}_{\rm RelRBO_\lambda}^{\bullet}(T)} @> \mathrm{inc} >> \mathcal{C}_{\rm RelRBA_\lambda}^{\bullet}(A,V) @> \mathrm{proj} >> \mathcal{C}_{\rm AssAct}^{\bullet}(A,V) @>>>0\\
 	\end{CD}\]
where $\mathrm{inc}$ and $\mathrm{proj}$ are the inclusion map and the projection map.

Consequently, there is a long exact sequence of the cohomology groups:
\[\begin{CD}
 		\ldots@>>> {\mathrm H _{\rm AssAct}^{n}(A,V)} @> >> \mathrm H_{\rm RelRBO_\lambda}^{n}(T) @> >> \mathrm H _{\rm RelRBA_\lambda}^{n+1}(A,V)@> >> \mathrm H _{\rm AssAct}^{n+1}(A,V) @>>>\ldots\\
 	\end{CD}\]
\end{cor}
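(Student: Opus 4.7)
The plan is to read the short exact sequence directly off the explicit direct-sum description $\mathcal{C}^{\bullet}_{\rm RelRBA_{\lambda}}(A,V) = s\mathfrak{L}' \oplus s^{2}\mathfrak{a}$ from the previous subsection, viewing it as the standard cone sequence implicit in the isomorphism $\mathcal{C}_{\rm RelRBA_{\lambda}}^{\bullet}(A,V) \cong s\,\mathrm{Cone}(h_{T})$ already noted in the preceding proposition. Concretely I would set
\[
\mathrm{inc}(s^{2}\theta) := (0,\, s^{2}\theta), \qquad \mathrm{proj}(sf,\, s^{2}\theta) := sf,
\]
so that $\mathrm{inc}$ embeds $s\mathcal{C}^{\bullet}_{\rm RelRBO_{\lambda}}(T) = s^{2}\mathfrak{a}$ as the second direct summand and $\mathrm{proj}$ projects onto the first summand $s\mathfrak{L}' = \mathcal{C}^{\bullet}_{\rm AssAct}(A,V)$.

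Next I would verify that $\mathrm{inc}$ and $\mathrm{proj}$ are cochain maps by plugging them into the explicit formula
\[
\partial(sf,\, s^{2}\theta) = \bigl(-s\delta_{\pi}(f),\; s^{2}d_{T}(\theta) + s^{2}h_{T}(f)\bigr).
\]
On the image of $\mathrm{inc}$ the first component vanishes because $f = 0$, giving $\partial\circ\mathrm{inc} = \mathrm{inc}\circ d_{T}$; and $\mathrm{proj}\circ\partial = \delta_{\pi}\circ\mathrm{proj}$ since projection simply discards the $h_{T}$-contribution in the second slot. Exactness at each of the three nodes is then immediate from the direct-sum structure: $\mathrm{inc}$ is injective, $\mathrm{proj}$ is surjective, and $\mathrm{im}(\mathrm{inc}) = \{0\}\oplus s^{2}\mathfrak{a} = \ker(\mathrm{proj})$.

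Finally the long exact sequence follows from the standard zigzag (snake) lemma for a short exact sequence of cochain complexes. The connecting homomorphism is easy to pin down explicitly: given a cocycle $sf$ representing a class in $\mathrm{H}^{n}_{\rm AssAct}(A,V)$, lift it to $(sf,0) \in \mathcal{C}^{n}_{\rm RelRBA_{\lambda}}(A,V)$; apply $\partial$ to obtain $(0,\; s^{2}h_{T}(f))$ using the cocycle condition $\delta_{\pi}(f) = 0$; then pull back along $\mathrm{inc}$ to recover the class $[s\,h_{T}(f)] \in \mathrm{H}^{n}_{\rm RelRBO_{\lambda}}(T)$. Thus, up to the inherent degree shifts, the connecting map is induced by the cochain map $h_{T}$, consistent with the commutative diagram of the preceding proposition. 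I expect no serious obstacle here, since each step is a direct substitution into the formula for $\partial$; the only bookkeeping nuisance is the careful tracking of the two distinct degree shifts $s$ and $s^{2}$ on the kernel and quotient summands.
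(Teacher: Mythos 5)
Your proposal is correct and follows essentially the same route as the paper, which treats this corollary as an immediate consequence of the identification $\mathcal{C}_{\rm RelRBA_\lambda}^{\bullet}(A,V)\cong s\,\mathrm{Cone}(h_T)$ established in the preceding proposition: the short exact sequence is the standard cone sequence read off the direct-sum decomposition, and the long exact sequence follows from the snake lemma with connecting map induced by $h_T$. The only cosmetic point is that $\mathrm{proj}$ intertwines $\partial$ with $-\delta_\pi$ rather than $\delta_\pi$ on the nose, which affects nothing.
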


\begin{remark} \label{remark appendix}
	As we have seen now, the complexes $\mathcal{C}^\bullet_{\rm AssAct}(A,V), \mathcal{C}^\bullet_{\rm RelRBA_\lambda}(A,V)$ and $\mathcal{C}^\bullet_{\rm RelRBO_\lambda}(T)$ contain many direct summands at each degree.  For later use, we need to describe  the coboundary operators $\delta_\pi$, $d_T$ and the cochain map $h_T$  on each component clearly. However, this process is  complicated, so we move it to the Appendix.
\end{remark}

\smallskip

\section{$L_{\infty}[1]$-structure and cohomology theory of absolute $\Omega$-family Rota-Baxter algebras}\
\label{sec:abosulte cohomology}
In this section, we will construct an $L_{\infty}[1]$-algebra structure associated with (absolute)  $\Omega$-family Rota-Baxter algebras  of weight $\lambda$. Then we will see that the twisted $L_\infty[1]$-algebra controls the deformations of the   $\Omega$-family Rota-Baxter algebras of weight $\lambda$, and the cohomology theory of $\Omega$-family Rota-Baxter algebras of weight $\lambda$ can be deduced naturally.

\begin{defn}
	Let $ (A, \mu = \{\mu_{\alpha,\,\beta}\}_{\alpha,\,\beta \in \Omega} )$ be an $\Omega$-associative algebra. A family of linear maps $  T=\{T_\omega\}_{\omega\in \Omega}: A \to A $ is called an (absolute) \textbf{$ \Omega $-family Rota-Baxter  operator of weight $ \lambda $} if it satifies
\[ \mu \circ (T \ot T) = T \circ \big( \mu \circ (T \ot \id) + \mu \circ (\id \ot T) + \lambda \mu \big). \]
i.e., for any $\alpha, \beta\in \Omega$, the following equation holds:
	\[ \mu_{\alpha,\,\beta} \circ (T_\alpha \ot T_\beta) = T_{\alpha\beta} \circ \big( \mu_{\alpha,\,\beta} \circ (T_\alpha \ot \id) + \mu_{\alpha,\,\beta} \circ (\id \ot T_\beta) + \lambda \mu_{\alpha,\,\beta}\big).\]
We call $(A,\mu , T)$ an (absolute)  \textbf{$\Omega$-family Rota-Baxter algebra of weight $ \lambda $}.
\end{defn}

\subsection{$L_{\infty}[1]$-algebras associated with absolute $\Omega$-family Rota-Baxter algebras}\

Now, let's introduce the $L_{\infty}[1] $-algebra,
  whose
Maurer-Cartan elements correspond to  $\Omega$-family Rota-Baxter algebras of weight $\lambda$ bijectively.

	We still use the symbol $ (s^{-1}\mathfrak{L}' \oplus \mathfrak{a}, \{l'_{i}\}_{i=1}^{+\infty}) $ to denote the $L_{\infty}[1] $-algebra structure given by Theorem \ref{modified L infty} when we take $ V $ to be $ A $. In this case,
	\begin{align*}
		\mathfrak{L}'
		& = \bigoplus\limits_{n=0}^{+\infty} \mathfrak{L}'(n+1)
		:= \bigoplus\limits_{n=0}^{+\infty} \left(\mathrm{Hom}_\Omega(A^{\ot n+1},A) \bigoplus \bigoplus\limits_{i=0}^{n} \mathrm{Hom}_\Omega(\mathcal{A}^{i,n+1-i},A)\right), \\ \mathfrak{a} & =\bigoplus\limits_{n=0}^{+\infty} \operatorname{Hom}_\Omega\big(A^{\ot n+1}, A\big).
	\end{align*}
	
	Denote
	\begin{align*}
		\mathfrak{M}(n+1)=&\Hom(A^{\ot n+1}, A),	\mathfrak{M} = \bigoplus\limits_{n=0}^{+\infty} \mathfrak{\mathfrak{M}}(n+1)   , \\
		\frak{h} =& \bigoplus\limits_{n=0}^{+\infty}\Hom(A^{\ot n+1}, A).
	\end{align*}
	Consider the embedding map
	\[  \iota: s^{-1}\mathfrak{M} \oplus \mathfrak{h} \to s^{-1} \mathfrak{L}' \oplus \mathfrak{a} \]
	defined by
	\[ \iota(s^{-1}f) = s^{-1} \widetilde{f}, \iota(\theta) = \theta \]
	for $ f \in \Hom(A^{\ot n}, A) $, $ \theta \in \Hom(A^{\ot n-1}, A) $, where
\[ \widetilde{f}=\left(0, \ldots, 0,  f ; f,  f, \ldots, f, 0\right) \in \mathfrak{L}(n). \]
The sign convention on the right hand side of the above equation is defined in Eq.~\eqref{map decomposition} in Appendix.

Denote by $\mathrm{Im}(\iota)=\iota \left( s^{-1}\mathfrak{M} \oplus \mathfrak{h}\right)$. Then we have
\begin{prop} \label{sub L infty}
	$ (\mathrm{Im}(\iota),\{l'_{i}\}_{i=1}^{+\infty}) $ is an $L_{\infty}[1] $-subalgebra of $ (s^{-1}\mathfrak{L}' \oplus \mathfrak{a}, \{l'_{i}\}_{i=1}^{+\infty}) $.
\end{prop}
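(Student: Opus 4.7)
The plan is to show that $\mathrm{Im}(\iota)$ is closed under every bracket $l'_i$ of Theorem~\ref{modified L infty}; once closure is established, the graded symmetry and generalised Jacobi identities automatically descend from the ambient $L_\infty[1]$-structure. Since $l'_1=0$, closure at arity one is trivial, and the only nonzero brackets to check are $l'_2$ on two elements of $s^{-1}\mathrm{Im}(\iota)$ and $l'_i$ ($i\geqslant 2$) on one element of $s^{-1}\mathrm{Im}(\iota)$ together with $i-1$ elements of $\mathrm{Im}(\iota)\cap \mathfrak{a}$.

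First I would unpack the diagonal extension $f \mapsto \widetilde{f}$ using Eq.~\eqref{map decomposition} of the Appendix: for $f\in\Hom_\Omega(A^{\ot n},A)$, $\widetilde{f}$ is the element of $\mathfrak{L}'(n)$ that, under the identification $V=A$, evaluates to $f$ uniformly on the $\Hom_\Omega(A^{\ot n},A)$ component and on each $\Hom_\Omega(\mathcal{A}^{i,n-i},V)$ component with $0\leqslant i\leqslant n-1$, and vanishes on the purely $V$-input component $\Hom_\Omega(V^{\ot n},V)$. This uniformity is the crucial feature: whatever assignment of $A$-slots versus $V$-slots one feeds into $\widetilde{f}$, the output is the same (up to the $V=A$ identification) as $f$ evaluated on the corresponding tensor in $A^{\ot n}$.

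Next I would verify closure under $l'_2$. Because the $\Omega$-Gerstenhaber bracket is built from the slot-insertions $-\circ_i -$, a slot-by-slot expansion of $[\widetilde{f},\widetilde{g}]_\Omega$ reduces — component by component in $\mathfrak{L}'(m+n-1)$ — to the ordinary $\Omega$-Gerstenhaber bracket $[f,g]_\Omega$ on $\mathfrak{h}$, using that $\widetilde{f}$ and $\widetilde{g}$ act identically on $A$-slots and $V$-slots. Consequently $[\widetilde{f},\widetilde{g}]_\Omega=\widetilde{[f,g]_\Omega}$, so $\iota$ is (up to the shift convention) a morphism of graded Lie algebras and $\mathrm{Im}(\iota)$ is closed under $l'_2$. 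The same calculation shows that for $f\in\mathfrak{M}$ and $\theta\in\mathfrak{h}\subset\mathfrak{a}$, the intermediate bracket $[\widetilde{f},\theta]_\Omega$ inserts $\theta$ (whose codomain is $A$) into each slot of $\widetilde{f}$ and decomposes across $\mathfrak{L}'$ in the same uniform diagonal pattern.

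For $l'_i$ with $i\geqslant 2$ I would then iterate: each bracketing $[\,\cdot\,,\theta_j]_\Omega$ substitutes $\theta_j$ into a slot of the current map, and the uniform behaviour across components is preserved. Applying $P$ at the end keeps precisely those terms with all inputs in $V$ and output in $A$, and these are identified — by $V=A$ — with the analogous iterated insertion $P\big[\ldots[[f,\theta_1]_\Omega,\theta_2]_\Omega\ldots,\theta_{i-1}\big]_\Omega$ performed inside $\mathfrak{h}$. Multiplying by the weight $\lambda^{n-(i-1)}$ still lands in $\mathfrak{h}\subset \mathrm{Im}(\iota)\cap \mathfrak{a}$, which completes the verification. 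The main obstacle is exactly the component-wise bookkeeping: $\widetilde{f}$ is a sum of summands indexed by the placement of $V$-slots and each bracket with $\theta_j$ mixes these summands, so I would package the argument as the single lemma that $\iota$ intertwines the graded Lie brackets and the projections $P$, after which closure under all the $l'_i$ — built uniformly from $[-,-]_\Omega$ and $P$ — is immediate.
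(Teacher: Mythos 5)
Your proposal is correct and takes essentially the same route as the paper: reduce the statement to closure of $\mathrm{Im}(\iota)$ under the $l'_i$, check via the componentwise bracket formula of the Appendix (Eq.~\eqref{special f,g,G}) that $[\widetilde{f},\widetilde{g}]_{\Omega}=\widetilde{[f,g]_{\Omega}}$ because the diagonal extension acts uniformly on all slot-patterns, and observe that the remaining brackets land in $\mathfrak{a}=\mathfrak{h}$ after applying $P$. The only slip is in your description of $\widetilde{f}$: with the convention $\widetilde{f}=(0,\ldots,0,f;f,f,\ldots,f,0)$ the vanishing $V$-valued component is the all-$A$-input one $\Hom(A^{\ot n},V)$ (which is in fact absent from $\mathfrak{L}'(n)$ altogether), whereas on the purely $V$-input component $\Hom(\mathcal{A}^{0,n},V)=\Hom(V^{\ot n},V)$ the entry is $f$ --- as the range $0\leqslant i\leqslant n-1$ in your own sentence already forces, and as the uniformity your argument relies on requires.
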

\begin{proof}
	We need to check that $\rm{Im}(\iota)$ is closed under the action of $l'_{i}$ for all $i\geqslant 1$.
For any $ 1 \leqslant m \leqslant n $, $ f \in \mathfrak{M}(n) $, $ g  \in \mathfrak{M}(m) $, $ \theta_{1}, \dots \theta_{n} \in \mathfrak{h} $, by Theorem~\ref{modified L infty} and  Eq.~\eqref{special f,g,G} in Appendix, we have
\begin{align*}
	l_{2}'(\iota(s^{-1}f),\iota(s^{-1}g))
	&= l_{2}'(s^{-1}\widetilde{f},s^{-1}\widetilde{g}) \\
	&= (-1)^{|f|}s^{-1} [\widetilde{f},\widetilde{g}]_{\Omega} \\
	&= (-1)^{|f|}s^{-1} \left(0,\dots,0, [f,g]_{\Omega};  [f,g]_{\Omega} , \dots,   [f,g]_{\Omega},0\right) \\
	&= \iota \left((-1)^{|f|}s^{-1} [f,g]_{\Omega}\right),
\end{align*}
where $ |f|=n-1 $, and for $ i \geqslant 1 $,
\begin{align*}
	l'(\iota(s^{-1}f), \iota(\theta_{1}), \dots \iota(\theta_{i}))
	&= l'(s^{-1} \widetilde{f}, \theta_{1}, \dots \theta_{i})
	= \iota \left(l'(s^{-1} \widetilde{f}, \theta_{1}, \dots \theta_{i})\right) .
\end{align*}
\end{proof}

Let $p:\mathrm{Im}(\iota)\longrightarrow s^{-1}\mathfrak{M} \oplus \mathfrak{h}$  be the inverse map of $\iota: s^{-1}\mathfrak{M}\oplus \mathfrak{h}\rightarrow \rm{Im}(\iota)$, i.e., $ p(s^{-1} \widetilde{f}) = s^{-1}f$, $ p(\theta):=\theta $, for $ f \in \mathfrak{M}, \theta \in \mathfrak{h} $.
For $k\geqslant 1$, one can define
\[\varrho_{k}(x_1,\ldots,x_k):=p l'_k(\iota(x_1),\ldots,\iota(x_k))\]
for homogenous elements $x_{1},\dots,x_{k}\in s^{-1}\mathfrak{M}\oplus\frak{h}.$
 As a corollary of Proposition \ref{sub L infty},   we have
\begin{thm}
	$(s^{-1}\mathfrak{M}\oplus \frak h, \{\varrho_{i}\}^{+\infty}_{i=1})$ is an $L_\infty[1]$-algebra.
\end{thm}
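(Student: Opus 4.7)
The strategy is pure transport of structure along an isomorphism; essentially all the work has already been done in Proposition~\ref{sub L infty}. First I would observe that $\iota \colon s^{-1}\mathfrak{M} \oplus \mathfrak{h} \to \mathrm{Im}(\iota)$ is, tautologically, a degree-preserving linear bijection: injectivity is immediate since any $f \in \mathfrak{M}(n)$ is recoverable as the first component of $\widetilde{f}$ (and $\theta$ is sent to itself), and surjectivity onto $\mathrm{Im}(\iota)$ holds by definition. Hence the map $p$ introduced just before the theorem is a well-defined, degree-preserving inverse of $\iota$.

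With this in hand, the theorem is formal. By Proposition~\ref{sub L infty}, the pair $\bigl(\mathrm{Im}(\iota),\{l'_i\}_{i=1}^{+\infty}\bigr)$ is an $L_\infty[1]$-algebra. Since $\iota$ is a graded linear isomorphism onto this subalgebra, I would transport the structure back along $\iota$: define the brackets on $s^{-1}\mathfrak{M} \oplus \mathfrak{h}$ by $\varrho_k := p \circ l'_k \circ \iota^{\otimes k}$, which is exactly the definition given in the excerpt. Graded symmetry of each $\varrho_k$ follows immediately from graded symmetry of $l'_k$ because $\iota$ and $p$ preserve degree and commute with permutations. The generalized Jacobi identity for $\{\varrho_i\}$ likewise reduces, after inserting $\iota \circ p = \mathrm{id}$ between consecutive brackets, to the generalized Jacobi identity for $\{l'_i\}$ restricted to $\mathrm{Im}(\iota)$, which holds by Proposition~\ref{sub L infty}.

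I do not anticipate any obstacle: the theorem is really the statement that $\iota$ is an isomorphism of $L_\infty[1]$-algebras from $\bigl(s^{-1}\mathfrak{M} \oplus \mathfrak{h},\{\varrho_i\}\bigr)$ onto $\bigl(\mathrm{Im}(\iota),\{l'_i\}\bigr)$, and the substantive content — that $\mathrm{Im}(\iota)$ is closed under the brackets $l'_k$ — has already been verified. The only items worth spelling out in the actual proof are the bijectivity and degree-preservation of $\iota$ and the one-line verification that transporting the two $L_\infty[1]$-axioms along $\iota$ is purely mechanical.
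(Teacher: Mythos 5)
Your proposal is correct and coincides with the paper's own treatment: the theorem is stated there as an immediate corollary of Proposition~\ref{sub L infty}, the brackets $\varrho_k$ being obtained precisely by transporting the $L_\infty[1]$-structure on $\mathrm{Im}(\iota)$ back along the graded linear isomorphism $\iota$ with inverse $p$. Nothing further is needed.
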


Let us describe the $L_\infty[1]$-algebra structure on $s^{-1}\frakM\oplus \mathfrak{h}$  explicitly. Let $ f \in \mathfrak{M}(n) = \Hom(A^{\ot n} ,A )$, $ g  \in \mathfrak{M}(m) = \Hom(A^{\ot m} ,A ) $, $ \theta_{i}\in \mathfrak{h}(p_{i}) = \Hom(A^{\ot p_{i}} ,A ) $ with $m,n\geqslant 1$, $1 \leqslant i \leqslant n$. Then they have degrees  $ |f|=n-1, |g|=m-1, |\theta_{i}|=p_{i}-1 $ respectively. According to Proposition \ref{sub L infty} , we have
\begin{align*}
	\varrho_{1}
	=& 0, \\
	\varrho_{2}(s^{-1}f,s^{-1}g)
	=& (-1)^{|f|}s^{-1}[f,g]_{\Omega},\\
	\varrho_{n+1}(s^{-1}f,\theta_{1},\dots,\theta_{n})
	=& \sum_{\sigma \in \mathbb{S}_{n}}
	(-1)^{\xi_{1}}
	f \circ  (\theta_{\sigma(1)} \ot \dots \ot \theta_{\sigma(n)}) \\
	&+ \sum_{\sigma \in \mathbb{S}_{n}} \sum_{1\leqslant q_{2} < \dots <q_{n} \leqslant n} \sum_{r=1}^{p_{\sigma(1)}}
	(-1)^{\xi_{2}}
	\theta_{\sigma(1)} \circ_{r} f \circ  (\id \ot \dots \ot \underset{q_{2}}{\theta_{\sigma(2)}}
	\ot \dots
	\ot \underset{q_{n}}{\theta_{\sigma(n)}} \dots \ot \id),\\
	\varrho_{k+1}(s^{-1}f,\theta_{1},\dots,\theta_{k})
	=& \sum_{\sigma \in \mathbb{S}_{k}}
	\sum_{1\leqslant q_{2} < \dots <q_{k} \leqslant n} \sum_{r=1}^{p_{\sigma(1)}}
	(-1)^{\xi_{3}} \lambda^{n-k}
	\theta_{\sigma(1)} \circ_{r} f \circ  (\id \ot \dots \ot \underset{q_{2}}{\theta_{\sigma(2)}}
	\ot \dots
	\ot \underset{q_{k}}{\theta_{\sigma(k)}} \dots \ot \id),
\end{align*}
where
\begin{align*}
	(-1)^{\xi_{1}} =& \varepsilon\left(\sigma ; \theta_1, \ldots, \theta_n\right)
	(-1)^{\sum_{i=2}^{n}(\sum_{j=1}^{i-1}p_{\sigma (j)})|\theta_{\sigma(i)}| },\\
	(-1)^{\xi_{2}} =& \varepsilon\left(\sigma ; \theta_1, \ldots, \theta_n\right)
	(-1)^{
		1+|f||\theta_{\sigma(1)}|
		+  \sum_{i=2}^{n} (q_{i}+\sum_{j=2}^{i-1}p_{\sigma(j)}-i+1)|\theta_{\sigma(i)}| + (r-1)(|f| + \sum_{i=2}^{n}|\theta_{\sigma(i)}|)
	},\\
	(-1)^{\xi_{3}} =& \varepsilon\left(\sigma ; \theta_1, \ldots, \theta_k\right)
	(-1)^{
		1+|f| |\theta_{\sigma(1)}|
		+   \sum_{i=2}^{k} (q_{i}+\sum_{j=2}^{i-1}p_{\sigma(j)}-i+1)|\theta_{\sigma(i)}| + (r-1)(|f| + \sum_{i=2}^{k}|\theta_{\sigma(i)}|)
	}.
\end{align*}

Then we can see that the $L_\infty[1]$-algebra we just introduced can be used to describe the algebraic structre, $\Omega$-family Rota-Baxter algebras of weight $\lambda$, in the follwoing sense:
\begin{thm}\label{ab L infty}
Let $A$ be a vector space. Then the set of $\Omega$-family Rota-Baxter algebra structures of weight $\lambda$ on $A$ corresponds to the set of Maurer-Cartan elements in the $L_\infty[1]$-algebra $(s^{-1}\frakM\oplus \mathfrak{h},\{\varrho_i\}_{i=1}^{+\infty})$ bijectively.
\end{thm}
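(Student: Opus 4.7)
The plan is to derive this theorem as an immediate consequence of Theorem~\ref{L infty MC and RB} together with Proposition~\ref{sub L infty}. By construction, the map $\iota\colon s^{-1}\mathfrak{M}\oplus \mathfrak{h}\to s^{-1}\mathfrak{L}'\oplus \mathfrak{a}$ is an injection of graded vector spaces onto $\mathrm{Im}(\iota)$, and the brackets $\varrho_k = p\circ l'_k\circ \iota^{\otimes k}$ are defined precisely so that $\iota$ becomes an isomorphism of $L_\infty[1]$-algebras onto the subalgebra $(\mathrm{Im}(\iota),\{l'_k\})$. Consequently, an element $(s^{-1}\mu,T)\in s^{-1}\mathfrak{M}\oplus \mathfrak{h}$ is a Maurer-Cartan element of $(s^{-1}\mathfrak{M}\oplus \mathfrak{h},\{\varrho_k\})$ if and only if its image $\iota(s^{-1}\mu,T)=(s^{-1}\widetilde{\mu},T)$ is a Maurer-Cartan element of the ambient $L_\infty[1]$-algebra $(s^{-1}\mathfrak{L}'\oplus \mathfrak{a},\{l'_k\})$.

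Next, I would unpack $\widetilde{\mu}$ for a degree-$0$ datum $\mu\in \mathfrak{M}(2)=\operatorname{Hom}_\Omega(A^{\ot 2},A)$ and $T\in \mathfrak{h}(1)=\operatorname{Hom}_\Omega(A,A)$. Using the decomposition from the appendix, $\widetilde{\mu}$ is precisely the element of $\mathfrak{L}'(2)$ whose $\operatorname{Hom}_\Omega(A^{\ot 2},A)$-component equals $\mu$ and whose three $\operatorname{Hom}_\Omega(\mathcal{A}^{i,2-i},V)$-components (with $V=A$) each equal $\mu$. Equivalently, $\widetilde{\mu}$ encodes the tuple $\pi=\mu+l+r+\mu_V$ with $l=r=\mu_V=\mu$. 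Applying Theorem~\ref{L infty MC and RB}, the element $(s^{-1}\widetilde{\mu},T)$ is a Maurer-Cartan element of the ambient $L_\infty[1]$-algebra if and only if $(A,\mu,A,\mu,\mu,\mu,T)$ is a relative $\Omega$-family Rota-Baxter algebra of weight $\lambda$.

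Finally, I would verify that this particular relative structure is nothing but an absolute one. When $V=A$ and $\mu_V=l=r=\mu$, all three AssAct compatibility identities together with the associativity of $\mu_V$ collapse to the single associativity axiom for $\mu$, so the AssAct axioms amount exactly to saying that $(A,\mu)$ is an $\Omega$-associative algebra acting on itself via the regular bimodule. Under these identifications, the relative Rota-Baxter identity
\[\mu\circ(T\ot T)=T\circ\bigl(l\circ(T\ot \id)+r\circ(\id\ot T)+\lambda\mu_V\bigr)\]
reduces verbatim to the absolute $\Omega$-family Rota-Baxter identity of weight $\lambda$. Combining the three steps yields the claimed bijection. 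The main bookkeeping obstacle is checking that the explicit sign convention for $\widetilde{f}$ in the appendix really produces the tuple $\pi=\mu+l+r+\mu_V$ when all four summands coincide with $\mu$; once this identification is made, the theorem becomes a tautological consequence of Theorem~\ref{L infty MC and RB} restricted to the subalgebra $\mathrm{Im}(\iota)$.
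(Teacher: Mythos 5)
Your proof is correct, but it takes a genuinely different route from the paper. The paper proves Theorem~\ref{ab L infty} by direct computation: it substitutes the degree-$0$ element $(s^{-1}\mu,T)$ into the Maurer--Cartan equation of $(s^{-1}\mathfrak{M}\oplus\mathfrak{h},\{\varrho_i\})$, observes that only $\varrho_2(s^{-1}\mu,s^{-1}\mu)$, $\varrho_2(s^{-1}\mu,T)$ and $\varrho_3(s^{-1}\mu,T,T)$ survive, and reads off the two conditions $[\mu,\mu]_\Omega=0$ and $\mu\circ(T\ot T)=T\circ\mu\circ(T\ot\id)+T\circ\mu\circ(\id\ot T)+\lambda T\circ\mu$ from the explicit formulas for the $\varrho_i$ derived just before the statement. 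You instead transport the problem through the strict $L_\infty[1]$-isomorphism $\iota$ onto $\mathrm{Im}(\iota)\subseteq s^{-1}\mathfrak{L}'\oplus\mathfrak{a}$, invoke Theorem~\ref{L infty MC and RB} to identify Maurer--Cartan elements there with relative structures, and then check that the relative data $(A,\mu,A,\mu,\mu,\mu,T)$ obtained from $\widetilde{\mu}=(0,0,0,\mu;\mu,\mu,\mu,0)$ is exactly an absolute $\Omega$-family Rota--Baxter algebra because all AssAct and bimodule axioms collapse to associativity of $\mu$ when $V=A$ with the regular bimodule and $l=r=\mu_V=\mu$. Both arguments are sound; yours is more conceptual and avoids re-deriving the Maurer--Cartan equation, at the cost of the bookkeeping you flag (verifying that the convention of Eq.~\eqref{map decomposition} really matches $\widetilde{\mu}$ with $\pi=\mu+l+r+\mu_V$), while the paper's direct computation is self-contained and simultaneously exhibits the explicit form of the obstruction, which is then reused to describe the coboundary operator $\partial_{\rm{RBA}_\lambda}$ in the following subsection.
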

\begin{proof}
	Let $\alpha$ be an element of degree $0$ in $ (s^{-1}\mu, T) \in s^{-1}\mathfrak{M}\oplus \frak h$. Then $\alpha=(s^{-1}\mu,T)$ with $ \mu \in \Hom(A^{\ot 2},A) $ and $ T \in \Hom(A,A) $. Substituting $\alpha$ into the Maurer-Cartan equation in $(s^{-1}\mathfrak{M}\oplus \frak h, \{\varrho_{i}\}^{+\infty}_{i=1})$, we have
	\begin{align*}
		&\sum_{n=1}^{\infty} \frac{1}{n !} \varrho_{n}((s^{-1}\mu,T)) \\
		=& \frac{1}{2} \varrho_{2}(s^{-1}\mu,s^{-1}\mu) + \varrho_{2}(s^{-1}\mu,T) + \frac{1}{2} \varrho_{3}(s^{-1}\mu,T,T)\\
		=& \Big( -\frac{1}{2} s^{-1}[\mu,\mu]_{\Omega},
		- \lambda T \circ \mu
		+ \mu \circ (T \ot T)
		- T \circ \mu \circ (T \ot \id)
		- T \circ \mu \circ (\id \ot T) \Big).
	\end{align*}
 So $ (s^{-1}\mu , T) $ is a Maurer-Cartan element if and only if
 \begin{align*}
 	[\mu,\mu]_{\Omega} = 0 , \quad
 	\mu \circ (T \ot T) =
 	 T \circ \mu \circ (T \ot \id)
 	+ T \circ \mu \circ (\id \ot T)
 	+ \lambda T \circ \mu,
 \end{align*}
that is, $ (A,\mu,T) $ is an $\Omega$-family Rota-Baxter algebra of weight $ \lambda $.
\end{proof}

Let  $ (A,\mu,T) $ be an $\Omega$-family Rota-Baxter algebra of weight $\lambda$. By Theorem \ref{ab L infty}, we obtain that $ (s^{-1}\mu,T)$ is a Maurer-Cartan element in the $L_\infty[1]$-algebra $ (s^{-1}\mathfrak{M} \oplus \mathfrak{h}, \{\varrho_{i}\}_{i=1}^{+\infty}) $. Now we are ready to give the twisted $ L_{\infty}[1] $-algebra that controls deformatioms of the $\Omega$-family Rota-Baxter algebra  $(A,\mu,T)$ of weight $\lambda$.

\begin{thm} \label{ab twisted L infty deformation}
Let $(A,\mu,T)$ be an $\Omega$-family Rota-Baxter algebra of weight $\lambda$. The twisted $L_\infty[1]$-algebra  $(s^{-1}\mathfrak{M} \oplus \frak h, \{\varrho_{i}^{(s^{-1}\mu, T)}\}_{i=1}^{+\infty})$ induced by the Maurer-Cartan element $(s^{-1}\mu,T)$ controls the deformations of the $\Omega$-family Rota-Baxter algebra $(A,\mu,T)$ of weight $\lambda$. Explicitly, let $\widehat{\mu}:A \ot A\rightarrow  A, \widehat{T}:A\rightarrow A$ be two linear maps, then $(A, \mu+\widehat{\mu}, T+\widehat{T})$ is an $\Omega$-family Rota-Baxter algebra of  weight $\lambda$ if and only if $(s^{-1}\widehat{\mu}, \widehat{T})$ is a Maurer-Cartan element in the $L_\infty[1]$-algebra $(s^{-1}\mathfrak{M} \oplus \frak h, \{\varrho_{i}^{(s^{-1}\mu, T)}\}_{i=1}^{+\infty}).$
\end{thm}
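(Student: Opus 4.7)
The plan is to deduce this theorem as an immediate consequence of two earlier results: the general twisting procedure (Proposition \ref{twisted L infty}) and the characterization of $\Omega$-family Rota-Baxter algebra structures as Maurer-Cartan elements (Theorem \ref{ab L infty}). This mirrors exactly the argument used for the relative version in Theorem \ref{twisted L infty deformation}, where only a one-line appeal to Proposition \ref{twisted L infty} was needed.

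First, I would invoke Theorem \ref{ab L infty} in both directions. Since $(A,\mu,T)$ is an $\Omega$-family Rota-Baxter algebra of weight $\lambda$, the pair $(s^{-1}\mu,T)$ is indeed a Maurer-Cartan element of $(s^{-1}\mathfrak{M}\oplus\mathfrak{h},\{\varrho_i\}_{i=1}^{+\infty})$, so the twisting procedure is legitimate and the twisted brackets $\{\varrho_i^{(s^{-1}\mu,T)}\}_{i=1}^{+\infty}$ are well defined. Moreover, the sum $(s^{-1}\mu,T)+(s^{-1}\widehat{\mu},\widehat{T})=(s^{-1}(\mu+\widehat{\mu}),T+\widehat{T})$ is a degree $0$ element of $s^{-1}\mathfrak{M}\oplus\mathfrak{h}$, so Theorem \ref{ab L infty} again applies to it.

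Next, I would apply Proposition \ref{twisted L infty} with $\alpha=(s^{-1}\mu,T)$ and $\widehat{\alpha}=(s^{-1}\widehat{\mu},\widehat{T})$. This proposition yields that $\widehat{\alpha}$ is a Maurer-Cartan element of $(s^{-1}\mathfrak{M}\oplus\mathfrak{h},\{\varrho_i^{(s^{-1}\mu,T)}\}_{i=1}^{+\infty})$ if and only if $\alpha+\widehat{\alpha}$ is a Maurer-Cartan element of the untwisted $L_\infty[1]$-algebra $(s^{-1}\mathfrak{M}\oplus\mathfrak{h},\{\varrho_i\}_{i=1}^{+\infty})$. Combined with Theorem \ref{ab L infty}, the latter condition translates exactly into the statement that $(A,\mu+\widehat{\mu},T+\widehat{T})$ is an $\Omega$-family Rota-Baxter algebra of weight $\lambda$, which finishes the proof.

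There is no real obstacle here, since the entire machinery has been set up in advance: the $L_\infty[1]$-algebra $(s^{-1}\mathfrak{M}\oplus\mathfrak{h},\{\varrho_i\})$ was constructed via the embedding $\iota$ from the larger $L_\infty[1]$-algebra governing the relative case, and its Maurer-Cartan theory already encodes the $\Omega$-family Rota-Baxter axioms. The only subtlety worth noting explicitly is that the infinite sums defining $\varrho_i^{(s^{-1}\mu,T)}$ and the Maurer-Cartan sum for $(s^{-1}\widehat{\mu},\widehat{T})$ both truncate: for inputs of degree $0$, the iterated brackets $\bigl[\cdots[[\mu,T]_{\Omega},T]_{\Omega},\ldots,T\bigr]_{\Omega}$ and their images under $P$ vanish after finitely many $T$'s (at most two $T$'s before the projection is forced to be zero), exactly as in the calculation following Theorem \ref{L infty MC and RB}, so no convergence issues arise and the argument is purely formal.
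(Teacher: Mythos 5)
Your proposal is correct and follows exactly the paper's own route: the paper proves this theorem by citing Proposition \ref{twisted L infty} and Theorem \ref{ab L infty} as a direct corollary, which is precisely the two-step argument you spell out (including the harmless observation that the relevant sums truncate for degree-$0$ inputs). The only pedantic remark is that Proposition \ref{twisted L infty} is stated as a one-way implication, but the converse direction needed for the ``only if'' part follows from the same computation (twisting by $-\alpha$ recovers the original brackets), so no genuine gap arises.
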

\begin{proof}
	This is a direct corollary of Proposition \ref{twisted L infty} and Theorem \ref{ab L infty}.
\end{proof}

\begin{remark} In the previous construction, if we start with a general graded  vector space $A$ rather than an ordinary vector space, $s^{-1}\frak{M}\oplus \frak h$ will also be an $L_\infty[1]$-algebra. By solving the Maurer-Cartan equation in this $L_\infty[1]$-algebra, we will obtain the notion of homotopy $\Omega$-family Rota-Baxter algebras of weight $\lambda$. Modifying the methods in \cite{WZ2}, we can prove that the dg operad governing homotopy $\Omega$-family Rota-Baxter algebras of weight $\lambda$ is quasi-isomorphic to the operad of $\Omega$-family Rota-Baxter algebras of weight $\lambda$. Hence this dg operad should provide a model of the operad of $\Omega$-family Rota-Baxter algebras  of weight $\lambda$ in some suitable sense.
\end{remark}

\subsection{Cohomology theory of absolute $\Omega$-family Rota-Baxter algebras}\

Using the twisting procedure in the $L_\infty[1]$-algebra associated with $\Omega$-family Rota-Baxter algebras of weight $\lambda$, we can define the cohomology of $\Omega$-family Rota-Baxter algebras  of weight $\lambda$ immediately. In the next Section, we will see that the formal deformations of  $\Omega$-family Rota-Baxter algebras of weight $\lambda$ are controlled by this cohomology theory.

\begin{defn}Let $(A,\mu,T)$ be an $\Omega$-family Rota-Baxter algebra of weight $ \lambda $. Then $(s^{-1}\mu, T)$ is a Maurer-Cartan element of $(s^{-1}\mathfrak{M} \oplus \frak h, \{\varrho_{i}\}_{i=1}^{+\infty})$. The cochain complex $(s\frakM\oplus s^2\frak h,s^2\varrho_1^{(s^{-1}\mu, T)}s^{-2})$, denoted by $(\mathcal{C}_{\rm RBA_\lambda}^\bullet(A,T), \partial_{\rm{RBA}_\lambda})$, is called \textbf{ the cochain complex of the $\Omega$-family Rota-Baxter algebra $(A,\mu,T)$  of weight $\lambda$}. The cohomology $\rm H^\bullet_{\rm RBA_\lambda}(A,T)$ of the cochain complex called \textbf{the cohomology of the $\Omega$-family Rota-Baxter algebra $(A,\mu,T)$ of weight $\lambda$}.
	\end{defn}

Let's describe $\mathcal{C}_{\rm RBA_\lambda}^\bullet(A,T)$ explicitly. Firstly, the component of  $\mathcal{C}_{\rm RBA_\lambda}^\bullet(A,T)$ on each degree is given as
\begin{align*}
	\mathcal{C}^0_{\rm{RBA}_\lambda}(A, T)=& \ 0,\\
	\mathcal{C}^1_{\rm{RBA}_\lambda}(A, T)=& \
	\mathcal{C}^1_{\rm{Alg}}(A),\\
	\mathcal{C}^n_{\rm{RBA}_\lambda}(A, T)=& \
	\mathcal{C}^n_{\rm{Alg}}(A)\oplus \mathcal{C}^{n-1}_{\rm{RBO}_\lambda}(T),\,n\geqslant 2,
\end{align*}
where
\begin{eqnarray*}
\mathcal{C}^0_{\rm Alg}(A)=\mathcal{C}^{0}_{\rm{RBO}_\lambda}(T) = 0,	\mathcal{C}^n_{\rm Alg}(A)=\mathcal{C}^{n}_{\rm{RBO}_\lambda}(T) = \Hom(A^{\ot n},A), \quad n \geqslant 1.
	\end{eqnarray*}

 Let $f \in \Hom(A^{\ot n}, A) $ and $\theta\in\Hom(A^{\ot n-1}, A)$. By Proposition~\ref{twisted L infty}, the precise formula of $\varrho^{(s^{-1}\mu, T)}_1$ is given by the following:
\begin{align*}
	\varrho^{(s^{-1}\mu, T)}_1(s^{-1}f, \theta)
	=& \ \sum_{k=0}^{+\infty}\frac{1}{k!}\varrho_{k+1}\Big(\underbrace{(s^{-1}\mu, T),\ldots,(s^{-1}\mu, T)}_{k}, (s^{-1}f, \theta)\Big)\\
	=& \ \Big( \varrho_2(s^{-1}\mu, s^{-1}f) , \varrho_2(s^{-1}\mu, \theta)+\varrho_3(s^{-1}\mu, T, \theta)
	+\sum_{k=1}^n\frac{1}{k!}\varrho_{k+1}(s^{-1}f, \underbrace{T, \ldots, T}_k) \Big)\\
	=& \ \Big(-s^{-1} \delta_{\Alg}(f), \partial_{\RBO}(\theta) + \Phi(f)\Big),
\end{align*}
where
\begin{align*}
	\delta_{\Alg}(f) =& [\mu, f]_{\Omega} \\
	=& \mu\circ(f\ot\id)+(-1)^{n-1}\mu\circ(\id\ot f)
	+\sum_{i=1}^n(-1)^{n-i+1}f \circ_i \mu, \\
	\partial_{\RBO}(\theta)
	=&\ \varrho_2(s^{-1}\mu, \theta)+\varrho_3(s^{-1}\mu, T, \theta)\\
	=& \ (-1)^{n}\left(\mu\circ(T\ot\theta)-T\circ \mu \circ(\id\ot\theta)\right)+\sum_{i=1}^{n-1}(-1)^{n-i}\theta \circ_{i}
	( \mu \circ(T\ot\id)+ \mu \circ(\id\ot T)+\lambda \mu)
	\\
	&+\mu\circ(\theta\ot T)-T\circ  \mu \circ(\theta\ot\id),\\
	\Phi(f)
	=&\sum_{k=1}^n\frac{1}{k!}\varrho_{k+1}(s^{-1}f, \underbrace{T, \ldots, T}_k)\\
	=& f\circ(\underbrace{T\ot\ldots\ot T}_{n})\\
	&+\sum_{k=0}^{n-1}\lambda^{n-k-1}\sum_{i_1,\ldots,i_{k+1}\geqslant 0}T\circ f\circ(\id^{\ot i_1}\ot T\ot\id^{\ot i_2}\ot\ldots\ot\id^{\ot i_k}\ot T\ot\id^{\ot^{i_{k+1}}}).
\end{align*}
Then the coboundary operator $\partial_{\rm{RBA}_\lambda}$ is the following:
\[\partial_{\rm{RBA}_\lambda}\big(sf,s^2\theta)=
\big(s\delta_{\rm{Alg}}(f), s^2(-\partial_{\rm{RBO}_\lambda}(\theta)-\Phi(f))\big),\,\forall
(f,\theta) \in \mathcal{C}^n_{\rm{RBA}_\lambda}(A, T).
\]
Notice that $(\mathcal{C}_{\rm RBO_\lambda}^{\bullet+1}(T), -\partial_{{\rm RBO}_\lambda})$ is a subcomplex of $(\mathcal{C}_{\rm RBA_\lambda}^\bullet(A,T),\partial_{\rm{RBA}_\lambda})$.
\begin{defn}\label{Definition: cochain complex of Rota-Baxter operator}
	The cochain complex $(\mathcal{C}^\bullet_{\rm RBO_\lambda}(T), \partial_{\mathrm{RBO}_\lambda})$ is called \textbf{the cochain complex of the $\Omega$-family Rota-Baxter operator $T $ of weight $\lambda$}. The cohomology of $(\mathcal{C}^\bullet_{\rm RBO_\lambda}(T), \partial_{\mathrm{RBO}_\lambda})$, denoted by $\rm H_{\rm RBO_\lambda}^\bullet(T)$, is called \textbf{the cohomology of the $\Omega$-family Rota-Baxter operator $T $ of weight $\lambda$}.
	\end{defn}

Similar with before, we have:
\begin{prop}\label{coro A complex}
	$\Phi $ is a cochain map from the Hochschild cochain complex $\big( \mathcal{C}_{\Alg}^\bullet(A), \delta_{\Alg} \big)$ to $\big( \mathcal{C}_{\mathrm{RBO}}^\bullet(T), \partial_{\rm{RBO}_\lambda} \big)$, i.e., satisfying the following commutative diagrams:
	\[\begin{CD}
		\ldots@>>> {\mathcal{C}_{\rm Alg}^n(A)} @>\delta_{\rm Alg} >> \mathcal{C}_{\rm Alg}^{n+1}(A) @>\delta_{\rm Alg} >> \mathcal{C}_{\rm Alg}^{n+2}(A) @>>>\ldots\\
		@. @V {\Phi} VV @V {\Phi} VV @V \Phi VV @.\\
		\ldots@>>> {\mathcal{C}_{\rm RBO_\lambda}^n(T)} @>\partial_{\rm RBO_\lambda} >> \mathcal{C}_{\rm RBO_\lambda}^{n+1}(T) @>\partial_{\rm RBO_\lambda} >> \mathcal{C}_{\rm RBO_\lambda}^{n+2}(T) @>>>\ldots.
	\end{CD}\]
\end{prop}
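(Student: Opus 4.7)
The plan is to derive the cochain map identity $\Phi \circ \delta_{\Alg} = \partial_{\mathrm{RBO}_\lambda} \circ \Phi$ as an automatic consequence of the square-zero condition on the total coboundary operator $\partial_{\mathrm{RBA}_\lambda}$ introduced just before the proposition. This avoids a term-by-term expansion of the $\varrho_{k+1}$-brackets, which would be unpleasant but possible as a backup strategy.

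First I would recall that $(s^{-1}\mu, T)$ is a Maurer-Cartan element of the $L_\infty[1]$-algebra $(s^{-1}\mathfrak{M} \oplus \mathfrak{h}, \{\varrho_i\}_{i=1}^{+\infty})$ by Theorem~\ref{ab L infty}. By the twisting procedure (Proposition~\ref{twisted L infty}), the twisted $1$-bracket $\varrho_1^{(s^{-1}\mu, T)}$ is a differential on $s^{-1}\mathfrak{M} \oplus \mathfrak{h}$, so its conjugate $\partial_{\mathrm{RBA}_\lambda} = s^2 \varrho_1^{(s^{-1}\mu, T)} s^{-2}$ satisfies $\partial_{\mathrm{RBA}_\lambda}^2 = 0$ on the total complex $\mathcal{C}^\bullet_{\mathrm{RBA}_\lambda}(A, T)$.

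Second I would apply $\partial_{\mathrm{RBA}_\lambda}^2$ to an element of the form $(sf, 0)$ with $f \in \mathcal{C}^n_{\Alg}(A)$. Using the explicit formula
\[
\partial_{\mathrm{RBA}_\lambda}(sf, s^2\theta) = \bigl(s\,\delta_{\Alg}(f),\, s^2(-\partial_{\mathrm{RBO}_\lambda}(\theta) - \Phi(f))\bigr)
\]
given before the statement, one computes $\partial_{\mathrm{RBA}_\lambda}(sf, 0) = (s\,\delta_{\Alg}(f),\, -s^2\Phi(f))$ and then
\[
\partial_{\mathrm{RBA}_\lambda}^2(sf, 0) = \bigl(s\,\delta_{\Alg}^2(f),\ s^2\bigl(\partial_{\mathrm{RBO}_\lambda}(\Phi(f)) - \Phi(\delta_{\Alg}(f))\bigr)\bigr).
\]
Since $\delta_{\Alg}^2 = 0$ follows from $[\mu,\mu]_\Omega = 0$ (associativity of the $\Omega$-associative product), the first component vanishes automatically, and the vanishing of the second component is exactly the cochain map identity.

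The main technical point will be keeping track of the suspension signs when conjugating $\varrho_1^{(s^{-1}\mu, T)}$ by $s^{\pm 2}$ to read off the precise signs in the formula for $\partial_{\mathrm{RBA}_\lambda}$; once this bookkeeping is done the result is essentially a one-line consequence of the Maurer-Cartan formalism. No serious obstacle is expected, and this same strategy will also give $\partial_{\mathrm{RBO}_\lambda}^2 = 0$ by evaluating $\partial_{\mathrm{RBA}_\lambda}^2$ on elements of the form $(0, s^2\theta)$, which is consistent with Definition~\ref{Definition: cochain complex of Rota-Baxter operator} and reinforces the correctness of the computation.
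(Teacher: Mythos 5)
Your proposal is correct and is precisely the paper's argument: the authors also deduce the cochain-map identity directly from $\partial_{\rm{RBA}_\lambda}^2=0$, which you have simply made explicit by evaluating $\partial_{\rm{RBA}_\lambda}^2$ on $(sf,0)$ and reading off the second component. No gaps; the sign bookkeeping you flag works out exactly as you computed.
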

\begin{proof}
	This is  a direct consequence of the fact that $\partial_{\rm{RBA}_\lambda}^2=0$.
\end{proof}

\begin{prop}
	Let $(A, \mu, T)$ be an $\Omega$-family Rota-Baxter algebra of weight $\lambda$. Then there is a short exact sequence of the cochain complexes:
	\[\begin{CD}
		0@>>> {s\mathcal{C}^{\bullet}_{\rm{RBO}_\lambda}(T)} @> \mathrm{inc} >> \mathcal{C}^{\bullet}_{\rm{RBA}_\lambda}(A,T) @> \mathrm{proj} >> \mathcal{C}^{\bullet}_{\rm{Alg}}(A) @>>>0\\
	\end{CD}\]
	where $\mathrm{inc}$ and $\mathrm{proj}$ are the inclusion map and the projection map.
	
	Consequently, there is a long exact sequence of the cohomology groups:
	\[\begin{CD}
		\ldots@>>> {\mathrm{H}^{n}_{\rm{Alg}}(A)} @> >> \mathrm{H}^{n}_{\rm{RBO}_\lambda}(T) @> >> \mathrm{H}^{n+1}_{\rm{RBA}_\lambda}(A,T)@> >> \mathrm{H}^{n+1}_{\rm{Alg}}(A) @>>>\ldots
	\end{CD}\]
\end{prop}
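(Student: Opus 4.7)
The plan is to establish the short exact sequence by checking that $\mathrm{inc}$ and $\mathrm{proj}$ are cochain maps and that the sequence is pointwise exact, then invoke the standard zig-zag construction to deduce the long exact sequence in cohomology. First I would make the two maps explicit using the direct sum decomposition $\mathcal{C}^n_{\rm{RBA}_\lambda}(A, T) = \mathcal{C}^n_{\rm{Alg}}(A)\oplus \mathcal{C}^{n-1}_{\rm{RBO}_\lambda}(T)$ for $n\geqslant 2$ (and $\mathcal{C}^1_{\rm{RBA}_\lambda}(A,T)=\mathcal{C}^1_{\rm{Alg}}(A)$): the inclusion $\mathrm{inc}$ sends $s^2\theta$ to $(0,s^2\theta)$ and $\mathrm{proj}$ sends $(sf,s^2\theta)$ to $sf$. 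Exactness at each degree then follows immediately from this decomposition, since $\mathrm{inc}$ is injective onto the second summand and $\mathrm{proj}$ is the surjection onto the first with kernel the image of $\mathrm{inc}$.

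Next, I would verify compatibility with the differentials. Using the explicit formula
\[
\partial_{\rm{RBA}_\lambda}(sf,s^2\theta) = \bigl(s\delta_{\rm{Alg}}(f),\ s^2(-\partial_{\rm{RBO}_\lambda}(\theta)-\Phi(f))\bigr),
\]
we get $\partial_{\rm{RBA}_\lambda}(\mathrm{inc}(s^2\theta)) = (0,-s^2\partial_{\rm{RBO}_\lambda}(\theta))$, which agrees with the image under $\mathrm{inc}$ of the shifted differential on $s\mathcal{C}^{\bullet}_{\rm{RBO}_\lambda}(T)$; the sign is precisely the one produced by the suspension convention $\partial=s^2\varrho_1^{(s^{-1}\mu,T)}s^{-2}$. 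Likewise $\mathrm{proj}(\partial_{\rm{RBA}_\lambda}(sf,s^2\theta))=s\delta_{\rm{Alg}}(f)=\delta_{\rm{Alg}}(\mathrm{proj}(sf,s^2\theta))$, so $\mathrm{proj}$ is a cochain map. (Alternatively, one may simply observe that the identity $\partial_{\rm{RBA}_\lambda}^2 = 0$, which holds because $(s^{-1}\mu,T)$ is a Maurer-Cartan element by Theorem \ref{ab L infty}, automatically forces these compatibilities, and also forces the fact just recorded in the preceding proposition that $\Phi$ is itself a cochain map.)

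Once the short exact sequence of cochain complexes is established, the long exact sequence in cohomology is immediate from the standard zig-zag/snake lemma of homological algebra. I would finish by identifying the connecting homomorphism $\mathrm{H}^{n}_{\rm{Alg}}(A)\to \mathrm{H}^{n}_{\rm{RBO}_\lambda}(T)$: given a cocycle $f\in\mathcal{C}^n_{\rm{Alg}}(A)$, lift to $(sf,0)\in\mathcal{C}^n_{\rm{RBA}_\lambda}(A,T)$, apply $\partial_{\rm{RBA}_\lambda}$ to obtain $(0,-s^2\Phi(f))$, and pull back along $\mathrm{inc}$ to recover $-s\Phi(f)$. Thus the connecting map is $-[\Phi]$, which also reconfirms that $\Phi$ descends to cohomology as shown in the preceding proposition.

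The only mildly delicate point is bookkeeping for the suspension shifts, ensuring that $s\mathcal{C}^{n-1}_{\rm{RBO}_\lambda}(T)$ sits in the correct degree $n$ of $s\mathcal{C}^{\bullet}_{\rm{RBO}_\lambda}(T)$ to match the second summand of $\mathcal{C}^n_{\rm{RBA}_\lambda}(A,T)$, and that the sign of $\partial_{\rm{RBO}_\lambda}$ inherited from $\varrho_2(s^{-1}\mu,-)+\varrho_3(s^{-1}\mu,T,-)$ agrees with the one appearing in $\partial_{\rm{RBA}_\lambda}$ up to the $s^{\pm 2}$ conjugation; aside from this the entire argument is formal and reduces to a direct calculation.
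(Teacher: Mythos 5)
Your proposal is correct and follows essentially the same route the paper takes: the short exact sequence is read off from the direct sum decomposition $\mathcal{C}^n_{\rm{RBA}_\lambda}(A,T)=\mathcal{C}^n_{\rm{Alg}}(A)\oplus\mathcal{C}^{n-1}_{\rm{RBO}_\lambda}(T)$ and the triangular form of $\partial_{\rm{RBA}_\lambda}$ (with $\partial_{\rm{RBA}_\lambda}^2=0$ guaranteeing all compatibilities, exactly as in the preceding proposition on $\Phi$), and the long exact sequence is the standard consequence. Your identification of the connecting homomorphism as the map induced by $-\Phi$ is a correct and useful addition, consistent with the mapping-cone description.
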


\smallskip

\section{Formal deformations and cohomological interpretations}~\label{sec:formal deformations}\

In this section, we mainly study the formal deformations of relative (resp. absolute) $\Omega$-family Rota-Baxter algebras of weight $\lambda$. And  we will see that the formal deformations of relative (resp. absolute) $\Omega$-family Rota-Baxter algebras  of weight $\lambda$ can be interpretated by the cohomologies of relative (resp. absolute) $\Omega$-family Rota-Baxter algebras  of weight $\lambda$ we have introduced before. And we obtain that the infinitesimals of equivalent formal deformations correspond to same cohomogy class. Thus, the cohomology theory we introduce here is the right cohomology theory for relative (resp. absolute) $\Omega$-family Rota-Baxter algebras of weight $\lambda$.

\subsection{Formal deformations of relative $\Omega$-family Rota-Baxter algebras}\


Let $(A, \mu, V, \mu_V, l, r, T)$ be a relative $\Omega$-family Rota-Baxter algebra of weight $\lambda$. Consider the space $A[[t]]$ (resp. $V[[t]])$ of formal power series in $t$ with coefficients in $A$ (resp. $V$). Then $A[[t]]$ and $V[[t]]$ are both $\bfk[[t]]$-modules.
\begin{defn}
	Let $(A, \mu, V, \mu_V, l, r, T)$
 be a relative $\Omega$-family Rota-Baxter algebra of weight $\lambda$. {\bf A one-parameter formal deformation} of $(A, \mu, V, \mu_V, l, r, T)$ consists of a tuple $(\mu_t, \mu_{V_t}, l_t, r_t, T_t)$ of formal power series of the form
	\begin{align*}
		\mu_t=\sum_{i\geqslant 0}\mu_it^i,\quad {\mu_{V}}_{t}=\sum_{i\geqslant 0}{\mu_{V}}_{i}t^i,\quad l_t=\sum_{i\geqslant 0}l_it^i,\quad r_t=\sum_{i\geqslant 0}r_it^i,\quad T_t=\sum_{i\geqslant 0}T_it^i,
	\end{align*}
	where $
	\mu_i = \{{\mu_i}_{\alpha, \, \beta}\}_{\alpha, \, \beta \in \Omega} \in \Hom(A \ot A , A),
	{\mu_{V}}_{i} = \{{{\mu_{V}}_{i}}_{\alpha, \, \beta}\}_{\alpha, \, \beta \in \Omega} \in \Hom(V\ot V,V),
	l_i = \{{l_i}_{\alpha, \, \beta}\}_{\alpha, \, \beta \in \Omega}\in\Hom(A\ot V, A),
	r_i = \{{r_i}_{\alpha, \, \beta}\}_{\alpha, \, \beta \in \Omega} \in \Hom(V\ot A, A),
	T_i = \{{T_i}_{\omega}\}_{\omega \in \Omega} \in \Hom(V, A)$ for all $i\geqslant 0$ with $\mu_0=\mu, l_0=l, r_0=r , {\mu_{V}}_{0} = {\mu_{V}}$ and $T_0=T$ such that
	$(A[[t]], \mu_t, V[[t]], {\mu_{V}}_{t}, l_t, r_t, T_t)$ is a relative $\Omega$-family Rota-Baxter algebra of weight $\lambda$ over the ring $\bfk[[t]].$
\end{defn}

It follows that the quintuple $(\mu_t, {\mu_{V}}_{t}, l_t, r_t, T_t)$ is a formal one-parameter deformation of relative $\Omega$-family Rota-Baxter algebra
$(A, \mu, V, \mu_V, l, r, T)$ of weight $\lambda$ if and only if the following equations hold:
\begin{align*}
	\mu_t\circ(\mu_t\ot\id)= \  &\mu_t\circ(\id\ot\mu_t),\quad l_t\circ(\mu_t\ot\id)=l_t\circ(\id\ot l_t),\\
	r_t\circ(r_t\ot\id)=& \  r_t\circ(\id\ot\mu_t),\quad r_t\circ(l_t\ot\id)=l_t\circ(\id\ot r_t),\\
	{\mu_{V}}_{t}\circ(l_t\ot\id)=& \ l_t\circ(\id\ot{\mu_{V}}_{t}),\quad {\mu_{V}}_{t}\circ(r_t\ot\id)={\mu_{V}}_{t}\circ(\id\ot l_t),\\
	r_t\circ({\mu_{V}}_{t}\ot\id)=& \ {\mu_{V}}_{t}\circ(\id\ot r_t),\quad{\mu_{V}}_{t}\circ({\mu_{V}}_{t}\ot\id)={\mu_{V}}_{t}\circ(\id\ot{\mu_{V}}_{t}),\\
	\mu_t\circ(T_t\ot T_t)=& \ T_t\circ(l_t\circ(T_t\ot\id )+r_t\circ(\id\ot T_t)+\lambda {\mu_{V}}_{t}).
\end{align*}
Expanding these equations and comparing the coefficient of $t^n$, we obtain  that: for any $n\geqslant 0$,
\begin{align}
	\label{eq:deform1} \sum_{i=0}^n\mu_i\circ(\mu_{n-i}\ot\id)=&\sum_{i=0}^n\mu_i\circ(\id\ot\mu_{n-i}),
	\quad\sum_{i=0}^nl_i\circ(\mu_{n-i}
	\ot\id)=\sum_{i=0}^n l_i\circ(\id\ot l_{n-i}),\\
	\label{eq:deform2} \sum_{i=0}^n r_i\circ(r_{n-i}\ot\id)=&\sum_{i=0}^n r_i\circ(\id\ot \mu_{n-i}),\quad\sum_{i=0}^n r_i\circ(l_{n-i}\ot\id)=\sum_{i=0}^nl_i\circ(\id\ot r_{n-i}),\\
	\label{eq:deform3} \sum_{i=0}^n\mu_{V_i}\circ(l_{n-i}\ot\id)=&\sum_{i=0}^n l_i\circ(\id\ot{\mu_{V}}_{n-i}),\quad\sum_{i=0}^n{\mu_{V}}_i\circ(r_{n-i}\ot\id)=\sum_{i=0}^n{\mu_{V}}_i\circ(\id\ot l_{n-i}),\\
	\label{eq:deform4} \sum_{i=0}^n r_i\circ({\mu_{V}}_{n-i}\ot\id)=&\sum_{i=0}^n{\mu_{V}}_i\circ(\id\ot r_{{n-i}}),\quad
	\sum_{i=0}^n{\mu_{V}}_i\circ({\mu_{V}}_{n-i}\ot\id)=\sum_{i=0}^n{\mu_{V}}_i\circ(\id\ot {\mu_{V}}_{n-i}),\\
	\label{eq:deform5} \sum_{\substack{i+j+k=n\\ i, j,k\geqslant 0}}\mu_i\circ(T_j\ot T_k)=& \ \sum_{\substack{i+j+k=n\\ i, j,k\geqslant 0}}T_i\circ l_j\circ(T_k\ot \id)+\sum_{\substack{i+j+k=n\\ i, j,k\geqslant 0}}T_i\circ r_j\circ(\id\ot T_k)+\sum_{\substack{i+j=n\\ i, j\geqslant 0}}\lambda T_i\circ {\mu_{V}}_j.
\end{align}

Note that Eqs.~ \eqref{eq:deform1}-\eqref{eq:deform5} hold trivially for $n=0$ as $(A, \mu, V, \mu_V, l, r, T)$ is a relative $\Omega$-family Rota-Baxter algebra of weight $\lambda$.
Denote by$$ \pi_t
=\sum_{i\geqslant 0}\pi_it^i
:= \mu_{t} + l_{t} + r_{t} + {\mu_{V}}_{t},$$
that is $ \pi_{i} = \mu_{i} + l_{i} + r_{i} + {\mu_{V}}_{i} $. And the pair
$ (\pi_1, T_1) $ is called {\bf  the infinitesimal of the 1-parameter formal deformation}
$(A[[t]], \mu_t, V[[t]], {\mu_{V}}_{t}, l_t, r_t, T_t)$
of the relative $\Omega$-family Rota-Baxter algebra $(A, \mu, V, \mu_V, l, r, T)$ of weight $\lambda$.

\begin{prop}
	Let $(A[[t]], \mu_t, V[[t]], {\mu_{V}}_{t}, l_t, r_t, T_t)$ be a 1-parameter formal deformation of the relative
	$\Omega$-family Rota-Baxter algebra  $(A, \mu, V, \mu_V, l, r, T)$
	of weight $\lambda$. Then the infinitesimal
	$(
	\pi_{1} = \{\pi_{1,\,\alpha,\,\beta}\}_{\alpha,\,\beta \in \Omega}
	,$ $T_{1} = \{T_{1,\,\omega}\}_{\omega\in\Omega})$ is a 2-cocycle in the cochain complex
	$\mathcal{C}_{\rm RelRBA_\lambda}^\bullet(A,,V)$.
\end{prop}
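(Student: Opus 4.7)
The plan is to read off the 2-cocycle condition directly from the order-$t$ terms of the deformation equations, using the explicit description of the differential $\partial$ on $\mathcal{C}^\bullet_{\rm RelRBA_\lambda}(A,V)$ given in Subsection~\ref{sub:cohomology}. Recall that an element $(sf, s^2\theta)$ of degree $2$ sits in the component where $f \in \frak L'(2)$ and $\theta \in \Hom(V, A)$, and $\partial(sf, s^2\theta) = (-s\delta_\pi(f), s^2 d_T(\theta) + s^2 h_T(f))$, so being a 2-cocycle amounts to the two conditions $\delta_\pi(f) = [\pi, f]_\Omega = 0$ and $d_T(\theta) + h_T(f) = 0$.

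First I would collect the Eqs.~\eqref{eq:deform1}--\eqref{eq:deform4} at $n=1$. Writing $\pi_t = \mu_t + l_t + r_t + {\mu_V}_t$ so that $\pi_1$ records all four linearized structure maps, these eight identities are precisely the vanishing of the order-$t$ coefficient of $[\pi_t, \pi_t]_\Omega = 0$, which is $2[\pi, \pi_1]_\Omega = 0$. Hence $\delta_\pi(\pi_1) = [\pi,\pi_1]_\Omega = 0$, which is the first component of the cocycle equation. (The cross-terms in the $\Omega$-Gerstenhaber bracket $[\pi,\pi_1]_\Omega$ split according to which of $A\oplus V$ their arguments lie in, and match the eight separate identities one by one.)

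Next I would extract the order-$t$ coefficient of Eq.~\eqref{eq:deform5}, which reads
\[
\mu\circ(T_1\ot T) + \mu\circ(T\ot T_1) + \mu_1\circ(T\ot T) = T_1\circ\bigl(l\circ(T\ot\id) + r\circ(\id\ot T) + \lambda\mu_V\bigr) + T\circ\bigl(l_1\circ(T\ot\id) + l\circ(T_1\ot\id) + r_1\circ(\id\ot T) + r\circ(\id\ot T_1) + \lambda{\mu_V}_1\bigr).
\]
Using the formulas for $d_T$ and $h_T$ from Subsection~\ref{sub:cohomology}, namely $d_T(T_1) = \lambda[\mu_V, T_1]_\Omega + \bigl[[\pi, T]_\Omega, T_1\bigr]_\Omega$ and $h_T(\pi_1) = \lambda P[\pi_1, T]_\Omega + \tfrac{1}{2}P\bigl[[\pi_1, T]_\Omega, T\bigr]_\Omega$ (the same formula that arose in the proof of Theorem~\ref{L infty MC and RB}, applied to $\pi_1$ in place of $\pi$), I would rearrange the rearranged order-$t$ equation so that all terms involving $T_1$ assemble into $d_T(T_1)$ and all terms involving $\pi_1 = \mu_1 + l_1 + r_1 + {\mu_V}_1$ assemble into $h_T(\pi_1)$. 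This yields $d_T(T_1) + h_T(\pi_1) = 0$, completing the cocycle condition.

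The main obstacle is purely bookkeeping: making sure the signs and the component-wise split of the $\Omega$-Gerstenhaber bracket $[\pi, f]_\Omega$ on $A\oplus V$ really do reproduce each of the eight AssAct identities, and that the combinatorial coefficients in $h_T(\pi_1)$ (the factor $\tfrac{1}{2}$ coming from $l'_3$) correctly account for both $l$- and $r$-symmetric contributions appearing in the linearized Rota-Baxter identity. Given the explicit computation of $\tfrac{1}{2}P[[\pi, T]_\Omega, T]_\Omega$ carried out in the proof of Theorem~\ref{L infty MC and RB}, the verification is essentially a specialization of that calculation with $\pi$ replaced by $\pi_1$ plus the symmetrized $T \leftrightarrow T_1$ terms, so no new identity is needed.
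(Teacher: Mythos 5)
Your proposal is correct and follows essentially the same route as the paper: extract the coefficient of $t$ in the deformation equations, identify Eqs.~\eqref{eq:deform1}--\eqref{eq:deform4} at $n=1$ with $\delta_\pi(\pi_1)=[\pi,\pi_1]_\Omega=0$ (the paper lists the eight identities explicitly where you package them as the $t$-coefficient of $[\pi_t,\pi_t]_\Omega=0$, an equivalent reformulation), and rearrange the linearized Rota--Baxter identity \eqref{eq:deform5} into $h_T(\pi_1)=-d_T(T_1)$ using the explicit formulas for $d_T$ and $h_T$. No gap.
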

\begin{proof}
	When $n=1$, Eqs.~\eqref{eq:deform1}-\eqref{eq:deform4}
	become
	\begin{align*}
		\mu_{1} \circ (\mu \ot \id) + \mu \circ (\mu_{1} \ot \id) &= \mu_{1} \circ (\id \ot \mu) + \mu \circ (\id \ot \mu_{1}),\\
		l_{1} \circ (\mu \ot \id) + l \circ (\mu_{1} \ot \id) &= l_{1} \circ (\id \ot l) + l \circ (\id \ot l_{1}),\\
		r_{1} \circ (r \ot \id) + r \circ (r_{1} \ot \id) &= r_{1} \circ (\id \ot \mu) + r \circ (\id \ot \mu_{1}),\\
		r_{1} \circ (l \ot \id) + r \circ (l_{1} \ot \id) &= l_{1} \circ (\id \ot r) + l \circ (\id \ot r_{1}),\\
		{\mu_{V}}_{1} \circ (l \ot \id) + {\mu_{V}} \circ (l_{1} \ot \id) &= l_{1} \circ (\id \ot {\mu_{V}}) + l \circ (\id \ot {\mu_{V}}_{1}),\\
		{\mu_{V}}_{1} \circ (r \ot \id) + {\mu_{V}} \circ (r_{1} \ot \id) &= {\mu_{V}} \circ (\id \ot l) + {\mu_{V}} \circ (\id \ot l_{1}),\\
		r_{1} \circ ({\mu_{V}} \ot \id) + r \circ ({\mu_{V}} \ot \id) &= {\mu_{V}} \circ (\id \ot r) + {\mu_{V}} \circ (\id \ot r_{1}),\\
		{\mu_{V}}_{1} \circ ({\mu_{V}} \ot \id) + {\mu_{V}} \circ ({\mu_{V}} \ot \id) &= {\mu_{V}} \circ (\id \ot {\mu_{V}}) + {\mu_{V}} \circ (\id \ot {\mu_{V}}_{1}),
	\end{align*}
which
	imply that $\delta_\pi(\pi_1)=0$ by Eq.~\eqref{relative delta} in the Appendix. And Eq.~\eqref{eq:deform5} becomes
	\begin{align*}
		&\mu_1\circ(T\ot T)-(T\circ r_1\circ(\id\ot T)+T\circ l_1\circ(T\ot\id)+\lambda T\circ\mu_{V_1})\\
		=&-\Big(\mu\circ(T\ot T_1)-T \circ(\id\ot T_1)\Big)
		+\Big(T_1\circ r\circ(\id\ot T)+T_1\circ l\circ(T\ot\id)+\lambda T_1\circ\mu_V\Big)\\
		&-\Big(\mu\circ(T_1\ot T)-T\circ l\circ(T_1\ot\id)\Big),
	\end{align*}
	so we have
	$h_T(\pi_1)=-d_T(T_1)$ by Eqs.~\eqref{relative partial}-\eqref{relative chain map} in the Appendix. Thus $(\pi_1, T_1)$ is a $2$-cocycle in $ \mathcal{C}^{\bullet}_{\rm{RelRBA}_\lambda}(A,V) $.
\end{proof}

\subsection{Formal deformations of absolute $\Omega$-family Rota-Baxter algebras}\

In this subsection, we study formal deformations of absolute Rota-Baxter algebras of weight $\lambda$. Let $(A, \mu, T)$ be an $\Omega$-family Rota-Baxter algebra of weight $\lambda$. We consider the space $A[[t]]$ of formal power series in $t$ with coefficients in $A$. Then $A[[t]]$ is a $\bfk[[t]]$-modules.
\begin{defn}
	Let $(A, \mu, T)$
	be an $\Omega$-family Rota-Baxter algebra of weight $\lambda$. {\bf A one-parameter formal deformation} of $(A, \mu, T)$ consists of a pair $(\mu_t, T_t)$ of two formal power series of the form
	\begin{align*}
		\mu_t=\sum_{i\geqslant 0}\mu_it^i,\quad T_t=\sum_{i\geqslant 0}T_it^i,
	\end{align*}
	where $
	\mu_i = \{{\mu_i}_{\alpha, \, \beta}\}_{\alpha, \, \beta \in \Omega} \in \Hom(A \ot A , A),
	T_i = \{{T_i}_{\omega}\}_{\omega \in \Omega} \in \Hom(A, A)$ for all $i\geqslant 0$ with $\mu_0=\mu$ and $T_0=T$ such that
	$(A[[t]], \mu_t, T_t)$ is an $\Omega$-family Rota-Baxter algebra of weight $\lambda$ over the ring $\bfk[[t]].$
\end{defn}

Power series families $ \mu_{t} $ and
$ T_{t}$
determine a 1-parameter formal deformation of $\Omega$-family Rota-Baxter algebra $(A,\mu,T)$ of weight $\lambda$ if and only if
the following equations hold
\begin{eqnarray*}
	\mu_{t,\,\alpha\beta,\,\gamma}(\mu_{t,\,\alpha,\,\beta} \ot \id) &=& \mu_{t,\,\alpha,\,\beta\gamma}( \id \ot \mu_{t,\,\beta,\,\gamma} ),\\
	\mu_{t,\,\alpha,\,\beta}(T_{t,\alpha} \ot T_{t,\,\beta} )&=& T_{t,\alpha\beta}\Big(\mu_{t,\,\alpha,\,\beta}(\id \ot T_{t,\,\beta} )+\mu_{t,\,\alpha,\,\beta}(T_{t,\alpha} \ot \id)+\lambda \mu_{t,\,\alpha,\,\beta} \Big).
\end{eqnarray*}
Expanding these equations and comparing the coefficient of $t^n$, we obtain  that
$\mu_{i}$ and $T_{i}$
have to  satisfy: for any $n\geqslant 0$,
\begin{equation}\label{Eq: deform eq for  products in RBA}
\sum_{i=0}^n\mu_{i,\,\alpha\beta,\, \gamma}\circ(\mu_{n-i,\,\alpha,\,\beta}\ot \id)
=\sum_{i=0}^n\mu_{i,\,\alpha,\,\beta \gamma}\circ(\id\ot \mu_{n-i,\,\beta,\, \gamma}),\end{equation}
\begin{equation}\label{Eq: Deform RB operator in RBA} \begin{array}{rcl}
	\sum\limits_{i+j+k=n\atop i, j, k\geqslant 0}	\mu_{i,\,\alpha,\,\beta}\circ(T_{j,\,\alpha}\ot T_{k,\,\beta})&=&\sum\limits_{i+j+k=n\atop i, j, k\geqslant 0} T_{i,\,\alpha\beta}\circ \mu_{j,\,\alpha,\,\beta}\circ (\id\ot T_{k,\,\beta})\\
	&  &+\sum\limits_{i+j+k=n\atop i, j, k\geqslant 0} T_{i,\,\alpha\beta}\circ\mu_{j,\,\alpha,\,\beta}\circ (T_{k,\,\alpha}\ot \id)+\lambda\sum\limits_{i+j=n\atop i, j \geqslant 0}T_{i,\,\alpha\beta}\circ\mu_{j,\,\alpha,\,\beta}.
\end{array}\end{equation}

Obviously, when $n=0$, the above conditions are exactly the associativity of $\mu$ and that $T$ is an $\Omega$-family Rota-Baxter operator of weight $\lambda$ with respect to $\mu$. The pair $(\mu_1 = \{{\mu_{1}}_{\alpha,\,\beta}\}_{\alpha,\,\beta \in \Omega}, T_1 = \{{T_1}_{\omega}\}_{\omega \in \Omega})$
is called {\bf  the infinitesimal of the 1-parameter formal deformation}  $(A[[t]], \mu_t, T_t)$
of  the $\Omega$-family Rota-Baxter algebra  $(A, \mu, T)$ of weight $\lambda$.

\begin{prop}\label{Prop: Infinitesimal is 2-cocyle}
Let
$(A[[t]],\mu_t, T_t)$
be a 1-parameter formal deformation of the
$\Omega$-family Rota-Baxter algebra
$(A,\mu,T)$
of weight $\lambda$. Then the infinitesimal
$(\mu_1 = \{{\mu_{1}}_{\alpha,\,\beta}\}_{\alpha,\,\beta \in \Omega}, T_1 = \{{T_1}_{\omega}\}_{\omega \in \Omega})$
is a 2-cocycle in the cochain complex
$\mathcal{C}_{\RBA}^\bullet(A,T)$.
\end{prop}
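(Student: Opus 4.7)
The plan is to mimic the proof of the corresponding statement for relative $\Omega$-family Rota-Baxter algebras (Proposition~4.2 in the excerpt), adapted to the absolute setting and using the explicit description of $\partial_{\rm{RBA}_\lambda}$ given in Section~\ref{sec:abosulte cohomology}. Concretely, I need to verify two equations:
\begin{align*}
\delta_{\rm{Alg}}(\mu_1) &= 0, \\
\partial_{\rm{RBO}_\lambda}(T_1) + \Phi(\mu_1) &= 0,
\end{align*}
which together exactly say that $\partial_{\rm{RBA}_\lambda}(s\mu_1, s^2 T_1) = 0$ in the cochain complex $\mathcal{C}_{\rm RBA_\lambda}^\bullet(A,T)$.

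First I would extract the coefficient of $t^1$ from Equation~(\ref{Eq: deform eq for  products in RBA}). This yields
\[
\mu_1\circ(\mu\ot\id) + \mu\circ(\mu_1\ot\id) = \mu_1\circ(\id\ot\mu) + \mu\circ(\id\ot\mu_1),
\]
which is precisely the statement $[\mu, \mu_1]_{\Omega} = 0$, i.e., $\delta_{\rm{Alg}}(\mu_1) = 0$. This is the standard Gerstenhaber-style argument, now adapted to the $\Omega$-Gerstenhaber bracket, and requires essentially no computation beyond unraveling definitions.

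Next, and this is the main step, I would extract the coefficient of $t^1$ from Equation~(\ref{Eq: Deform RB operator in RBA}). This gives, for all $\alpha,\beta\in\Omega$,
\begin{align*}
&\mu_{1,\,\alpha,\,\beta}\circ(T_\alpha\ot T_\beta) + \mu_{\alpha,\,\beta}\circ(T_{1,\alpha}\ot T_\beta) + \mu_{\alpha,\,\beta}\circ(T_\alpha \ot T_{1,\beta})\\
=\;& T_{1,\alpha\beta}\circ\bigl(\mu_{\alpha,\,\beta}\circ(\id \ot T_\beta)+\mu_{\alpha,\,\beta}\circ(T_\alpha\ot\id)+\lambda\mu_{\alpha,\,\beta}\bigr)\\
&+ T_{\alpha\beta}\circ\bigl(\mu_{1,\,\alpha,\,\beta}\circ(\id\ot T_\beta) + \mu_{\alpha,\,\beta}\circ(\id\ot T_{1,\beta}) + \mu_{1,\,\alpha,\,\beta}\circ(T_\alpha\ot \id) \\
&\qquad\qquad + \mu_{\alpha,\,\beta}\circ(T_{1,\alpha}\ot \id) + \lambda \mu_{1,\,\alpha,\,\beta}\bigr).
\end{align*}
Rearranging the terms, and comparing with the explicit formula for $\partial_{\rm{RBO}_\lambda}$ applied to $T_1$, the terms involving $T_1$ will assemble precisely into $\partial_{\rm{RBO}_\lambda}(T_1)$ (applied with $n = 2$ in the defining formula). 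The remaining terms, which all involve $\mu_1$, should assemble into $\Phi(\mu_1)$: indeed, for $f = \mu_1 \in \Hom(A^{\ot 2}, A)$, the definition of $\Phi$ specializes to
\[
\Phi(\mu_1) = \mu_1\circ(T\ot T) - T\circ\mu_1\circ(T\ot\id) - T\circ\mu_1\circ(\id\ot T) - \lambda\, T\circ\mu_1,
\]
which, up to an overall sign dictated by the desuspensions in $\varrho_1^{(s^{-1}\mu, T)}$, matches exactly the $\mu_1$-part of the rearranged deformation equation.

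The main obstacle I anticipate is sign bookkeeping: the coboundary $\partial_{\rm{RBA}_\lambda}$ is defined through the twisted $L_\infty[1]$-bracket $\varrho_1^{(s^{-1}\mu, T)}$ preceded and followed by suspensions/desuspensions, and the explicit formulas for $\partial_{\rm{RBO}_\lambda}$ and $\Phi$ include alternating signs and powers of $\lambda$. I would resolve this by writing the coefficient-of-$t$ equation in the form ``everything on one side'' and then applying Proposition~\ref{twisted L infty} and Theorem~\ref{ab twisted L infty deformation}: since $(\mu, T) + t(\mu_1, T_1) \bmod t^2$ is a Maurer-Cartan element of the $L_\infty[1]$-algebra over $\bfk[t]/(t^2)$, the element $(s^{-1}(t\mu_1), tT_1)$ is a Maurer-Cartan element of the twisted $L_\infty[1]$-algebra $(s^{-1}\frakM\oplus \frak h, \{\varrho_i^{(s^{-1}\mu,T)}\})$ modulo $t^2$, which is exactly the linear (2-cocycle) condition $\varrho_1^{(s^{-1}\mu, T)}(s^{-1}\mu_1, T_1) = 0$. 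Passing through the suspension this is $\partial_{\rm{RBA}_\lambda}(s\mu_1, s^2 T_1) = 0$, completing the proof.
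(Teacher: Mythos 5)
Your proposal is correct and follows essentially the same route as the paper: extract the coefficient of $t$ from Eqs.~(\ref{Eq: deform eq for  products in RBA}) and (\ref{Eq: Deform RB operator in RBA}) and identify the resulting identities with $\delta_{\rm{Alg}}(\mu_1)=0$ and $\Phi(\mu_1)=-\partial_{\rm{RBO}_\lambda}(T_1)$, which is exactly the cocycle condition $\partial_{\rm{RBA}_\lambda}(s\mu_1,s^2T_1)=0$. Your closing remark that the signs can be organized by viewing $(s^{-1}\mu,T)+t(s^{-1}\mu_1,T_1)$ as a Maurer--Cartan element modulo $t^2$ is a clean conceptual gloss on the same computation rather than a different argument.
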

\begin{proof} When $n=1$, Eqs.~(\ref{Eq: deform eq for  products in RBA}) and (\ref{Eq: Deform RB operator in RBA})  become
	\[\begin{array}{rcl}
		\mu_{1,\,\alpha\beta,\,\gamma}\circ(\mu_{\alpha,\,\beta}\ot \id)
		+\mu_{\,\alpha\beta,\,\gamma}\circ(\mu_{1,\,\alpha,\,\beta}\ot \id)
		&=&  \mu_{1,\,\alpha,\,\beta \gamma}\circ(\id\ot \mu_{\beta,\, \gamma})
		+\mu_{\,\alpha,\,\beta \gamma}\circ (\id\ot \mu_{1,\,\beta,\, \gamma}),
	\end{array}\]
	and
	\[\begin{array}{cl}
		&\mu_{1,\,\alpha,\,\beta} (T_\alpha\ot T_\beta)
		-\left(T_{\alpha\beta}\circ\mu_{1,\,\alpha,\,\beta}\circ(\id\ot T_\beta)
		+T_{\alpha\beta}\circ\mu_{1,\,\alpha,\,\beta}\circ(T_\alpha\ot \id)
		+\lambda T_{\alpha\beta}\circ \mu_{1,\,\alpha,\,\beta}\right)\\
		=&
		-\left(\mu_{\alpha,\,\beta}\circ(T_\alpha\ot T_{1,\,\beta})-T_{\alpha\beta}\circ\mu_{\alpha,\,\beta}\circ(\id\ot T_{1,\,\beta})\right)
		-\left(\mu_{\alpha,\,\beta}\circ(T_{1,\,\alpha}\ot T_\beta)-T_{\alpha\beta}\circ\mu_{\alpha,\,\beta}\circ(T_{1,\,\alpha}\ot\id)\right)\\
		&+\left(T_{1,\alpha\beta}\circ\mu_{\alpha,\,\beta}\circ(\id\ot T_\beta)+T_{1,\,\alpha\beta}\circ\mu_{\alpha,\,\beta}\circ(T_\alpha\ot \id)+\lambda T_{1,\,\alpha\beta}\circ \mu_{\alpha,\,\beta}\right).
	\end{array}\]
	
	Note that  the first equation is exactly
	$\delta_{\rm{Alg}}(\mu_{1})_{\alpha,\,\beta,\,\gamma}=0$, then $\delta_{\rm{Alg}}(\mu_{1})=0\in \mathcal{C}^\bullet_{\Alg}(A)$,
	and the second equation is equivalent to
	$ \Phi(\mu_{1})_{\alpha,\,\beta}=-\partial_{\rm{RBO_\lambda}}(T_{1})_{\alpha,\,\beta} $,
	then
	$ \Phi(\mu_{1})=-\partial_{\rm{RBO_\lambda}}(T_{1}) \in \mathcal{C}^\bullet_{\RBO}(T). $
	So $(\mu_1, T_1)$
	is a 2-cocycle in $\mathcal{C}^\bullet_{\RBA}(A)$.
\end{proof}

\begin{defn}
	Let $(A[[t]], \mu_t, T_t)$ and $(A[[t]], \mu'_t, T'_t)$
	 be two $1$-parameter formal deformations of $\Omega$-family Rota-Baxter algebra $(A, \mu, T)$
of weight $\lambda$. {\bf A formal isomorphism} from $(A[[t]], \mu'_t, T'_t)$
	to $(A[[t]], \mu_t, T_t)$
	is a family of power series $ \psi_{t} = \{\psi_{t,\theta}\}_{\theta \in \Omega}$, where $\psi_{t,\,\theta}=\sum_{i=0}\psi_{i,\,\theta}t^i: A[[t]]\rightarrow A[[t]]$
	and $\psi_{i,\,\theta}: A\rightarrow A$ are linear maps with $\psi_{0,\,\theta}=\id_A$, such that:
	\begin{eqnarray}\label{Eq: equivalent deformations}
		\psi_{t,\,\alpha\beta}\circ \mu_{t,\,\alpha,\,\beta}' &=& \mu_{t,\,\alpha,\,\beta}\circ (\psi_{t,\,\alpha}\ot \psi_{t,\,\beta}),\\
		\psi_{t,\,\omega}\circ T_{t,\,\omega}'&=&T_{t,\,\omega}\circ\psi_{t,\,\omega}. \label{Eq: equivalent deformations2}
	\end{eqnarray}
	In this case, we say that the two 1-parameter formal deformations $(A[[t]], \mu_t, T_t)$
	and $(A[[t]], \mu'_t, T'_t)$
	are {\bf equivalent}.
\end{defn}

\smallskip

Given an $\Omega$-family Rota-Baxter algebra $(A, \mu, T)$  of weight $\lambda$
, the power series
$\mu_t$ with $\mu_{i}=\delta^{i}_{0} \mu$
and
$T_t$ with $T_{i}=\delta^{i}_{0} T$
make
$(A[[t]], \mu_t, T_t)$
into a $1$-parameter formal deformation of $(A, \mu, T)$,
where $ \delta^{i}_{j} $
is the  Kronecker delta symbol. Formal deformations equivalent to this one are called {\bf trivial}.
\smallskip

\begin{thm}
	The infinitesimals of two equivalent 1-parameter formal deformations of $(A, \mu, T)$
	are in the same cohomology class in $\rmH^\bullet_{\RBA}(A,T)$.
\end{thm}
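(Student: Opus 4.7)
The plan is to exhibit an explicit $1$-cochain whose $\partial_{\mathrm{RBA}_\lambda}$-coboundary equals the difference $(\mu_1-\mu'_1,\,T_1-T'_1)$ in $\mathcal{C}^2_{\mathrm{RBA}_\lambda}(A,T)$. Let $\psi_t = \sum_{i\geqslant 0}\psi_i t^i$, with $\psi_0 = \id_A$, be a formal isomorphism from $(A[[t]],\mu'_t,T'_t)$ to $(A[[t]],\mu_t,T_t)$. The natural candidate primitive is $s\psi_1 \in s\Hom(A,A) = \mathcal{C}^1_{\mathrm{RBA}_\lambda}(A,T)$, and my goal is to verify
\[
\partial_{\mathrm{RBA}_\lambda}(s\psi_1) \;=\; \big(s(\mu_1-\mu'_1),\;s^2(T_1-T'_1)\big),
\]
up to the canonical signs built into $\partial_{\mathrm{RBA}_\lambda}$.

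My first step is to expand Eq.~\eqref{Eq: equivalent deformations} as a formal power series in $t$ and extract the coefficient of $t$; a straightforward rearrangement yields $\mu'_1-\mu_1 = \delta_{\Alg}(\psi_1)$, recognising the right-hand side as the Hochschild coboundary of $\psi_1$ via the explicit formula for $\delta_{\Alg}$. The second step is analogous for Eq.~\eqref{Eq: equivalent deformations2}, whose $t^1$-coefficient immediately gives the elementary identity $T'_1-T_1 = T\circ\psi_1-\psi_1\circ T$. The key conceptual point is then to recognise this difference as $-\Phi(\psi_1)$, where $\Phi$ is the cochain map of Proposition~\ref{coro A complex}; this amounts to unfolding the defining formula $\Phi(\psi_1) = \varrho_2(s^{-1}\psi_1,T)$ in the $L_\infty[1]$-algebra constructed in Proposition~\ref{sub L infty}.

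Assembling the two identities via the explicit description of $\partial_{\mathrm{RBA}_\lambda}$ recorded just after Definition~\ref{Definition: cochain complex of Rota-Baxter operator} then produces $\partial_{\mathrm{RBA}_\lambda}(s\psi_1)$ as claimed, yielding $[\mu_1,T_1] = [\mu'_1,T'_1]$ in $\rmH^2_{\mathrm{RBA}_\lambda}(A,T)$. I anticipate the only non-routine obstacle to be sign bookkeeping: the interplay between suspensions, Koszul signs inside $\varrho_2$, and the twist at the Maurer-Cartan element $(s^{-1}\mu,T)$ must conspire to align the elementary commutator $T\circ\psi_1-\psi_1\circ T$ with $-\Phi(\psi_1)$. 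This is precisely the sign-tracking exercise already carried out in the proof of Proposition~\ref{Prop: Infinitesimal is 2-cocyle}, so no fundamentally new input is required beyond careful computation.
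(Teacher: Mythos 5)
Your proposal is correct and follows essentially the same route as the paper's own proof: extract the $t^1$-coefficients of the equivalence equations, identify $\mu_1'-\mu_1=\delta_{\Alg}(\psi_1)$ and $T_1'-T_1=T\circ\psi_1-\psi_1\circ T=-\Phi(\psi_1)$, and conclude that the difference of infinitesimals is the coboundary $\partial_{\mathrm{RBA}_\lambda}(\psi_1,0)$. The only blemish is that your opening display asserts $\partial_{\mathrm{RBA}_\lambda}(s\psi_1)=\big(s(\mu_1-\mu_1'),\,s^2(T_1-T_1')\big)$, which carries the opposite overall sign to what your own computation in the body yields, but since a sign change of a coboundary is still a coboundary this does not affect the conclusion that the two classes agree in $\rmH^2_{\RBA}(A,T)$.
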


\begin{proof}
	Let $ \psi : (A[[t]], \mu'_t, T'_t) \rightarrow  (A[[t]], \mu_t, T_t)$	
	be a formal isomorphism.
	Expanding the identities and collecting coefficients of $t$, we get from Eqs.~(\ref{Eq: equivalent deformations}) and (\ref{Eq: equivalent deformations2})
	\begin{eqnarray*}
		\mu_{1,\,\alpha,\,\beta}'&=&\mu_{1,\,\alpha,\,\beta}+\mu_{\alpha,\,\beta}\circ(\id\ot \psi_{1,\,\beta})-\psi_{1,\,\alpha\beta}\circ\mu_{\alpha,\,\beta}+\mu_{\alpha,\,\beta}\circ(\psi_{1,\,\alpha}\ot \id),\\
		T'_{1,\,\omega}&=&T_{1,\,\omega}+T_\omega\circ\psi_{1,\,\omega}-\psi_{1,\,\omega}\circ T_\omega,
	\end{eqnarray*}
	that is, we have\[(\mu_{1,\,\alpha,\,\beta}',T'_{1,\,\omega})-(\mu_{1,\,\alpha,\,\beta},T_{1,\,\omega})
	=({\delta_{\rm{Alg}}(\psi_1)}_{\alpha,\,\beta}, -{\Phi(\psi_1)}_{\omega})=\partial_{\rm{RBA}_\lambda}(\psi_1,0)_{\alpha,\,\beta,\,\omega}
	,\]
	hence $ (\mu_{1}',T'_{1})-(\mu_{1},T_{1})
	=\partial_{\rm{RBA}_\lambda}(\psi_1,0)\in  \mathcal{C}^\bullet_{\RBA}(A). $
\end{proof}

\begin{defn}
	An $\Omega$-family Rota-Baxter algebra $ (A, \mu, T) $ of weight $\lambda$
	 is said to be {\bf rigid} if its arbitrary 1-parameter formal deformation is trivial.
\end{defn}

\begin{thm}
	Let  $ (A, \mu, T) $ be an  $\Omega$-family Rota-Baxter algebra of weight $\lambda$. If  $\rmH^2_{\RBA}(A)=0$, then $ (A, \mu, T) $ is rigid.
\end{thm}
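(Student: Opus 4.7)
The plan is the classical Gerstenhaber-style rigidity argument: given an arbitrary $1$-parameter formal deformation $(A[[t]], \mu_t, T_t)$ of $(A, \mu, T)$, I will construct a sequence of formal isomorphisms that inductively strip away its nonzero Taylor coefficients, converging in the $t$-adic topology to an equivalence with the trivial deformation.

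For the base step, Proposition~\ref{Prop: Infinitesimal is 2-cocyle} says that the infinitesimal $(\mu_1, T_1)$ is a $2$-cocycle in $\mathcal{C}^\bullet_{\RBA}(A,T)$. The hypothesis $\rmH^2_{\RBA}(A)=0$ yields some $\psi_1 \in \mathcal{C}^1_{\RBA}(A,T) = \Hom(A,A)$ with $\partial_{\RBA}(\psi_1, 0) = -(\mu_1, T_1)$, i.e.\ $\delta_{\Alg}(\psi_1) = -\mu_1$ and $\Phi(\psi_1) = T_1$. Since the family $\psi_t := \{\id_A + \psi_{1,\omega} t\}_{\omega \in \Omega}$ is invertible as formal power series, the rules $\psi_{t,\alpha\beta} \circ \mu'_{t,\alpha,\beta} = \mu_{t,\alpha,\beta} \circ (\psi_{t,\alpha} \otimes \psi_{t,\beta})$ and $\psi_{t,\omega} \circ T'_{t,\omega} = T_{t,\omega} \circ \psi_{t,\omega}$ define an equivalent deformation $(\mu'_t, T'_t)$. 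Comparing $t^1$-coefficients exactly as in the proof of the preceding theorem gives $\mu'_1 = \mu_1 + \delta_{\Alg}(\psi_1) = 0$ and $T'_1 = T_1 - \Phi(\psi_1) = 0$.

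For the inductive step, assume that after finitely many such modifications the deformation (still denoted $(\mu_t, T_t)$) satisfies $\mu_i = 0 = T_i$ for $1 \leq i \leq n-1$. Extracting the $t^n$-coefficient of Eqs.~\eqref{Eq: deform eq for  products in RBA}--\eqref{Eq: Deform RB operator in RBA} and using the vanishing of the lower-order coefficients, essentially all cross terms drop out and one is left precisely with $\delta_{\Alg}(\mu_n) = 0$ and $\Phi(\mu_n) + \partial_{\RBO}(T_n) = 0$; equivalently $\partial_{\RBA}(\mu_n, T_n) = 0$. By $\rmH^2_{\RBA}(A) = 0$ there exists $\psi_n \in \Hom(A,A)$ with $\partial_{\RBA}(\psi_n, 0) = -(\mu_n, T_n)$, and conjugation by $\psi_t = \id_A + \psi_n t^n$, which is $\equiv \id_A \pmod{t^n}$, leaves the coefficients in degrees $<n$ untouched (so still zero) while annihilating the degree-$n$ terms.

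Iterating produces formal isomorphisms $\psi^{(n)}_t = \id_A + \psi_n t^n$ whose left-composite $\cdots \circ \psi^{(n)}_t \circ \cdots \circ \psi^{(1)}_t$ converges in the $t$-adic topology on $\Hom(A[[t]], A[[t]])$, because the $n$-th factor differs from the identity only modulo $t^n$. The limiting formal isomorphism intertwines $(\mu_t, T_t)$ with the trivial deformation, proving rigidity. The main technical obstacle is the bookkeeping of the inductive step: one must verify carefully that under the hypothesis $\mu_i = 0 = T_i$ for $i<n$, the $t^n$-coefficient of the $\Omega$-associativity relation and the $\Omega$-family Rota-Baxter identity collapses to precisely the abstract cocycle equation $\partial_{\RBA}(\mu_n, T_n) = 0$ used above. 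Once this is settled, the rest of the argument is a formal consequence of the cohomological hypothesis together with the routine $t$-adic convergence check.
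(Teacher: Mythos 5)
Your proposal is correct and follows essentially the same route as the paper: trivialize the infinitesimal $2$-cocycle by a formal isomorphism of the form $\id_A \pm \psi_1 t$ using $\rmH^2_{\RBA}(A)=0$, observe that the first nonvanishing coefficient of the modified deformation is again a $2$-cocycle, and iterate (your sign convention $\partial_{\rm{RBA}_\lambda}(\psi_1,0)=-(\mu_1,T_1)$ with $\psi_t=\id_A+\psi_1t$ is just the paper's choice with $\psi_1$ replaced by $-\psi_1$). Your treatment of the inductive step and of the $t$-adic convergence of the composite isomorphisms is in fact slightly more explicit than the paper's, which compresses these points into ``by induction.''
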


\begin{proof}Let $(A[[t]], \mu_t, T_t)$
	 be a $1$-parameter formal deformation.
	By Proposition~\ref{Prop: Infinitesimal is 2-cocyle},
	$(\mu_1, T_1)$
	is a $2$-cocycle. By $\rmH^2_{\RBA}(A,T)=0$, there exists a $1$-cochain
	$(\psi_{1}, 0) \in \mathcal{C}^1_\RBA(A,T)= \mathcal{C}^1_{\Alg}(A)$
	such that $ (\mu_{1}, T_{1}) =  \partial_{{\rm{RBA}_\lambda}}(\psi_1, 0) $,
	that is,
	\[\mu_{1,\,\alpha,\,\beta}=\delta_{\rm{Alg}}(\psi_1)_{\alpha,\,\beta},\, \text{ and }\, T_{1,\,\omega}=-{\Phi(\psi_1)}_\omega.\]
	
	Setting $\psi_{t} = \Id_A -\psi_{1}t$
	, we have a deformation
	$(A[[t]], \overline \mu_{t}, \lbar{T}_{t})$
	, where
	$$\overline{\mu}_{t,\,\alpha,\,\beta}=\psi^{-1}_{t,\,\alpha\beta}\circ \mu_{t,\,\alpha,\,\beta}\circ (\psi_{t,\,\alpha}\otimes \psi_{t,\,\beta})$$
	and $$\lbar{T}_{t,\,\omega}=\psi^{-1}_{t,\,\omega}\circ T_{t,\,\omega}\circ \psi_{t,\,\omega}.$$
	It can be easily verify  that $\overline{\mu}_{1,\,\alpha,\,\beta}=0$ and $\overline{T}_{1,\,\omega}=0$. Then
	$$\begin{array}{rcl} \overline{\mu}_{t,\,\alpha,\,\beta}&=& \mu_{\alpha,\,\beta}+\overline{\mu}_{2,\,\alpha,\,\beta}t^2+\dots,\\
		\lbar{T}_{t,\,\omega}&=& T_\omega+\overline{T}_{2,\,\omega}t^2+\dots.\end{array}$$
	By Eqs.~(\ref{Eq: deform eq for  products in RBA}) and (\ref{Eq: Deform RB operator in RBA}),
	We see that
	$ (\overline \mu_{2}, \lbar{T}_{2}) $
	is still a $2$-cocyle. By induction, we can show that $(A[[t]], \mu_t, T_t)$
	is equivalent to the trivial extension $(A[[t]], \mu, T)$.
	Thus, $(A, \mu, T)$
	is rigid.
\end{proof}

\smallskip
\section{Appendix: The formulas of the coboundary operator on $\mathcal{C}_{\rm RelRBA_\lambda}(A,V)$}\

In this appendix, we will describe the coboundary operator on the cochain complex of relative $\Omega$-family Rota-Baxter algebras of weight $\lambda$ clearly. Explicitly, we will give precise formulas for the operators $\delta_\pi$, $d_T$ and $h_T$ on each component of $\mathcal{C}_{\rm RelRBA_\lambda}(A,V)$, as stated in Remark~\ref{remark appendix}.

 Firstly, let's introduce some notations.
\begin{defn} Let $[n]$ be the ordered set $\{1,2,\dots,n\}$, $n\geqslant 1$.
	Define
	\begin{align*}
		P^{n}_{0} := & \left\{ \emptyset \right\}, \\
		P^{n}_{i} := & \Big\{ \text{all ordered subsets of }  [n] \mbox{\ of cardinality $i$}\Big\}, 1 \leqslant i \leqslant n,\\
		P([n]) := & \bigsqcup_{i=0}^{n} P_{i}^{n}.
	\end{align*}
\end{defn}

\begin{remark}\label{order}
	For any $ n \geqslant 1 $,the set  $ P([n]) $ is totally ordered as follows: for any $ 1 \leqslant i,j \leqslant n $, $ I = \{a_{1}, \dots, a_{i}\} \in P^{n}_{i} , J= \{b_{1}, \dots, b_{j}\} \in P^{n}_{j}$, we set $I\prec J$ if $i<j$, or $i=j$ and $I$ is smaller than $J$ with respect to the lexicographical order.
	With this order, we have
	\[ \emptyset \prec \{1\} \prec \dots \prec \{n\} \prec \{1,2\} \prec \{1,3\} \prec \dots \prec \{1,n\} \prec \{2,3\} \prec \dots \prec \{1,\dots,n\}. \]
\end{remark}

\begin{defn}
	Let $ A, V $ be two vector spaces.
	Let  $ I = \{q_{1},\dots,q_{i}\} \in P^{n}_{i} $,  $ 1 \leqslant i \leqslant n $.
	Define a subspace
	of
	$ \mathcal{A}^{i,n-i} $
	as
	\[ \mathcal{A}^{n}_{I} := V  \ot \dots \ot V \ot \underset{q_{1}}{A} \ot V \ot \dots \ot V \ot \underset{q_{i}}{A} \ot V \ot\dots \ot V. \]
	In particular, define $ \mathcal{A}^{n}_{\emptyset} := V^{\ot n} $.
\end{defn}

With respect to the order $\prec$ introduced above, the space $\Hom ((A \oplus V)^{\ot n}, A \oplus V)$ can be decomposed as
\begin{align*}
	&	\Hom ((A \oplus V)^{\ot n}, A \oplus V)\\
	= \ & \Hom ((A \oplus V)^{\ot n}, A) \oplus \Hom ((A \oplus V)^{\ot n},V) \\
	= \ & \bigoplus_{i=0}^{n} \Hom (\mathcal{A}^{i,n-i}, A) \bigoplus
	\bigoplus_{j=0}^{n} \Hom (\mathcal{A}^{j,n-j}, V) \\
	= \ & \bigoplus_{i=0}^{n} \bigoplus_{I \in P^{n}_{i}} \Hom 		(\mathcal{A}^{n}_{I}, A) \bigoplus
	\bigoplus_{j=0}^{n} \bigoplus_{I \in P^{n}_{j}} \Hom 		(\mathcal{A}^{n}_{I}, V),		
\end{align*}
then any element $f\in \Hom ((A \oplus V)^{\ot n}, A \oplus V)$ can be written as:
\begin{equation}\label{map decomposition}
	\begin{aligned}
		f
		= \ & (f^{A}_{\emptyset},f^{A}_{\{1\}},\dots,f^{A}_{\{1,\dots,n\}};f^{V}_{\emptyset},f^{V}_{\{1\}},\dots,f^{V}_{\{1,\dots,n\}}),
	\end{aligned}
\end{equation}
with $ f_{I}^{A} \in \Hom(\mathcal{A}^{n}_{I},A) $ and $ f_{I}^{V} \in \Hom(\mathcal{A}^{n}_{I},V) $, $ I \in P([n]) $. Thereafter, we will use this notation flexibly without additional explanation.

\begin{defn}
	Let $I=\{p_1,\ldots,p_i\}\in P([n]), J=\{q_1,\ldots,q_j\}\in P([m]),k\in\{1,\ldots,n\}$. Define
	\begin{align*}
		I \sideset{_k}{}{\mathop{\sqcup}} J=& \ \{p_1,\ldots, p_{t-1}, k+q_1-1,\ldots,k+q_j-1, p_{t+1}+j-1,\ldots, p_{i}+j-1\}, k=p_t, 1\leqslant t\leqslant i,\\
		I\sqcup_k J=& \
		\left\{
		\begin{array}{rcl}
			\{k+q_1-1,\ldots,k+q_{j}-1, p_1+j-1,\ldots,p_i+j-1\}, & 1\leqslant k<p_1,\\
			\{p_1,\ldots,p_t, k+q_1-1,\ldots,k+q_j-1, p_{t+1}+j-1,\ldots,p_i+j-1\}, & p_t<k<p_{t+1},  1\leqslant t\leqslant i-1,\\
			\{p_1,\ldots,p_i,k+q_1-1,\ldots,k+q_{j}-1\}, & p_i<k\leqslant n.
		\end{array}
		\right.
	\end{align*}
\end{defn}
For any $f\in\Hom((A\oplus V)^{\ot n}, A\oplus V), g\in \Hom((A\oplus V)^{\ot m}, A\oplus V)$, we have
\[[f,g]_{\Omega}\in\Hom((A\oplus V)^{\ot n+m-1}, A\oplus V).\]
For $I\in P([n+m-1])$, the projections of $[f,g]_\Omega$ on the components $\Hom(\mathcal{A}^n_I,A)$ and $\Hom(\mathcal{A}^n_I,V)$  are given as:
\begin{align*}
	{[f,g]_{\Omega}}^A_{I}=& \ \sum_{k=1}^n\sum_{I_1\sideset{_k}{}{\mathop{\sqcup}}I_2=I}(-1)^{(k-1)(m-1)}f^A_{I_1}\circ_k g^A_{I_2}
	+\sum_{k=1}^n\sum_{I_1\sqcup_k I_2=I}(-1)^{(k-1)(m-1)}f^A_{I_1}\circ_k g^V_{I_2}\\
	&-(-1)^{(n-1)(m-1)}\sum_{k=1}^m\sum_{I_1\sideset{_k}{}{\mathop{\sqcup}} I_2=I}(-1)^{(k-1)(n-1)}g^A_{I_1}\circ_k f^A_{I_2}\\
	& \ -(-1)^{(n-1)(m-1)}\sum_{k=1}^m\sum_{I_1\sqcup_k I_2=I}(-1)^{(k-1)(n-1)}g^A_{I_1}\circ_k f^V_{I_2},\\
	{[f,g]_{\Omega}}^V_{I}=& \ \sum_{k=1}^n\sum_{I_1\sideset{_k}{}{\mathop{\sqcup}}I_2=I}(-1)^{(k-1)(m-1)}f^V_{I_1}\circ_k g^A_{I_2}
	+\sum_{k=1}^n\sum_{I_1\sqcup_k I_2=I}(-1)^{(k-1)(m-1)}f^V_{I_1}\circ_k g^V_{I_2}\\
	&-(-1)^{(n-1)(m-1)}\sum_{k=1}^m\sum_{I_1\sideset{_k}{}{\mathop{\sqcup}} I_2=I}(-1)^{(k-1)(n-1)}g^V_{I_1}\circ_k f^A_{I_2}\\
	& \ -(-1)^{(n-1)(m-1)}\sum_{k=1}^m\sum_{I_1\sqcup_k I_2=I}(-1)^{(k-1)(n-1)}g^V_{I_1}\circ_k f^V_{I_2}.
\end{align*}

In particular, the restriction $[-,-]_\Omega$ on $ \frak{L}' $ is given as: for any
\begin{align*}
	f = \ &(0,\ldots,0,f^A_{\{1,\ldots,n\}}; f^V_\emptyset, f^V_{\{1\}},\ldots,f^V_{\{2,\ldots,n\}}, 0) \in\frak{L}'(n),\\
	g = \ &(0,\ldots,0,g^A_{\{1,\ldots,m\}}; g^V_\emptyset, g^V_{\{1\}},\ldots,g^V_{\{2,\ldots,n\}}, 0) \in\frak{L}'(m),
\end{align*}
we have
\begin{align} \label{special f,g,G}
	[f,g]_\Omega
	= \ \Big(0,\ldots,0,{[f,g]_{\Omega}}^A_{\{1,\ldots,n+m-1\}}, {[f,g]_{\Omega}}^V_{\emptyset}, {[f,g]_{\Omega}}^V_{\{1\}},\ldots,
	{[f,g]_{\Omega}}^V_{\{2,\ldots,n+m-1\}},0\Big) \in\frak{L}'(n+m-1) ,
\end{align}
where
\begin{align*}
	{[f,g]_{\Omega}}^A_{\{1,\ldots,n+m-1\}}
	=& \ \sum_{i=1}^n(-1)^{(i-1)(m-1)}f^A_{\{1,\ldots,n\}}\circ_ig^A_{\{1,\ldots,m\}}
	-(-1)^{(n-1)(m-1)}\sum_{i=1}^m(-1)^{(i-1)(n-1)}g^A_{\{1,\ldots,m\}}\circ_if^A_{\{1,\ldots,n\}}\\
	=& \ [f^A_{\{1,\ldots,n\}}, g^A_{\{1,\ldots,m\}}]_{\Omega},\\
	{[f,g]_{\Omega}}^V_{\emptyset}
	=& \ \sum_{i=1}^n(-1)^{(i-1)(m-1)}f^V_\emptyset\circ_ig^V_\emptyset
	-(-1)^{(n-1)(m-1)}\sum_{i=1}^m(-1)^{(i-1)(n-1)}g^V_{\emptyset}\circ_if^V_{\emptyset}\\
	=& \ [f^V_\emptyset,g^V_\emptyset]_{\Omega},
\end{align*}
and
\begin{align*}
	{[f,g]_{\Omega}}^V_{I}=& \ \sum_{k=1}^n\sum_{\substack{I_2=\{1,\ldots,m\}\\   I_1\sideset{_k}{}{\mathop{\sqcup}}I_2=I}}              (-1)^{(k-1)(m-1)}f^V_{I_1}\circ_kg^A_{I_2}\\
	&+\sum_{k=1}^n\sum_{\substack{I_2\neq\{1,\ldots,m\}\\   I_1\sqcup_k I_2=I}}              (-1)^{(k-1)(m-1)}f^V_{I_1}\circ_kg^V_{I_2}\\
	&-(-1)^{(n-1)(m-1)}\sum_{k=1}^m\sum_{\substack{I_2=\{1,\ldots,n\}\\   I_1\sideset{_k}{}{\mathop{\sqcup}}I_2=I}}             (-1)^{(k-1)(n-1)}g^V_{I_1}\circ_kf^A_{I_2}\\
	&-(-1)^{(n-1)(m-1)}\sum_{k=1}^m\sum_{\substack{I_2\neq\{1,\ldots,n\}\\   I_1\sqcup_kI_2=I}}             (-1)^{(k-1)(n-1)}g^V_{I_1}\circ_kf^V_{I_2}.
\end{align*}
for $I \in P([n])\backslash\{\emptyset,\{1,\ldots,n\}\}$.
With above notations, we can give a clear description of the coboundary operators $\delta_\pi, d_T$ and the cochain map $h_T$ clearly on each component of $\mathcal{C}^\bullet_{\rm RelRBA_\lambda}(A,V)$.
We write $$ \pi = (\pi_{\emptyset}^{A},\pi_{\{1\}}^{A},\pi_{\{2\}}^{A},\pi_{\{1,2\}}^{A};\pi_{\emptyset}^{V},\pi_{\{1\}}^{V},\pi_{\{2\}}^{V},\pi_{\{1,2\}}^{V}) = (0,0,0,\mu;\mu_{V},l,r,0)  \in \mathcal{C}_{\rm AssAct}^2(A, V).$$
For any $ f = (0,\ldots,0,f^A_{\{1,\ldots,n\}}; f^V_\emptyset, f^V_{\{1\}},\ldots,f^V_{\{2,\ldots,n\}}, 0)  \in \mathcal{C}_{\rm AssAct}^n(A,V) $, $ \theta \in  \mathcal{C}_{\rm RelRBO_\lambda}^{n-1}(T)$.
By Eq.~\eqref{special f,g,G}, we have
\begin{equation}\label{relative delta}
	\begin{aligned}
		\delta_\pi(f)=& \ [\pi, f]_{\Omega} \\
		=& \ \Big(0,\ldots,0,\delta_\pi(f)^A_{\{1,\ldots,n+1\}}; \delta_\pi(f)^V_\emptyset,\ldots,\delta_\pi(f)^V_{\{2,\ldots,n+1\}}, 0\Big) \in \mathcal{C}_{\rm AssAct}^{n+1}(A,V) ,
	\end{aligned}
\end{equation}
where
\begin{align*}
	\delta_\pi(f)^A_{\{1,\ldots,n+1\}}
	=& \ \mu\circ(f^A_{\{1,\ldots,n\}}\ot\id)+(-1)^{n-1}\mu\circ(\id\ot f^A_{\{1,\ldots,n\}})
	+\sum_{i=1}^n(-1)^{n-i+1}f^A_{\{1,\ldots,n\}}\circ_i \mu \\
	=& \ [\mu,f^A_{\{1,\ldots,n\}}]_{\Omega},\\
	\delta_\pi(f)^V_\emptyset
	=& \ \mu_V\circ(f^V_\emptyset\ot\id)+(-1)^{n-1}\mu_V\circ(\id\ot f^V_\emptyset)
	+\sum_{i=1}^n(-1)^{n-i+1}f^V_\emptyset\circ_i\mu_V \\
	=& \ [\mu_V, f^V_\emptyset]_{\Omega};
\end{align*}

\begin{align*}
	\delta_\pi(f)^V_{\{q\}}=& \ \delta^q_1(-1)^{n-1}l\circ(\id\ot f^V_\emptyset)+(-1)^{n-q+1}f^V_\emptyset\circ_q l
	+\delta^q_{n+1}r\circ(f^V_\emptyset\ot\id)+(-1)^{n-(q-1)+1}f^V_\emptyset\circ_{q-1}r\\
	&+(1-\delta^q_{n+1})\mu_V\circ(f^V_{\{q\}}\ot\id)+(1-\delta^q_1)(-1)^{n-1}\mu_V\circ(\id\ot f^V_{\{q-1\}})
	+\sum_{i=1}^{q-2}(-1)^{n-i+1}f^V_{\{q-1\}}\circ_i\mu_V\\
	& \ +\sum_{i=q+1}^n(-1)^{n-i+1}f^V_{\{q\}}\circ_i\mu_V
\end{align*}
for $\{q\} \in P^{n+1}_{1}$;
\begin{align*}
	\delta_\pi(f)^V_{\{q_1,\ldots,q_k\}}=& \ \sum_{j=1}^{k-1}\delta^{q_{j+1}}_{q_j+1}(-1)^{n-q_i+1}f^V_{\{q_1,\ldots,q_j,q_{j+2}-1,\ldots, q_k-1\}}\circ_{q_j}\mu
	+\delta^{q_1}_1(-1)^{n-1}l\circ(\id\ot f^V_{\{q_2-1,\ldots,q_k-1\}})\\
	&+\sum_{j=1}^{k}(1-\delta^{q_{j+1}}_{q_j+1})(-1)^{n-q_j+1}f^V_{\{q_1,\ldots,q_{j-1},
		q_{j+1}-1,\ldots, q_k-1\}}\circ_{q_j}l
	+\delta^{q_k}_{n+1}r\circ(f^V_{\{q_1,\ldots,q_{k-1}\}}\ot\id)\\
	&+\sum_{j=1}^{k}(1-\delta^{q_j-1}_{q_{j-1}})(-1)^{n-(q_j-1)+1}f^V_{\{q_1,\ldots,q_{j-1},
		q_{j+1}-1,\ldots, q_k-1\}}\circ_{q_j -1}r
	+(1-\delta^{q_k}_{n+1})\mu_V\circ(f^V_{\{q_1,\ldots,q_k\}}\ot\id)\\
	&+(1-\delta^{q_1}_1)(-1)^{n-1}\mu_V\circ(\id\ot f^V_{\{q_1-1,\ldots,q_k-1\}})
	+\sum_{i=1}^{q_1-2}(-1)^{n-i+1}f^V_{\{q_1-1,\ldots,q_k-1\}}\circ_i\mu_V\\
	&+\sum_{j=1}^{k-1}\sum_{i=q_j+1}^{q_{j+1}-2}(-1)^{n-i+1}f^V_{\{q_1,\ldots,q_j,q_{j+1}-1,\ldots, q_k-1\}}\circ_i\mu_V
	+\sum_{i=q_k+1}(-1)^{n-i+1}f^V_{\{q_1,\ldots, q_k\}}\circ_i\mu_V
\end{align*}
for $2\leqslant k\leqslant n-1, {\{q_1,\ldots,q_k\}} \in P^{n+1}_{k}$;
and 
\begin{align*}
	\delta_\pi(f)^V_{\{1,\ldots,\hat{q},\ldots,n+1\}}
	=& \ \sum_{i=1}^{q-2}(-1)^{n-i+1}f^V_{\{1,\ldots,\widehat{q-1},\ldots,n\}}\circ_{i}\mu
	+\sum_{i=q+1}^n(-1)^{n-i+1}f^V_{\{1,\ldots,\widehat{q},\ldots,n\}}\circ_{i}\mu\\
	&+(1-\delta^q_1)(-1)^{n-1}l\circ(\id\ot f^V_{\{1,\ldots,\widehat{q-1},\ldots,n\}})
	+(-1)^{n-{(q-1)}+1} f^V_{\{1,\ldots,\widehat{q-1},\ldots,n\}}\circ_{q-1}l\\
	&+(1-\delta^q_{n+1})r\circ(f^V_{\{1,\ldots,\widehat{q},\ldots,n\}}\ot\id)
	+(-1)^{n-q+1} f^V_{\{1,\ldots,\widehat{q},\ldots,n\}}\circ_{q}r\\
	&+\delta^q_{n+1}l\circ(f^A_{\{1,\ldots,n\}}\ot\id)
	+\delta^q_1(-1)^{n-1}r\circ(\id\ot f^A_{\{1,\ldots,n\}}).
\end{align*}
and for $ 1 \leqslant q \leqslant n+1, \{1,\ldots,\hat{q},\ldots,n+1\} \in P^{n+1}_{n} $,
where $\delta^i_j$ is the Kronecker symbol.
Also, we have

\begin{equation}\label{relative partial}
	\begin{aligned}
		d_T(\theta)=& \ \lambda[\mu_V, \theta]_{\Omega}+\big[[\pi,T]_{\Omega}, \theta\big]_{\Omega}\\
		=& \ (-1)^{n}\left(\mu\circ(T\ot\theta)-T\circ r\circ(\id\ot\theta)\right)+\sum_{i=1}^{n-1}(-1)^{n-i}\theta
		\circ_{i} (l\circ(T\ot\id)+r\circ(\id\ot T)+\lambda\mu_V)\\
		&+\mu\circ(\theta\ot T)-T\circ l \circ(\theta\ot\id),
	\end{aligned}
\end{equation}
\begin{equation}\label{relative chain map}
	\begin{aligned}
		h_T(f)
		=& \ \sum_{k=1}^n\frac{1}{k!}\lambda^{n-k}P\big[\ldots \underbrace{[f,T]_{\Omega},\ldots, T}_k\big]_{\Omega}\\
		=& \ f^{A}_{\{1,\dots,n\}}\circ(\underbrace{T\ot\ldots\ot T}_n)\\
		& \ -\sum_{i=0}^{n-1}
		\sum_{I = \{q_{1},\dots,q_{i}\} \in P([n])}\lambda^{n-i-1}T\circ
		f^{V}_{I}
		\circ
		(\id \ot \dots \ot \id \ot \underset{q_{1}} {T} \ot \id \ot \dots \ot \id  \ot \underset{q_{i}} {T} \ot \id \ot \dots \ot \id).
	\end{aligned}
\end{equation}

\smallskip
\noindent
{{\bf Acknowledgments.} This work is supported in part by Natural Science Foundation of China (Grant No. 12101183,  12071137, 11971460) and  Science and Technology Commission of Shanghai Municipality (No. 22DZ2229014). The project is funded by China
Postdoctoral Science Foundation (Grant No. 2021M690049).

\end{document}